\documentclass{amsart}
\usepackage{amssymb,amsmath,latexsym,mathrsfs}
\usepackage{times}

\usepackage[dvips]{graphicx}

\numberwithin{equation}{section}

\newtheorem{thm}{Theorem}[section]
\newtheorem{dfn}[thm]{Definition}
\newtheorem{cor}[thm]{Corollary}

\newtheorem{prop}[thm]{Proposition}
\newtheorem{lemma}[thm]{Lemma}

\newcommand{\join}{\lor}
\newcommand{\meet}{\land}
\newcommand{\del}{\backslash}
\newcommand{\cl}{\mathrm{cl} }

\newcommand{\mcA}{\mathcal{A}}
\newcommand{\mcB}{\mathcal{B}}
\newcommand{\mcC}{\mathcal{C}}
\newcommand{\mcZ}{\mathcal{Z}}
\newcommand{\mcI}{\mathcal{I}}

\newcommand{\mcF}{\mathcal{F}}
\newcommand{\mcR}{\mathcal{R}}
\newcommand{\frp}{\mathbin{\Box}}

\title{Semidirect sums of matroids}

\author[J.~Bonin]{Joseph E.~Bonin} \address[J.~Bonin]
{Department of Mathematics\\ The George Washington University\\
  Washington, D.C. 20052, USA} \email[J.~Bonin] {jbonin@gwu.edu}
\author[J.P.S.~Kung] {Joseph P.S.~Kung} \address[J.P.S.~Kung]
{Department of Mathematics\\
  University of North Texas\\
  Denton, TX 76203, USA} \email[J.P.S.~Kung] {kung@unt.edu}
\date{\today}

\subjclass{Primary 05B35;
Secondary 05B20, 05C35, 05D99, 06C10, 51M04}

\begin{document}

\begin{abstract}
  For matroids $M$ and $N$ on disjoint sets $S$ and $T$, a semidirect
  sum of $M$ and $N$ is a matroid $K$ on $S\cup T$ that, like the
  direct sum and the free product, has the restriction $K|S$ equal to
  $M$ and the contraction $K/S$ equal to $N$.  We abstract a matrix
  construction to get a general matroid construction: the matroid
  union of any rank-preserving extension of $M$ on the set $S\cup T$
  with the direct sum of $N$ and the rank-$0$ matroid on $S$ is a
  semidirect sum of $M$ and $N$.  We study principal sums in depth;
  these are such matroid unions where the extension of $M$ has each
  element of $T$ added either as a loop or freely on a fixed flat of
  $M$.  A second construction of semidirect sums, defined by a Higgs
  lift, also specializes to principal sums.  We also explore what can
  be deduced if $M$ and $N$, or certain of their semidirect sums, are
  transversal or fundamental transversal matroids.
\end{abstract}
\keywords{Semidirect sum, matroid union, Higgs lift, principal sum,
  transversal matroid}
\maketitle

\markboth{J.~Bonin and J.P.S.~Kung, \emph{Semidirect sums of
    matroids}}{J.~Bonin and J.P.S.~Kung, \emph{Semidirect sums of
    matroids}}
  
\section{Block upper-triangular matrices and semidirect
  sums}\label{sec:matrix} 

A simple way to combine two matrices $A$ and $B$ over a field
$\mathbb{F}$ is to have them be blocks of a block-diagonal matrix.  A
richer collection of matrices results by putting a third matrix $U$
over $\mathbb{F}$ in the upper right corner to obtain a block
upper-triangular matrix $(A,B;U)$, where
\begin{equation*}\label{eqn:matrixform}
  (A,B;U) = 
  \left(
    \begin{array}{cc}
      A \quad  &   U  \\
      0 \quad  &    B
    \end{array}
  \right).
\end{equation*}
In this paper, we consider the problem of extending this construction
to matroids.

The matroid represented by a block-diagonal matrix $(A,B;0)$ is a
direct sum.  Let $S$ and $T$ be disjoint sets, $M$ a matroid on $S$,
and $N$ a matroid on $T$.  The \emph{direct sum} $M \oplus N$ of $M$
and $N$ is the matroid on the union $S \cup T$ with rank function
given by
\[
r_{M \oplus N}(X) = r_M (X \cap S) +r_N (X \cap T)
\]
for $X \subseteq S \cup T$.  There is an equivalent way to define the
direct sum by restrictions and contractions: a matroid $K$ on $S \cup
T$ is the direct sum $M \oplus N$ if and only if
\begin{equation}\label{eqn:directsum}
K | S = K/T = M \quad \mathrm{and} \quad K |T = K /S = N.
\end{equation} 
Semidirect sums are defined by relaxing condition
(\ref{eqn:directsum}).

\begin{dfn}\label{dfn:SDP}
  Let $M$ and $N$ be matroids on the disjoint sets $S$ and $T$.  A
  matroid $K$ on the union $S \cup T$ is a \emph{semidirect sum} of
  $M$ and $N$ if the restriction $K|S$ is $M$ and the contraction
  $K/S$ is $N$.
\end{dfn}

Except in the special case of direct sums, the order of $M$ and $N$ in
a semidirect sum is important.  Semidirect sums are much more general
than direct sums.  In particular, if $K$ is a matroid on $E$ and
$\emptyset \subsetneq S \subsetneq E$, then $K$ is a semidirect sum of
the restriction $K|S$ and the contraction $K/S;$ thus, every matroid
with more than one element is a nontrivial semidirect sum.  Also, if
$r(N)=0$, then the semidirect sums of $M$ and $N$ are precisely the
extensions of $M$ to $S\cup T$ that have rank $r(M)$.  

We shall now show that Definition \ref{dfn:SDP} specializes to block
upper-triangular matrices in the case of matroids that are
representable over a given field.  Let $D[R|E]$ be a matrix with rows
indexed by $R$ and columns indexed by $E$.  The \emph{column matroid}
$K$ of $D$ is the matroid on $E$ defined by linear dependence of the
column vectors.  The column matroid remains unchanged under
nonsingular row operations on $D$.

It is well known that for $E_1 \subseteq E$, a matrix representation
of the contraction $K/E_1$ can be obtained as follows.  By permuting
the columns, we may assume that those in $E_1$ are left-most.  Let
$E_2$ be the set difference $E-E_1$.  Fix a basis $I$ of $K|E_1$.  Use
nonsingular row operations to transform $D$ into $D'$ so that the
submatrix $D'[R|I]$ consists of an $|I| \times |I|$ non-singular
matrix placed on top of a zero matrix.  Let $R_1$ be the first $|I|$
rows and let $R_2$ be the rest. The columns in $E_1$ are linearly
dependent on those in $I$, so $D'[R_2|E_1]$ is a zero matrix.  Thus,
\[
D'= \left(
\begin{array}{cc}
  D'[R_1|E_1] \quad  &   U  \\
  0 \quad  &    D'[R_2|E_2]
\end{array}
\right) \] for some matrix $U$.  The contraction $K/E_1$ is the column
matroid of the lower diagonal block $D'[R_2|E_2]$.
It follows that if the matrix rank of $A$ is its number of rows, then
the column matroid of the lower block $B$ in $(A,B;U)$ is the
contraction of the column matroid of $(A,B;U)$ by $S$.  These remarks
give the following result.

\begin{prop}\label{prop:representablecase} 
  Let $A$ and $B$ be matrices over a field $\mathbb{F}$, with disjoint
  sets $S$ and $T$ indexing the columns, with column matroids $M$ and
  $N$, and where $A$ has $r(M)$ rows.  The column matroid of any
  matrix $(A,B;U)$ over $\mathbb{F}$ is a semidirect sum of $M$ and
  $N$.  Conversely, if a semidirect sum $K$ of $M$ and $N$ can be
  represented as the column matroid of a matrix $D$ over $\mathbb{F}$,
  then $D$ can be transformed by nonsingular row operations into a
  matrix $(A,B;U)$ such that the column matroid of $A$ is $M$ and that
  of $B$ is $N$.
\end{prop}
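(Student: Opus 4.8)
The plan is to deduce both halves of the proposition directly from the two observations recorded just before its statement: first, that placing a block of zero rows beneath a matrix leaves its column matroid unchanged; and second, that in a block upper-triangular matrix $(A,B;U)$ whose top-left block $A$ has matrix rank equal to its number of rows, the column matroid of the lower block $B$ is the contraction by $S$ of the column matroid of $(A,B;U)$.

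For the forward direction I would argue as follows. Let $K$ be the column matroid of $(A,B;U)$. The submatrix on the columns indexed by $S$ is $A$ with a block of zero rows placed beneath it, so $K|S$ is the column matroid of $A$, which is $M$. Since the column matroid of $A$ is $M$, the matrix rank of $A$ is $r(M)$, which by hypothesis equals the number of rows of $A$; hence the second observation applies and gives that $K/S$ is the column matroid of $B$, namely $N$. Thus $K$ is a semidirect sum of $M$ and $N$.

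For the converse I would run the contraction recipe described before the proposition with $E_1 = S$ and $E_2 = T$: permute the columns of $D$ so that those in $S$ come first, fix a basis $I$ of $K|S = M$, and apply nonsingular row operations to reach a matrix $D'$ in which $D'[R|I]$ is an $|I| \times |I|$ nonsingular matrix sitting on top of a zero matrix, with $R_1$ the first $|I|$ rows and $R_2$ the remaining rows. Since every column indexed by $S$ is a linear combination of the columns in $I$, restricting such a relation to the rows in $R_2$ shows that $D'[R_2|S]$ is a zero matrix, so $D'$ has the block form $(A,B;U)$ with $A = D'[R_1|S]$, $B = D'[R_2|T]$, and $U = D'[R_1|T]$. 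Then the column matroid of $A$ equals the column matroid of $D'[R|S]$ (again by the first observation), which is $K|S = M$; and the column matroid of $B = D'[R_2|E_2]$ is, by the recipe, the contraction $K/S = N$, as required.

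I do not expect a genuine obstacle here, since the essential content has already been developed in the paragraphs preceding the statement, and the proof is mostly a matter of bookkeeping. The one point I would be careful about is recognizing that the hypothesis ``$A$ has $r(M)$ rows'' is precisely the full-row-rank condition needed to invoke the contraction observation in the forward direction, and, in the converse, confirming that the top-left block $A$ of the reduced matrix has column matroid exactly $M$ rather than merely a matroid of the same rank --- this is where the observation on zero rows is used a second time.
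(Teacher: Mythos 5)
Your proposal is correct and follows exactly the route the paper intends: the paper gives no separate proof, stating that the preceding remarks (zero rows do not affect the column matroid, and the contraction recipe via row reduction over a basis of $K|S$) immediately yield the proposition, and your write-up is precisely a careful elaboration of those remarks, including the key point that ``$A$ has $r(M)$ rows'' is the full-row-rank condition needed for the contraction observation.
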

  
The objective of this paper is to study constructions of semidirect
sums of matroids that use only the matroid structure.  In Section
\ref{sec:Union}, we discuss matroid unions and then use this operation
to construct semidirect sums.  A principal sum of $M$ and $N$ is the
matroid union of an extension of $N$ by loops with an extension of $M$
in which each element of $N$ is added either as a loop or freely on a
fixed flat of $M$; Section \ref{sec:princsums} contains a detailed
analysis of these special semidirect sums.  In Section
\ref{sec:othercon}, we treat a construction of semidirect sums that is
defined using a Higgs lift and that gives another approach to
principal sums.  In the final section, we show that if a semidirect
sum of $M$ and $N$ given by the matroid union construction is
transversal, then $M$ and $N$ are also transversal; the counterpart
holds for fundamental transversal matroids and, with additional
hypotheses, the converses also hold.

We assume a working knowledge of matroid theory as described in
\cite{oxley}.  We also use more specialized results from the theory of
matroid unions, quotients, the weak order, Higgs lifts, and
transversal matroids; these results will be briefly summarized where
they are needed and the reader is referred to \cite{Brylawski, Strong,
  Weak, oxley, Welsh} for detailed accounts.  We reserve calligraphic
fonts for collections of sets, such as the collections $\mcI(M)$,
$\mcC(M)$, and $\mcF(M)$ of independent sets, circuits, and flats of a
matroid $M$.  We abbreviate a single-element set $\{a\}$ by $a$.  We
use $U_{r,E}$ when we want to specify the set on which the rank-$r$
uniform matroid $U_{r,n}$ is defined.

We close this introductory section with two basic observations, the
second of which follows from the fact that contraction and deletion
are dual operations.

\begin{prop}\label{prop:rkdl}
  If $K$ is a semidirect sum of the matroids $M$ and $N$, then
  \begin{enumerate}
  \item $r(K) = r(M)+r(N)$ and
  \item the dual $K^*$ is a semidirect sum of the duals, $N^*$ and
    $M^*$.
  \end{enumerate}
\end{prop}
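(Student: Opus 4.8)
The plan is to prove the two parts separately, using only the defining properties of a semidirect sum: $K|S = M$ and $K/S = N$.

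For part (1), I would compute $r(K)$ by choosing a basis of $S$ in $K$ and extending it. Since $K|S = M$, we have $r_K(S) = r(M)$. Now pick any basis $I$ of $K$. Because contraction by $S$ leaves the ground set $T$ with rank function $r_{K/S}(X) = r_K(X\cup S) - r_K(S)$, and $K/S = N$, we get $r(N) = r_K(S\cup T) - r_K(S) = r(K) - r(M)$, which rearranges to $r(K) = r(M) + r(N)$. This is essentially a one-line argument from the definition of contraction; no real obstacle here.

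For part (2), I would use the standard facts that duality interchanges restriction and contraction: $(K|S)^* = K^*/S$ and $(K/S)^* = K^*|S$ — more precisely, for a matroid $K$ on $E$ and $S\subseteq E$, one has $K^*\del (E\del S) = K^*|S = (K/S)^*$ and $K^*/(E\del S) = (K|S)^*$; equivalently, $K^*|S = (K/(T))^*$ is not quite what we want, so I must be careful with which subset plays which role. Here $K$ is on $S\cup T$, and I want to show $K^*$ is a semidirect sum of $N^*$ and $M^*$, meaning $K^*|T = N^*$ and $K^*/T = M^*$. The key identities are: $K^*|T = (K/S)^*$ (the dual of the contraction to the complementary set equals the restriction of the dual) and $K^*/T = (K|S)^*$ (the dual of the restriction equals the contraction of the dual to the complement). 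Substituting $K/S = N$ and $K|S = M$ gives $K^*|T = N^*$ and $K^*/T = M^*$, as required. The order reversal from $(M,N)$ to $(N^*,M^*)$ is exactly the swap of $S$ and $T$ forced by these duality identities.

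The only point needing care — and the closest thing to an obstacle — is getting the two duality identities stated in the right direction; I would cite them from \cite{oxley} (they are Propositions 3.1.25 and the surrounding discussion on duals of minors there), namely that for $X\subseteq E$, $(K/X)^* = K^*\del X$ and $(K\del X)^* = K^*/X$. Applying the first with $X = S$ and $E\del X = T$ gives $(K/S)^* = K^*\del S = K^*|T$; applying the second with $X = T$ gives $(K\del T)^* = (K|S)^* = K^*/T$. That completes both parts.
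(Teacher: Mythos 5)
Your proof is correct and follows exactly the route the paper intends: part (1) is the contraction rank identity $r(N)=r_{K/S}(T)=r(K)-r(M)$, and part (2) is the standard duality $(K/S)^*=K^*|T$, $(K|S)^*=K^*/T$, which is precisely the ``contraction and deletion are dual operations'' remark the paper offers in place of a written proof. No gaps; the brief mid-paragraph hesitation about which identity points which way is resolved correctly in your final application.
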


\section{Semidirect sums from matroid unions}\label{sec:Union}  

Nash-Williams introduced matroid unions in \cite{NW}.  We begin by
recalling some basic facts about this operation; further information
can be found in \cite{oxley,Welsh}.

Let $G$ and $H$ be matroids, both on the set $E$.  The \emph{matroid
  union} $G \vee H$ is the matroid on $E$ whose collection of
independent sets is
$$\mcI(G\join H) = \{I_G\cup I_H\,:\,I_G\in\mcI(G) \text{ and }
I_H\in\mcI(H)\}.$$ In words, a set is independent in $G \vee H$ if and
only if it is a union of a $G$-independent set and an $H$-independent
set.  Clearly $G\join H = H\join G$.  Matroid unions can also be
defined by the rank function: for $X \subseteq E$,
\begin{equation}\label{eqn:rankunion}
  r_{G \vee H} (X) = \min_{W \subseteq X} \,\,
  \{r_G(W) + r_H(W) + |X - W|\}.   
\end{equation} 

A \emph{quotient} of a matroid $L$ on a set $E$ is a matroid $Q$ on
the same set such that for all subsets $F \subseteq E$, the closure
$\cl_L(F)$ of $F$ in $L$ is contained in its closure, $\cl_Q(F)$, in
$Q$.  When $Q$ is a quotient of $L$, we also say that $L$ is a
\emph{lift} of $Q$.  If $L$ is the column matroid of $\mathrm{Mat}(L)$
and if a row is removed from $\mathrm{Mat}(L)$, then the column
matroid of the smaller matrix is a quotient of $L$.  We will use two
properties of quotients: if $Q$ is a quotient of $L$, then
$\mcF(Q)\subseteq \mcF(L)$ (that is, every $Q$-flat is an $L$-flat)
and every $L$-circuit is a union of $Q$-circuits. (See, for example,
\cite[Proposition 8.1.6c]{Strong}.)

\begin{lemma}\label{lem:quotient} 
  The matroids $G$ and $H$, both on the set $E$, are quotients of
  their matroid union $G \vee H$.
\end{lemma}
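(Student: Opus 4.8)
The plan is to verify the definition of quotient directly from the rank formula~(\ref{eqn:rankunion}) for the matroid union. It suffices to show that $\cl_{G\join H}(F)\subseteq\cl_G(F)$ for every $F\subseteq E$, since this is exactly the statement that $G$ is a quotient of $G\join H$; and because $G\join H=H\join G$, running the same argument with the roles of $G$ and $H$ interchanged then shows that $H$ is a quotient of $G\join H$ as well. So the whole lemma comes down to this one closure inclusion.

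To prove it, I would fix $F\subseteq E$ and an element $e\in\cl_{G\join H}(F)$, and show $e\in\cl_G(F)$. The case $e\in F$ is immediate, so assume $e\notin F$ and suppose, for contradiction, that $e\notin\cl_G(F)$. The point to record at the outset is that $e\notin\cl_G(F)$ forces $e\notin\cl_G(W)$, hence $r_G(W\cup e)=r_G(W)+1$, for \emph{every} $W\subseteq F$, because $\cl_G(W)\subseteq\cl_G(F)$. The goal is then to derive $r_{G\join H}(F\cup e)>r_{G\join H}(F)$, which contradicts $e\in\cl_{G\join H}(F)$.

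For this I would bound from below each term $r_G(W)+r_H(W)+|(F\cup e)-W|$ in the minimum~(\ref{eqn:rankunion}) for $r_{G\join H}(F\cup e)$, splitting on whether $e\in W$. If $e\notin W$, then $W\subseteq F$ and $|(F\cup e)-W|=|F-W|+1$, so the term equals $r_G(W)+r_H(W)+|F-W|+1$, which is at least $r_{G\join H}(F)+1$ by~(\ref{eqn:rankunion}) applied to $F$. If $e\in W$, write $W=W'\cup e$ with $W'\subseteq F$; then $r_G(W)=r_G(W')+1$ (here is where $e\notin\cl_G(F)$ enters), $r_H(W)\ge r_H(W')$, and $|(F\cup e)-W|=|F-W'|$, so the term is at least $r_G(W')+r_H(W')+|F-W'|+1\ge r_{G\join H}(F)+1$, again by~(\ref{eqn:rankunion}). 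Minimizing over all $W\subseteq F\cup e$ gives $r_{G\join H}(F\cup e)\ge r_{G\join H}(F)+1$, the contradiction sought.

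I do not anticipate a real obstacle; the argument is essentially bookkeeping with~(\ref{eqn:rankunion}). The one subtlety is that the hypothesis $e\notin\cl_G(F)$ must be used in the strengthened form ``$e\notin\cl_G(W)$ for all $W\subseteq F$'' so that it applies to the sets $W$ that occur in the minimum, not merely to $F$ itself; once that is noted, both cases close uniformly. An alternative route would be to show instead that every flat of $G$ is a flat of $G\join H$ (using $\mcI(G)\subseteq\mcI(G\join H)$), but this does not seem shorter, and only one direction of the flat characterization of quotients is quoted above, so I would prefer the rank-function argument.
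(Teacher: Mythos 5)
Your proof is correct, but it takes a different route from the paper's. The paper works directly from the independent-set definition of $G\join H$: given $a\in\cl_{G\join H}(F)-F$, it picks a $(G\join H)$-basis $I$ of $F$, decomposes $I=I_G\cup I_H$ with $I_G\in\mcI(G)$, $I_H\in\mcI(H)$, and observes that if $I_G\cup a$ were $G$-independent then $I\cup a$ would be $(G\join H)$-independent; hence $I_G\cup a\notin\mcI(G)$ and $a\in\cl_G(F)$. You instead take the rank formula~(\ref{eqn:rankunion}) as the starting point and show, by bounding every term of the minimum for $r_{G\join H}(F\cup e)$ (splitting on whether $e\in W$ and using the strengthened hypothesis $e\notin\cl_G(W)$ for all $W\subseteq F$), that $r_{G\join H}(F\cup e)\geq r_{G\join H}(F)+1$, contradicting $e\in\cl_{G\join H}(F)$. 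Both arguments are complete; your case analysis is sound, and the observation that the hypothesis must be applied to every $W\subseteq F$, not just to $F$, is exactly the point that makes the computation close. The trade-off is that the paper's argument is shorter and needs only the definition of $\mcI(G\join H)$, whereas yours leans on the rank formula~(\ref{eqn:rankunion}) — a nontrivial fact (the matroid union rank theorem) that the paper quotes but does not prove — in exchange for a purely mechanical verification that avoids choosing a basis or decomposing an independent set.
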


\begin{proof}  
  We show that $\cl_{G \vee H}(F) \subseteq \cl_G(F)$ for all subsets
  $F \subseteq E$.  Suppose that $a \not\in F$ but $a \in \cl_{G \vee
    H}(F)$.  Thus, there is a $(G \vee H)$-independent set $I$ such
  that $I \subseteq F$ and $I \cup a$ is $(G \vee H)$-dependent.  Now
  $I = I_G \cup I_H$ for some sets $I_G\in\mcI(G)$ and
  $I_H\in\mcI(H)$.  If $I_G \cup a$ were $G$-independent, then taking
  its union with $I_H$ would show that $I \cup a \in \mcI( G \vee H)$.
  This contradiction gives $I_G \cup a\not\in\mcI(G)$, so $a \in
  \cl_G(F)$.
\end{proof}  

Since intersections of flats are flats, we have the following
consequence of Lemma \ref{lem:quotient}.

\begin{cor}\label{cor:flats}
  If $U$ is a $G$-flat and $V$ is an $H$-flat, then $U$, $V$, and $U
  \cap V$ are $(G \vee H)$-flats.
\end{cor}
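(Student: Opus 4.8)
The plan is to prove Corollary~\ref{cor:flats} by exhibiting each of $U$, $V$, and $U \cap V$ as an intersection of $(G \vee H)$-flats, and then invoking the fact that intersections of flats in any matroid are again flats.

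First I would observe that Lemma~\ref{lem:quotient} asserts that both $G$ and $H$ are quotients of $G \vee H$; by the first of the two quotient properties recalled just before the lemma, namely that $\mcF(Q) \subseteq \mcF(L)$ whenever $Q$ is a quotient of $L$, every $G$-flat and every $H$-flat is automatically a $(G \vee H)$-flat. In particular, the $G$-flat $U$ is a $(G \vee H)$-flat and the $H$-flat $V$ is a $(G \vee H)$-flat, which disposes of two of the three cases directly.

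For the third case, I would use the standard fact that the intersection of any two flats of a matroid is again a flat of that matroid: if $\cl_L(F_1) = F_1$ and $\cl_L(F_2) = F_2$, then $\cl_L(F_1 \cap F_2) \subseteq \cl_L(F_1) \cap \cl_L(F_2) = F_1 \cap F_2$, and the reverse inclusion is the extensivity of closure. Applying this with $L = G \vee H$, $F_1 = U$, and $F_2 = V$, and using that $U$ and $V$ are both $(G \vee H)$-flats by the previous paragraph, we conclude that $U \cap V$ is a $(G \vee H)$-flat as well.

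There is essentially no obstacle here: the corollary is a two-line deduction once Lemma~\ref{lem:quotient} and the cited quotient property $\mcF(Q) \subseteq \mcF(L)$ are in hand, with the only remaining ingredient being the elementary closure-monotonicity argument that intersections of flats are flats — which the sentence introducing the corollary already flags as the operative fact. The main thing to be careful about is simply to state the logical flow cleanly, so that it is transparent that we are not claiming $U \cap V$ is a $G$-flat or an $H$-flat (which need not hold), only that it is a flat of the union.
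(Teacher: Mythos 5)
Your proof is correct and follows exactly the route the paper intends: Lemma~\ref{lem:quotient} plus the quotient property $\mcF(Q)\subseteq\mcF(G\vee H)$ makes $U$ and $V$ flats of $G\vee H$, and the standard fact that intersections of flats are flats handles $U\cap V$. This matches the paper's (one-line) justification, so there is nothing to add.
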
 

The following example shows that Corollary \ref{cor:flats} does not
describe all flats of a matroid union.  Consider the rank-$2$ column
matroid $M$ on the set $\{a,b,c,d,e\}$ given by the matrix
\[
\left(
\begin{array}{ccccc} 
  x_a & x_b & x_c & 0 & 0
  \\
  1 & 0 & 0 & 0 & 1
\end{array}
\right).
\]
This is the matroid union of two rank-$1$ matroids: in the first, $d$
and $e$ are the loops, so its flats are $\{d,e\}$ and $\{a,b,c,d,e\}$;
in the second, $b$, $c$, and $d$ are the loops, so its flats are
$\{b,c,d\}$ and $\{a,b,c,d,e\}$.  The flats of $M$ are $\{d\}$,
$\{b,c,d\}$, $\{a,d\}$, $\{d,e\}$, and $\{a,b,c,d,e\}$.  The $M$-flat
$\{d\}$ is the intersection $\{d,e\} \cap \{b,c,d\}$.  The flat
$\{a,d\}$ is not described in Corollary \ref{cor:flats}.

\begin{cor}\label{cor:cyclicsets}
  If $C$ is a $(G \join H)$-circuit, then $C$ is a union of
  $G$-circuits as well as a union of $H$-circuits.  In particular, if
  a set is cyclic (that is, a union of circuits) in $G \join H$, then
  it is cyclic in both $G$ and $H$.
\end{cor}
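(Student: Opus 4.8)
The plan is to obtain this as an immediate consequence of Lemma~\ref{lem:quotient} together with the property of quotients recalled just before it, namely that every circuit of a matroid $L$ is a union of circuits of any quotient $Q$ of $L$. First I would invoke Lemma~\ref{lem:quotient} to note that $G$ is a quotient of $G \join H$; applying the quoted property with $L = G \join H$ and $Q = G$ then shows that any $(G \join H)$-circuit $C$ is a union of $G$-circuits, each of them necessarily a subset of $C$. Since $G \join H = H \join G$, the same reasoning with the roles of $G$ and $H$ interchanged shows that $C$ is also a union of $H$-circuits, which is the first assertion.

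For the statement about cyclic sets, suppose $X \subseteq E$ is a union of $(G \join H)$-circuits, say $X = \bigcup_{j} C_j$. By the first part, each $C_j$ is a union of $G$-circuits (all of them contained in $C_j$, hence in $X$), so $X$ itself is a union of $G$-circuits and is therefore cyclic in $G$; replacing $G$ by $H$ throughout gives that $X$ is cyclic in $H$ as well.

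I do not anticipate a genuine obstacle here: the mathematical content is entirely carried by Lemma~\ref{lem:quotient} and the cited decomposition of $L$-circuits into $Q$-circuits. The only point requiring a little care is purely bookkeeping — that the constituent $G$-circuits and $H$-circuits lie inside the circuit, or cyclic set, one started with — and this is automatic from the meaning of ``is a union of.'' One could also observe in passing that the case of a loop-circuit is covered, since an element that is a loop of $G \join H$ is a loop of both $G$ and $H$, so that such a circuit is trivially a union of circuits in each of $G$ and $H$.
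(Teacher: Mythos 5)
Your argument is correct and is exactly the route the paper intends: the corollary is stated without proof precisely because it follows from Lemma~\ref{lem:quotient} combined with the quotient property quoted just before it (every circuit of a matroid is a union of circuits of any quotient), applied to $Q=G$ and $Q=H$ with $L=G\join H$. The extension to cyclic sets by taking unions of circuits is the same immediate bookkeeping the paper has in mind.
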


The converse of Corollary \ref{cor:cyclicsets}, as one might expect,
is false.  For example, if $0<r<n$, the uniform matroids $U_{r,n}$ and
$U_{n-r,n}$ have many cyclic sets in common, but their matroid union
$U_{r,n} \vee U_{n-r,n}$, which is $U_{n,n}$, has no circuits and
hence no nonempty cyclic sets.

We next address deletions and certain contractions of matroid unions.

\begin{lemma}\label{lem:uniondel}
  Let $G$ and $H$ be matroids on $E$.  For any subset $X$ of $E$,
  $$(G\join H)\del X = (G\del X)\join (H\del X).$$
  If each element of $X$ is a loop of at least one of $G$ or $H$,
  then
  $$(G\join H)/X = (G/X)\join (H/X).$$
\end{lemma}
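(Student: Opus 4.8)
The first identity is the easy one, and I would dispose of it immediately using the independent-set description of matroid union. A subset $I$ of $E-X$ is independent in $(G\join H)\del X$ if and only if $I\in\mcI(G\join H)$, that is, $I=I_G\cup I_H$ for some $I_G\in\mcI(G)$ and $I_H\in\mcI(H)$; any such $I_G$ and $I_H$ are automatically subsets of $I\subseteq E-X$, hence independent in $G\del X$ and in $H\del X$, respectively. The reverse inclusion is equally routine, so $\mcI\bigl((G\join H)\del X\bigr)=\mcI\bigl((G\del X)\join(H\del X)\bigr)$.

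For the contraction identity I would work with the rank function, since independent-set bookkeeping is awkward for contractions while the formula (\ref{eqn:rankunion}) exposes exactly where the loop hypothesis enters. The heart of the argument is the claim that, when every element of $X$ is a loop of $G$ or of $H$, the minimum defining $r_{G\join H}(X\cup Y)$ in (\ref{eqn:rankunion}) is attained at a set containing $X$; equivalently, for every $Y\subseteq E-X$,
\[
r_{G\join H}(X\cup Y)=\min_{W\subseteq Y}\bigl\{\,r_G(X\cup W)+r_H(X\cup W)+|Y-W|\,\bigr\}.
\]
Granting this, the rest is bookkeeping: taking $Y=\emptyset$ gives $r_{G\join H}(X)=r_G(X)+r_H(X)$, so for $Y\subseteq E-X$,
\[
r_{(G\join H)/X}(Y)=r_{G\join H}(X\cup Y)-r_{G\join H}(X)=\min_{W\subseteq Y}\bigl\{\,r_{G/X}(W)+r_{H/X}(W)+|Y-W|\,\bigr\},
\]
where I use $r_{G/X}(W)=r_G(X\cup W)-r_G(X)$ and its $H$-analogue. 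By (\ref{eqn:rankunion}) applied to $G/X$ and $H/X$ on the ground set $E-X$, the last expression is $r_{(G/X)\join(H/X)}(Y)$, which is what we want.

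To prove the claim I would start from an arbitrary $V\subseteq X\cup Y$ and compare it with $V\cup X$ (which is $X\cup W$ for $W=V\cap Y$): since $|(X\cup Y)-V|=|Y-V|+|X-V|$ whereas $|(X\cup Y)-(V\cup X)|=|Y-V|$, replacing $V$ by $V\cup X$ does not increase the value in (\ref{eqn:rankunion}) provided
\[
r_G(V\cup X)+r_H(V\cup X)\le r_G(V)+r_H(V)+|X-V|.
\]
This single inequality is where the hypothesis is needed. I would partition $X-V$ as a disjoint union $P\cup Q$ with each element of $P$ a loop of $G$ and each element of $Q$ a loop of $H$, which is possible precisely because every element of $X$ is a loop of at least one of $G$ and $H$. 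Then $r_G(V\cup X)=r_G(V\cup P\cup Q)\le r_G(V\cup P)+|Q|=r_G(V)+|Q|$, since adjoining the $G$-loops $P$ does not change $G$-rank; symmetrically $r_H(V\cup X)\le r_H(V)+|P|$; adding these, with $|P|+|Q|=|X-V|$, yields the inequality. I expect this reduction step to be the only real obstacle: the hypothesis is genuinely needed there, and without it the contraction of a matroid union need not equal the union of the contractions.
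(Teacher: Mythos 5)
Your proof is correct, but it takes a genuinely different route from the paper's. For the contraction identity the paper reduces to single-element contractions: when $x$ is a loop of both $G$ and $H$ it is a loop of $G\join H$ and the deletion identity applies, and when $x$ is a loop of exactly one of them the paper runs a short chain of equivalences on independent sets ($I\in\mcI((G\join H)/x)$ iff $I\cup x\in\mcI(G\join H)$ iff $I\cup x=(I_G\cup x)\cup I_H$ with $I_G\cup x\in\mcI(G)$, $I_H\in\mcI(H)$, etc.), with the general case following by iterating over the elements of $X$. You instead work globally with the rank formula (\ref{eqn:rankunion}), showing that under the loop hypothesis the minimizing set for $r_{G\join H}(X\cup Y)$ may be taken to contain $X$ (via the partition of $X-V$ into $G$-loops and $H$-loops, which is exactly where the hypothesis enters), and then subtracting the constant $r_{G\join H}(X)=r_G(X)+r_H(X)$ inside the minimum; the details check out, including the identity $|(X\cup Y)-V|=|X-V|+|Y-V|$ and the use of $r_{G/X}(W)=r_G(X\cup W)-r_G(X)$. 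What your approach buys is that it treats all of $X$ at once, dispensing with the implicit induction on $|X|$ (and with the small check that the hypothesis persists after each single-element contraction), and it isolates transparently the one inequality where the loop condition is needed; what the paper's approach buys is brevity and the fact that it stays entirely at the level of independent sets, never invoking the rank formula. Your treatment of the deletion identity matches the paper's ``immediate.''
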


\begin{proof}
  The first part is immediate.  For the second, note first that if $x$
  is a loop of both $G$ and $H$, then it is a loop of $G \join H$, so
  $(G \join H)/x = (G \join H) \del x$.  To complete the proof, it
  suffices to treat a single-element contraction $(G\join H)/x$ where
  $x$ is a loop of $H$ but not of $G$.  In this case the result holds
  since the statements below are equivalent:
  \begin{enumerate}
  \item[(i)] $I\in\mcI((G\join H)/x)$,
  \item[(ii)] $I\cup x\in\mcI(G\join H)$,
  \item[(iii)] $I\cup x = (I_G\cup x)\cup I_H$ for some $I_G\cup
    x\in\mcI(G)$ and $I_H\in\mcI(H)$,
  \item[(iv)] $I= I_G\cup I_H$ for some $I_G\in\mcI(G/x)$ and
    $I_H\in\mcI(H/x)$,
  \item[(v)] $I\in\mcI((G/x)\join (H/x))$.\qedhere
\end{enumerate}
\end{proof}

To see that the hypothesis in the second part of Lemma
\ref{lem:uniondel} is needed, take $G$ and $H$ to be the uniform
matroid $U_{2,4}$ and let $X$ be a single-element set.

The set of all matroids on a given set $E$ is ordered by the
\emph{weak order}, denoted by $\leq_w$, where $H \leq_w G$ if and only
if $r_H(X)\leq r_G(X)$ for all subsets $X$ of $E$; equivalently, every
set that is independent in $H$ is also independent in $G$.  This
relation makes precise the idea that $G$ is freer than $H$.  The next
lemma follows easily from the definitions.

\begin{lemma}\label{lem:unionspreserveweak}
  Let $G_1$, $G_2$, $H_1$, $H_2$, $G$, and $H$ be matroids on the same
  set $E$.
  \begin{enumerate}
  \item If $G_1\leq_w G_2$ and $H_1\leq_w H_2$, then $G_1\join
    H_1\leq_w G_2\join H_2$.
  \item Assume $G_1\leq_w G\leq_w G_2$ and $H_1\leq_w H\leq_w H_2$. If
    $G_1\join H_1= G_2\join H_2=K$, then $G\join H =K$.
  \end{enumerate}
\end{lemma}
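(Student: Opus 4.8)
The plan is to translate everything into the independent-set reformulation of the weak order recorded just before the lemma: $H\leq_w G$ if and only if $\mcI(H)\subseteq\mcI(G)$. For part (1), I would take an arbitrary member of $\mcI(G_1\join H_1)$; by the definition of matroid union it has the form $I_{G_1}\cup I_{H_1}$ with $I_{G_1}\in\mcI(G_1)$ and $I_{H_1}\in\mcI(H_1)$. Since $G_1\leq_w G_2$ gives $\mcI(G_1)\subseteq\mcI(G_2)$ and likewise $\mcI(H_1)\subseteq\mcI(H_2)$, the same set is simultaneously a union of a $G_2$-independent set and an $H_2$-independent set, hence lies in $\mcI(G_2\join H_2)$. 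Thus $\mcI(G_1\join H_1)\subseteq\mcI(G_2\join H_2)$, which is exactly $G_1\join H_1\leq_w G_2\join H_2$.

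For part (2), I would simply apply part (1) twice. From $G_1\leq_w G$ and $H_1\leq_w H$ we get $K=G_1\join H_1\leq_w G\join H$, and from $G\leq_w G_2$ and $H\leq_w H_2$ we get $G\join H\leq_w G_2\join H_2=K$. Since the weak order is a partial order --- in particular antisymmetric, because $r_A(X)\le r_B(X)$ and $r_B(X)\le r_A(X)$ for all $X$ force $A=B$ --- sandwiching $G\join H$ between two copies of $K$ gives $G\join H=K$.

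There is essentially no obstacle here; the only points that need a word are the cited equivalence between the rank-function definition of $\leq_w$ used in the statement and the independent-set reformulation, and the antisymmetry of $\leq_w$ invoked at the last step. If one preferred to bypass the independent-set reformulation, part (1) can be proved directly from the rank formula (\ref{eqn:rankunion}): for each $X$ and each $W\subseteq X$, the monotonicity inequalities $r_{G_1}(W)\le r_{G_2}(W)$ and $r_{H_1}(W)\le r_{H_2}(W)$ make every term of the minimum defining $r_{G_1\join H_1}(X)$ no larger than the corresponding term for $r_{G_2\join H_2}(X)$, so the two minima satisfy $r_{G_1\join H_1}(X)\le r_{G_2\join H_2}(X)$, and part (2) again follows by two applications of part (1) together with antisymmetry.
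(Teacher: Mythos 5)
Your proof is correct, and it is exactly the argument the paper intends when it says the lemma ``follows easily from the definitions'': part (1) via the independent-set formulation of $\leq_w$ and of matroid union, and part (2) by sandwiching $G\join H$ between $G_1\join H_1$ and $G_2\join H_2$ using part (1) and antisymmetry. No gaps.
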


To understand matroid union intuitively, we look at the case when $G$
and $H$ are the column matroids of the matrices $\mathrm{Mat}(G)$ and
$\mathrm{Mat}(H)$, both over the field $\mathbb{F}$. Let $K$ be the
column matroid of the matrix
\[
\left[\begin{array}{c}
\underline{\, \mathrm{Mat}(G) \,}\\
\, \mathrm{Mat}(H) \,
\end{array}\right]
\]
obtained by putting $\mathrm{Mat}(G)$ atop $\mathrm{Mat}(H)$.  It is
not hard to see (using, for example, the multiple Laplace expansion
for determinants) that if a set $I$ of columns is $K$-independent,
then $I$ is the union of a $G$-independent set $I_G$ and an
$H$-independent set $I_H$.  The converse is not necessarily true, as
there may be algebraic relations between the entries of
$\mathrm{Mat}(G)$ and $\mathrm{Mat}(H)$ affecting the linear
dependence of $I_G \cup I_H$.  The matroid union $G \vee H$ is
obtained by destroying these algebraic relations.  Let
$\mathbb{F}(x_e)$ be the transcendental extension of $\mathbb{F}$
obtained by adjoining elements $x_e$ transcendental over $\mathbb{F}$,
one for each element $e$ in $E$.  Let $\mathrm{GenMat}(G)$ be the
``generic'' matrix obtained by multiplying the column indexed by $e$
in $\mathrm{Mat}(G)$ by the transcendental $x_e$.  The matroid union
$G \vee H$ is the column matroid of the matrix
\[
\left[\begin{array}{c}
\underline{\, \mathrm{GenMat}(G) \,}\\
\, \mathrm{Mat}(H) \,
\end{array}\right]. 
\]

Matroid union is a matroid construction analogous to putting a generic
matrix on top of another matrix.  In particular, taking the matroid
union of a rank-preserving extension of $M$ and the extension of $N$
by loops is analogous to constructing a block upper-triangular matrix
where the upper blocks are generic submatrices.  This analogy is
formalized by the next theorem and illustrated by the example given in
Figure~\ref{whirl}.

\begin{thm}\label{unionthm}
  Assume that $M$ and $N$ are matroids on the disjoint sets $S$ and
  $T$.  Let $N_0$ be $N\oplus U_{0,S}$.  If $M^+$ is any extension of
  $M$ to $S\cup T$ with $r(M)=r(M^+)$, then the matroid union $M^+
  \join N_0$ is a semidirect sum of $M$ and $N$.
\end{thm}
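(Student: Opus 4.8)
The plan is to check directly the two conditions in Definition~\ref{dfn:SDP}, namely $K|S = M$ and $K/S = N$ where $K = M^+\vee N_0$, reducing each to one of the two parts of Lemma~\ref{lem:uniondel}.

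First I would handle the restriction. Since restricting a matroid on $S\cup T$ to $S$ is the same as deleting $T$, the first part of Lemma~\ref{lem:uniondel} gives $K|S = (M^+\del T)\vee(N_0\del T)$. Here $M^+\del T = M^+|S = M$ by hypothesis, and $N_0\del T = N_0|S = U_{0,S}$ since $N_0 = N\oplus U_{0,S}$. Because $\emptyset$ is the only independent set of $U_{0,S}$, the definition of matroid union shows $\mcI(M\vee U_{0,S}) = \mcI(M)$, so $K|S = M$. (More generally, adjoining loops does not affect a matroid union; I would record this small observation once and reuse it below.)

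Next I would handle the contraction. The key point is that every element of $S$ is a loop of $N_0 = N\oplus U_{0,S}$, so the hypothesis of the second part of Lemma~\ref{lem:uniondel} is satisfied with $X = S$, yielding $K/S = (M^+/S)\vee(N_0/S)$. The factor $N_0/S$ equals $(N\oplus U_{0,S})/S = N$. For the factor $M^+/S$: since $M^+|S = M$ we have $r_{M^+}(S) = r_M(S) = r(M) = r(M^+)$, so $M^+/S$ is a rank-$0$ matroid on $T$, hence $M^+/S = U_{0,T}$. Applying the loops-do-not-matter observation again, $K/S = U_{0,T}\vee N = N$, which completes the verification.

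I expect no serious obstacle here: the whole argument is bookkeeping once Lemma~\ref{lem:uniondel} is in hand. The one place that requires a moment's care is recognizing that the loop hypothesis needed to push a contraction through a matroid union is exactly what the rank-$0$ direct summand $U_{0,S}$ in $N_0$ supplies, and that the rank-preservation hypothesis $r(M^+)=r(M)$ is precisely what forces $M^+/S$ to collapse to the all-loops matroid $U_{0,T}$; without either of these the corresponding identity would fail, as the examples following Lemma~\ref{lem:uniondel} illustrate.
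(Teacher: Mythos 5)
Your argument is correct and is essentially the paper's own proof: both parts invoke Lemma~\ref{lem:uniondel}, with the restriction reducing to $M\join U_{0,S}=M$ and the contraction to $(M^+/S)\join N$, where the rank hypothesis forces $M^+/S$ to consist only of loops. Your proposal simply spells out the bookkeeping (the loop hypothesis for pushing the contraction through the union, and why $r(M^+)=r(M)$ collapses $M^+/S$) that the paper leaves implicit.
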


\begin{proof}
  Lemma \ref{lem:uniondel} gives $(M^+ \join N_0)|S = M \join
  U_{0,S}$, which is $M$ since $U_{0,S}$ contains only loops.  The
  lemma also gives $(M^+ \join N_0)/S = (M^+/S) \join (N_0/S)$, which
  is $N$ since $M^+/S$ contains only loops.
\end{proof}

\begin{figure}
  \begin{center}
    \includegraphics[width = 3.15truein]{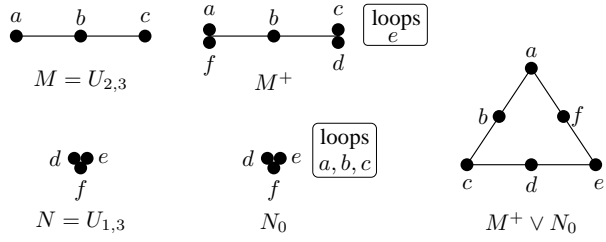}
 \end{center}
 \caption{The $3$-whirl as the matroid union of extensions of
   $U_{2,3}$ and $U_{1,3}$.}\label{whirl}
\end{figure}

The next lemma describes a natural way in which a circuit of $N$
extends to a circuit of $M^+ \join N_0$. For a basis $B$ of $M$ and $a
\in (S\cup T)- B$, let $C(a,B)$ be the \emph{fundamental circuit} in
$M^+$ of $a$ relative to $B$, that is, the unique circuit contained in
$B \cup a$. For a basis $B$ of $M$ and circuit $C$ of $N$, define
\begin{equation}\label{IBC}
  I(B,C) = \bigcup_{a\in C}\bigl(C(a,B)-a\bigr).
\end{equation}

\begin{lemma}\label{lem:Ncircuits}
  If $C\in\mcC(N)$ and $I\in\mcI(M)$, then
  \begin{enumerate}
  \item $I \cup C \in \mcI (M^+ \join N_0)$ if and only if $I \cup
    a \in \mcI(M^+)$ for some $a \in C$, and
  \item $I \cup C\in\mcC(M^+ \vee N_0)$ if and only if $I = I(B,C)$
    for some basis $B$ of $M$.
  \end{enumerate}
\end{lemma}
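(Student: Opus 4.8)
The plan is to reduce everything to the closure operator of $M^+$, using the description of the independent sets of a matroid union together with the observation that the independent sets of $N_0=N\oplus U_{0,S}$ are exactly the $N$-independent subsets of $T$. Throughout, note that $I\subseteq S$ and $C\subseteq T$, so $I\cap C=\emptyset$, and that $I\in\mcI(M)$ gives $I\in\mcI(M^+)$ since $M=M^+|S$. For part~(1): if $I\cup a\in\mcI(M^+)$ for some $a\in C$, then $C-a\in\mcI(N)=\mcI(N_0)$ because $C$ is an $N$-circuit, so $I\cup C=(I\cup a)\cup(C-a)$ exhibits $I\cup C$ as the union of an $M^+$-independent set with an $N_0$-independent set. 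Conversely, if $I\cup C=J_1\cup J_2$ with $J_1\in\mcI(M^+)$ and $J_2\in\mcI(N_0)$, then $J_2\subseteq T$ forces $J_2\subseteq C$; since $C$ is a circuit, $J_2\subsetneq C$, and any $a\in C-J_2$ lies in $J_1$, as does all of $I$ (being disjoint from $J_2\subseteq T$), so $I\cup a\subseteq J_1$ and hence $I\cup a\in\mcI(M^+)$.

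For part~(2), I would first translate ``$I\cup C\in\mcC(M^+\join N_0)$'' into a statement about $\cl_{M^+}$. By part~(1) and $I\in\mcI(M^+)$, the set $I\cup C$ is dependent if and only if $C\subseteq\cl_{M^+}(I)$. A proper subset $X\subsetneq I\cup C$ with $X\cap T\subsetneq C$ is automatically independent, since $X\cap T\in\mcI(N)$ and $X\cap S\subseteq I$ is $M^+$-independent; so the only proper subsets that could fail to be independent are those of the form $I'\cup C$ with $I'\subsetneq I$, and by part~(1) such a set is independent exactly when $C\not\subseteq\cl_{M^+}(I')$. Monotonicity of $\cl_{M^+}$ then shows that all of these are independent if and only if $C\not\subseteq\cl_{M^+}(I-b)$ for every $b\in I$. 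Hence, for $I\in\mcI(M)$, we have $I\cup C\in\mcC(M^+\join N_0)$ precisely when $C\subseteq\cl_{M^+}(I)$ but $C\not\subseteq\cl_{M^+}(I-b)$ for all $b\in I$.

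It then remains to match this closure condition with the existence of a basis $B$ of $M$ with $I=I(B,C)$. For the forward direction, extend $I$ to a basis $B$ of $M$, which is also a basis of $M^+$ since $r(M^+)=r(M)$; for each $a\in C$ we have $a\in\cl_{M^+}(I)$ with $I$ independent, so the unique circuit contained in $I\cup a$ is the fundamental circuit $C(a,B)$, whence $C(a,B)-a\subseteq I$ and $I(B,C)\subseteq I$. If some $b\in I$ were absent from $I(B,C)$, then $C(a,B)-a\subseteq I-b$ for every $a\in C$, giving $C\subseteq\cl_{M^+}(I-b)$ and contradicting the circuit condition; so $I=I(B,C)$. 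For the converse, given $I=I(B,C)$ with $B$ a basis of $M$, we get $I\subseteq B$ (so $I\in\mcI(M)$) and $C\subseteq\cl_{M^+}(I)$ because $a\in\cl_{M^+}(C(a,B)-a)\subseteq\cl_{M^+}(I)$ for each $a\in C$; and for $b\in I$, choosing $a\in C$ with $b\in C(a,B)-a$, the standard description of fundamental circuits (see \cite{oxley}) gives $a\notin\cl_{M^+}(B-b)$, and $I-b\subseteq B-b$ then yields $a\notin\cl_{M^+}(I-b)$, so $C\not\subseteq\cl_{M^+}(I-b)$, as needed.

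The main obstacle is the forward direction of part~(2): one has to realize that an arbitrary basis extension $B\supseteq I$ does the job, the crux being that $a\in\cl_{M^+}(I)$ together with the independence of $I$ forces $C(a,B)$ to lie inside $I\cup a$, and that the minimality clause of the circuit condition is exactly what prevents $I(B,C)$ from being a proper subset of $I$. The bookkeeping with $\mcI(N_0)$ and the relation $\mcI(M)=\{J\in\mcI(M^+):J\subseteq S\}$ is routine and is best recorded once at the outset.
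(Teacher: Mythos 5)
Your proof is correct and follows essentially the same route as the paper's: part (1) is argued identically, and in part (2) you likewise reduce everything to $M^+$ via part (1), extend $I$ to a basis $B$ to get $C(a,B)-a\subseteq I$ for all $a\in C$, and verify the one-element deletions using standard facts about fundamental circuits. The only differences are cosmetic: you package the deletion checks as a criterion in terms of $\cl_{M^+}$ and close the forward direction by contradicting the minimality clause directly, where the paper instead observes that the circuit $I(B,C)\cup C$ sits inside the circuit $I\cup C$.
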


\begin{proof}  
  If $I \cup a$ is in $\mcI(M^+)$ for some $a \in C$, then, since $C -
  a$ is in $\mcI(N_0)$, their union, $I \cup C$, is in $\mcI(M^+ \join
  N_0)$.  Conversely, if $I \cup C \in \mcI(M^+ \join N_0)$, then $I
  \cup C= J_{M^+} \cup J_{N_0}$ for some sets $J_{M^+} \in \mcI(M^+)$
  and $J_{N_0} \in \mcI(N_0)$.  Elements in $I$ are loops of $N_0$ and
  $C \not\in \mcI(N_0)$, so $J_{N_0} \subsetneq C$.  Therefore $I
  \subsetneq J_{M^+}$, that is, there is an element $a \in C$ with $I
  \cup a \in \mcI(M^+)$. This verifies part (1).

  Moving to part (2), let $I = I(B,C)$, where $B$ is a basis of $M$.
  For all $a\in C$, the set $I$ contains $C(a,B)-a$, so $I \cup
  a\not\in \mcI (M^+)$. By part (1), $I \cup C \not\in \mcI(M^+ \join
  N_0)$. We will show that $I \cup C$ is a circuit by showing that
  each of its one-element deletions is independent.  If $b \in I$,
  then $b \in C(a,B)$ for some $a \in C$ and so $(I - b) \cup a
  \in \mcI(M^+)$; by part (1), $(I - b) \cup C \in \mcI (M^+ \join
  N_0)$.  If $a \in C$, then $(I \cup C) - a \in \mcI (M^+ \join N_0)$
  since $C-a\in\mcI(N_0)$ and $I\in\mcI(M^+)$.
   
  To finish the proof, let $I \cup C$ be an $(M^+ \join N_0)$-circuit.
  Extend $I$ to a basis $B$ of $M$.  By part (1), $I \cup a \notin
  \mcI (M^+)$ for all $a \in C$, so $C(a,B)-a \subseteq I$.
  Thus, $I(B,C) \subseteq I$.  By the previous paragraph, $I(B,C) \cup
  C$ is an $(M^+ \join N_0)$-circuit; since it is contained in the
  $(M^+ \join N_0)$-circuit $I\cup C$, we conclude that $I = I(B,C)$.
\end{proof}

To close this section, we note that semidirect sums can also be
obtained via the operation that is dual to matroid union.  For
matroids $G$ and $H$ defined on the same set $E$, their \emph{matroid
  intersection}, denoted $G \wedge H$, is $(G^* \vee H^*)^*$.  The
name comes from the fact that a subset $S$ of $E$ is spanning in
$G\meet H$ if and only if $S=S_G \cap S_H$ for some sets $S_G$ and
$S_H$ with $\cl_G(S_G)=E= \cl_H(S_H)$.  Proposition \ref{prop:rkdl}
and Theorem \ref{unionthm} give the next result.

\begin{thm}\label{wedgethm}
  Assume that $M$ and $N$ are matroids on the disjoint sets $S$ and
  $T$.  Let $M_1$ be $M\oplus U_{|T|,T}$.  If $N'$ is any coextension of
  $N$ to $S\cup T$ with $r(N')=r(N)+|S|$, then the matroid
  intersection $M_1 \wedge N'$ is a semidirect sum of $M$ and $N$.
\end{thm}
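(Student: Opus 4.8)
The plan is to dualize Theorem~\ref{unionthm}, exactly as the sentence preceding the statement indicates. By definition $M_1 \wedge N' = (M_1^* \vee (N')^*)^*$, so it suffices to recognize $M_1^* \vee (N')^*$ as a matroid union of the shape handled by Theorem~\ref{unionthm} — applied with the roles of the two matroids reversed and dualized — and then to invoke Proposition~\ref{prop:rkdl}(2) to pass back to $M_1 \wedge N'$.

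First I would unwind the two duals occurring inside. Since $(G \oplus H)^* = G^* \oplus H^*$ and the dual of the free matroid $U_{|T|,T}$ is the rank-$0$ matroid $U_{0,T}$, we get $M_1^* = M^* \oplus U_{0,T}$. Next, because $N'$ is a coextension of $N$ to $S\cup T$, that is $N'/S = N$, duality (contraction dual to deletion) gives that $(N')^*$ is an extension of $N^*$ to $S\cup T$ with $(N')^*|T = N^*$; and the rank hypothesis $r(N') = r(N)+|S|$ translates, via $r\bigl((N')^*\bigr) = |S|+|T| - r(N')$, into $r\bigl((N')^*\bigr) = |T| - r(N) = r(N^*)$. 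Thus $(N')^*$ is a \emph{rank-preserving} extension of $N^*$.

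Now I would apply Theorem~\ref{unionthm} with $N^*$ (on $T$) playing the role of ``$M$'', with $M^*$ (on $S$) playing the role of ``$N$'', with $(N')^*$ playing the role of the rank-preserving extension ``$M^+$'', and observing that $M^* \oplus U_{0,T}$ is precisely the corresponding ``$N_0$''. The theorem yields that $(N')^* \vee (M^* \oplus U_{0,T}) = (N')^* \vee M_1^*$ is a semidirect sum of $N^*$ and $M^*$. Since matroid union is commutative, this matroid equals $M_1^* \vee (N')^*$, whose dual is $M_1 \wedge N'$. Finally, Proposition~\ref{prop:rkdl}(2), read in the equivalent form that a matroid $L$ is a semidirect sum of $A$ and $B$ if and only if $L^*$ is a semidirect sum of $B^*$ and $A^*$, converts ``$(M_1 \wedge N')^*$ is a semidirect sum of $N^*$ and $M^*$'' into ``$M_1 \wedge N'$ is a semidirect sum of $M$ and $N$'', as required.

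I do not anticipate a real obstacle: the argument is bookkeeping with duality, and the only points needing a moment's care are (i) checking that ``coextension of $N$ with $r(N') = r(N)+|S|$'' dualizes exactly to ``rank-preserving extension of $N^*$'', and (ii) placing the $U_{0,\bullet}$ summand on the correct side so that it matches the ``$N_0$'' of Theorem~\ref{unionthm}. Both become transparent once the dual ranks are written out.
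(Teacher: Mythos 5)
Your proof is correct and is exactly the route the paper intends: the paper proves Theorem~\ref{wedgethm} simply by citing Proposition~\ref{prop:rkdl} and Theorem~\ref{unionthm}, and your argument fills in precisely that duality bookkeeping (identifying $M_1^*=M^*\oplus U_{0,T}$ as the ``$N_0$'' and $(N')^*$ as the rank-preserving extension of $N^*$). No gaps; the details you flag, such as the rank computation $r\bigl((N')^*\bigr)=r(N^*)$ and the order reversal in Proposition~\ref{prop:rkdl}(2), are handled correctly.
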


\section{Principal sums}\label{sec:princsums}

In this section we investigate the special case when $M^+$ is obtained
from a principal extension of $M$ by adding loops.

Let $S$ and $T$ be disjoint sets, fix subsets $A \subseteq S$ and $B
\subseteq T$, and let $M$ be a matroid on $S$.  Informally, the
matroid $M^+(A,B)$ is the extension of $M$ to $S\cup T$ constructed by
putting each element in $B$ freely on the $M$-flat spanned by $A$ and
adding each element of $T-B$ as a loop.  We will define $M^+(A,B)$
formally by iterated single-element principal extensions.

Let $K$ be a matroid on a set $E$, fix a subset $A$ of $E$, and let
$b$ be an element not in $E$.  The single-element extension $K +_A b$
is the matroid on $E \cup b$ with the rank function $r$ defined as
follows: for a subset $X$ of $E$, set $r(X)=r_K(X)$ and
\[
r(X \cup b) = \left\{
  \begin{array}{ll}
    r_K (X),  & \text{if}\,\, A \subseteq \cl_K(X),\\
    r_K (X) + 1, & \text{otherwise.}  
\end{array}\right.  
\]   
It is easy to check that $K +_A b$ is a matroid.  The inclusion $A
\subseteq \cl_K(X)$ is equivalent to the equality $r_K(X) = r_K(X\cup
A)$, so the rank function $r$ can be recast as follows: for
$X\subseteq E$ and $Y$ is a subset of the one-element set $b$,
\begin{equation}\label{eq:rkprext1}
  r(X\cup Y) = \min\{r_K(X\cup A),\, r_K(X)+|Y\cap b|\}.
\end{equation}

Now order the elements $b_1,b_2, \ldots, b_k$ in $B$ and define
$M^+(A,B)$ to be
\[
(((M +_A b_1) +_A b_2) +_A\cdots +_A b_k) \oplus U_{0,T-B}.
\]
A routine induction starting with equation (\ref{eq:rkprext1}) gives the first
assertion in the next lemma.  The second assertion follows easily from
the first.

\begin{lemma}\label{lem:PrincipalExtension}
  The rank function $r$ of the extension $M^+(A,B)$ of $M$ is given by
  \begin{equation}\label{eq:rkprext2}
    r(X\cup Y) = \min\{r_M(X\cup A),\, r_M(X)+|Y\cap B|\}
  \end{equation}
  for $X\subseteq S$ and $Y\subseteq T$.  The subsets of $S \cup T$
  that are independent in $M^+(A,B)$ are the unions $I \cup J$ where
  $I \in\mcI(M)$, $J \subseteq B$, and $r_M (I \cup A) - |I| \geq
  |J|$.
\end{lemma}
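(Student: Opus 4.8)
The plan is to prove equation (\ref{eq:rkprext2}) by induction on $k=|B|$, the number of elements added by principal extension, and then deduce the characterization of independent sets as a corollary. For the base case $k=0$, the matroid $M^+(A,\emptyset)$ is just $M\oplus U_{0,T}$, so $r(X\cup Y)=r_M(X)$ for $X\subseteq S$ and $Y\subseteq T$; since $|Y\cap B|=0$, the right-hand side of (\ref{eq:rkprext2}) reads $\min\{r_M(X\cup A),\,r_M(X)\}=r_M(X)$ because $r_M(X)\le r_M(X\cup A)$, so the formula holds. The inductive step is where the single-element formula (\ref{eq:rkprext1}) does the work: writing $M_j=(M+_A b_1)+_A\cdots+_A b_j$, I would assume the rank of $M_{j-1}$ on subsets of $S\cup\{b_1,\dots,b_{j-1}\}$ is $\min\{r_M(X\cup A),\,r_M(X)+|Y'\cap B|\}$ where $Y'$ picks out which of $b_1,\dots,b_{j-1}$ are present, and then apply (\ref{eq:rkprext1}) to $M_j=M_{j-1}+_A b_j$.

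The key computation in the inductive step is to split on whether $b_j$ lies in the set $Y$ whose rank we are computing. If $b_j\notin Y$, then $r_{M_j}(X\cup Y)=r_{M_{j-1}}(X\cup Y)$ and the inductive hypothesis finishes it immediately. If $b_j\in Y$, write $Y=Y''\cup b_j$ with $b_j\notin Y''$; then by (\ref{eq:rkprext1}) applied with $K=M_{j-1}$,
\[
r_{M_j}(X\cup Y)=\min\{r_{M_{j-1}}(X\cup Y''\cup A),\ r_{M_{j-1}}(X\cup Y'')+1\}.
\]
Now I would apply the inductive hypothesis to each of the two inner terms. For the first term, note $A\subseteq S$ and the extra copies of $A$ are absorbed, so $r_{M_{j-1}}(X\cup Y''\cup A)=\min\{r_M(X\cup A),\,r_M(X\cup A)+|Y''\cap B|\}=r_M(X\cup A)$. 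For the second term, $r_{M_{j-1}}(X\cup Y'')+1=\min\{r_M(X\cup A)+1,\,r_M(X)+|Y''\cap B|+1\}$. Substituting and using $|Y\cap B|=|Y''\cap B|+1$ (since $b_j\in B$), the outer minimum becomes $\min\{r_M(X\cup A),\,r_M(X\cup A)+1,\,r_M(X)+|Y\cap B|\}=\min\{r_M(X\cup A),\,r_M(X)+|Y\cap B|\}$, which is exactly (\ref{eq:rkprext2}) for $M_j$. Finally, direct-summing with $U_{0,T-B}$ adds loops, which changes neither side of the formula for arbitrary $Y\subseteq T$, since $|Y\cap B|$ is unaffected. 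I expect the bookkeeping of exactly which $b_i$ are "already present" versus being added to be the main place where care is needed, but no genuine obstacle arises.

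For the second assertion, I would characterize independence directly from (\ref{eq:rkprext2}). A set $I\cup J$ with $I\subseteq S$ and $J\subseteq T$ is independent in $M^+(A,B)$ iff $r(I\cup J)=|I|+|J|$. By (\ref{eq:rkprext2}), $r(I\cup J)=\min\{r_M(I\cup A),\,r_M(I)+|J\cap B|\}$. For this minimum to equal $|I|+|J|$, each term must be at least $|I|+|J|$: from $r_M(I)+|J\cap B|\ge |I|+|J|$ and $r_M(I)\le|I|$ we get both $r_M(I)=|I|$ (so $I\in\mcI(M)$) and $J\subseteq B$ (so $|J\cap B|=|J|$); and then $r_M(I\cup A)\ge|I|+|J|=|I|+|J|$ rearranges to $r_M(I\cup A)-|I|\ge|J|$. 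Conversely, these three conditions force both terms of the minimum to equal $|I|+|J|$, giving independence. This yields precisely the stated description, with $I\in\mcI(M)$, $J\subseteq B$, and $r_M(I\cup A)-|I|\ge|J|$.
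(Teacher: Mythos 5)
Your induction on $|B|$ starting from equation (\ref{eq:rkprext1}), followed by reading off the independence criterion from the resulting rank formula, is exactly the ``routine induction'' plus easy deduction that the paper invokes for this lemma, and the argument is correct. One cosmetic remark: in the converse direction of your independence argument, the term $r_M(I\cup A)$ need only be at least $|I|+|J|$ rather than equal to it, but since the other term equals $|I|+|J|$ the minimum is still $|I|+|J|$, so your conclusion stands.
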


Note the geometry behind the second part of the lemma: $r_M(A\cup I) -
|I|$ elements are needed to extend $I$ to a basis of $\cl_{M^+}(A\cup
I)$, and since the elements of $B$ are added freely to $\cl_M(A)$, any
subset $J$ of $B$ of that size or smaller can be part of such a basis.

The semidirect sums in the next definition are our main objects of
study.

\begin{dfn}\label{dfn:principalsum}
  Let $S$ and $T$ be disjoint sets, fix subsets $A$ of $S$ and $B$ of
  $T$, and let $M$ be a matroid on $S$ and $N$ a matroid on $T$. The
  \emph{principal sum} $(M,N;A,B)$ is defined by
  \[
  (M,N;A,B) = M^+(A,B) \vee N_0.
  \]
\end{dfn}

Figure \ref{prtrans} illustrates this construction.  The special case
$(M,N;\emptyset,\emptyset)$ is the direct sum $M \oplus N$.  By
Theorem \ref{unionthm}, $(M,N;A,B)$ is a semidirect sum of $M$ and
$N$.

\begin{figure}
  \begin{center}
    \includegraphics[width = 3.1truein]{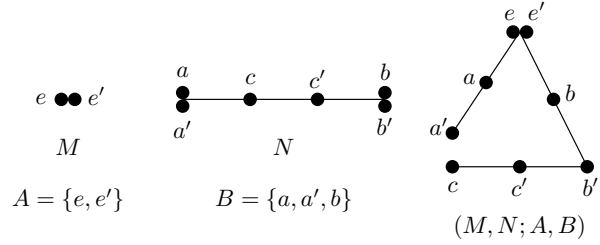}
  \end{center}
  \caption{A principal sum.}\label{prtrans}
\end{figure}

\begin{lemma}\label{lem:SDPindependent} 
  The independent sets of the principal sum $(M,N;A,B)$ are the unions
  of three disjoint sets, $I$, $D$, and $D'$, where \emph{(1)}
  $I\in\mcI(M)$, \emph{(2)} $D\in\mcI(N)$, and \emph{(3)} $D'\subseteq
  B$ with $ |D'| \leq r_M (I \cup A) - |I|$.
\end{lemma}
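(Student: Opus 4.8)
The plan is to unwind the definitions. Since $(M,N;A,B)=M^+(A,B)\vee N_0$, the definition of matroid union says that a subset $X$ of $S\cup T$ is independent in $(M,N;A,B)$ exactly when $X=X_1\cup X_2$ with $X_1\in\mcI(M^+(A,B))$ and $X_2\in\mcI(N_0)$. Because $N_0=N\oplus U_{0,S}$ and every element of $S$ is a loop of $U_{0,S}$, an independent set of $N_0$ contains no element of $S$, so $\mcI(N_0)=\mcI(N)$ (viewed as a collection of subsets of $T$). By Lemma \ref{lem:PrincipalExtension}, the independent sets of $M^+(A,B)$ are the unions $I\cup J$ with $I\in\mcI(M)$, $J\subseteq B$, and $|J|\le r_M(I\cup A)-|I|$. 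Combining these, $X$ is independent in $(M,N;A,B)$ if and only if $X=(I\cup J)\cup D$ for some $I\in\mcI(M)$, $J\subseteq B$ with $|J|\le r_M(I\cup A)-|I|$, and $D\in\mcI(N)$.

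For the ``if'' direction of the lemma, given disjoint sets $I$, $D$, $D'$ satisfying (1)--(3), I would take $X_1=I\cup D'$, which lies in $\mcI(M^+(A,B))$ by Lemma \ref{lem:PrincipalExtension} (apply it with $J=D'$, noting $D'\subseteq B$ and $|D'|\le r_M(I\cup A)-|I|$), and $X_2=D\in\mcI(N_0)$; then $X=X_1\cup X_2$ is independent in the matroid union.

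For the ``only if'' direction, I would start from a witnessing decomposition $X=(I\cup J)\cup D$ as above. Since $I\subseteq S$ while $J\subseteq B\subseteq T$ and $D\subseteq T$, we have $I=X\cap S$; in particular $I\in\mcI(M)$ is forced. The one point needing care is that $J$ and $D$ need not be disjoint, whereas the lemma asks for a partition of $X\cap T$ into a piece in $\mcI(N)$ and a piece contained in $B$ of size at most $r_M(I\cup A)-|I|$. To repair this I set $D':=J\setminus D$ and keep $D$ as is: then $D'$ and $D$ are disjoint, $D'\cup D=J\cup D=X\cap T$, $D'\subseteq J\subseteq B$, and $|D'|\le|J|\le r_M(I\cup A)-|I|$, while $D\in\mcI(N)$. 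Hence $X=I\cup D\cup D'$ is a decomposition of the required form, and these three sets are pairwise disjoint.

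The only (mild) obstacle is this last bookkeeping step: one must not take $D'=J$ outright but instead discard its overlap with $D$, the key observation being that shrinking $D'$ can only help the inequality $|D'|\le r_M(I\cup A)-|I|$ and does not affect $D\in\mcI(N)$. Everything else is a direct translation through Lemma \ref{lem:PrincipalExtension}, the description of $\mcI(N_0)$ coming from $N_0=N\oplus U_{0,S}$, and the definition of matroid union.
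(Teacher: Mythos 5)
Your proof is correct and follows essentially the same route as the paper's: both directions are a direct translation through Lemma \ref{lem:PrincipalExtension}, the definition of matroid union, and the fact that $\mcI(N_0)$ consists of the $N$-independent subsets of $T$. The paper's proof only writes out the forward direction explicitly (the converse, including your disjointness bookkeeping with $D'=J\setminus D$, is left tacit), so your write-up is just a more complete rendering of the same argument.
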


\begin{proof}  
  By Lemma \ref{lem:PrincipalExtension}, the set $I \cup D^{\prime}
  \cup D $ is the union of the $M^+(A,B)$-independent set $I\cup D'$
  and the $N_0$-independent set $D$, and so is independent in
  $(M,N;A,B)$.
\end{proof}

\begin{cor}
  If $A\subseteq A'\subseteq S$ and $B\subseteq B'\subseteq T$, then
  $(M,N;A,B)\leq_w (M,N;A',B')$.
\end{cor}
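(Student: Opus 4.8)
The statement should follow almost immediately from the machinery already in place, so the plan is to reduce it to monotonicity of the rank function of $M$ together with one of the two independence/weak-order lemmas. The key observation is that in both principal sums the second summand is the same matroid $N_0 = N\oplus U_{0,S}$; only the extension $M^+(\,\cdot\,,\cdot\,)$ of $M$ changes. So it suffices to prove $M^+(A,B)\leq_w M^+(A',B')$ and then invoke Lemma~\ref{lem:unionspreserveweak}(1) with $G_1 = M^+(A,B)$, $G_2 = M^+(A',B')$, and $H_1 = H_2 = N_0$ to conclude $(M,N;A,B) = M^+(A,B)\vee N_0 \leq_w M^+(A',B')\vee N_0 = (M,N;A',B')$.

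To see $M^+(A,B)\leq_w M^+(A',B')$, I would compare rank functions using equation~(\ref{eq:rkprext2}). For $X\subseteq S$ and $Y\subseteq T$ we have $r_{M^+(A,B)}(X\cup Y) = \min\{r_M(X\cup A),\, r_M(X) + |Y\cap B|\}$ and similarly with $A',B'$. Since $A\subseteq A'$, monotonicity of $r_M$ gives $r_M(X\cup A)\leq r_M(X\cup A')$; since $B\subseteq B'$, we have $|Y\cap B|\leq |Y\cap B'|$ and hence $r_M(X) + |Y\cap B|\leq r_M(X) + |Y\cap B'|$. Taking the minimum of termwise-larger quantities, $r_{M^+(A,B)}(X\cup Y)\leq r_{M^+(A',B')}(X\cup Y)$ for all such $X,Y$, which is exactly $M^+(A,B)\leq_w M^+(A',B')$.

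Alternatively — and perhaps more transparently — one can argue directly at the level of independent sets using Lemma~\ref{lem:SDPindependent}: an independent set of $(M,N;A,B)$ has the form $I\cup D\cup D'$ with $I\in\mcI(M)$, $D\in\mcI(N)$, $D'\subseteq B$, and $|D'|\leq r_M(I\cup A) - |I|$. Because $A\subseteq A'$ we get $r_M(I\cup A)\leq r_M(I\cup A')$, so the size bound on $D'$ persists with $A'$ in place of $A$; because $B\subseteq B'$ we have $D'\subseteq B'$. Hence $I\cup D\cup D'$ is independent in $(M,N;A',B')$, again by Lemma~\ref{lem:SDPindependent}. Every independent set of $(M,N;A,B)$ being independent in $(M,N;A',B')$ is precisely the weak-order inequality.

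\textbf{Main obstacle.} There is no real obstacle here; the only point worth being careful about is to note explicitly that $N_0$ is literally unchanged as $A,B$ grow, so that the comparison genuinely comes down to the $M$-side, and to cite the correct monotonicity fact (of $r_M$) and the correct preservation lemma (Lemma~\ref{lem:unionspreserveweak}(1) or Lemma~\ref{lem:SDPindependent}). I would write the independent-set version, as it is the shortest and needs nothing beyond Lemma~\ref{lem:SDPindependent}.
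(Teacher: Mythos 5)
Your proposal is correct, and your preferred independent-set argument is exactly the route the paper intends: the corollary is stated immediately after Lemma~\ref{lem:SDPindependent} precisely because every $(M,N;A,B)$-independent set $I\cup D\cup D'$ remains independent in $(M,N;A',B')$ once one notes $r_M(I\cup A)\leq r_M(I\cup A')$ and $D'\subseteq B\subseteq B'$. Your rank-function variant via equation~(\ref{eq:rkprext2}) and Lemma~\ref{lem:unionspreserveweak}(1) is an equally valid alternative, but adds nothing beyond the shorter argument.
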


The principal sum $(M,N;S,T)$ is the free product $M \frp N$, which
was defined by Crapo and Schmitt \cite{fpm,uft}.  Indeed, by Lemma
\ref{lem:SDPindependent}, the independent sets of $(M,N;S,T)$ are the
sets $I \cup D$ where $I \in\mcI(M)$ and $D \subseteq T$ with
$|D|-r_N(D) \leq r(M) - |I|$, which is the description of the
independent sets of $M \frp N$ given in Proposition 1 of \cite{fpm}.
We remark that \cite[Proposition 7.2]{uft} says that the collection of
all semidirect sums of $M$ and $N$ is the interval $[M \oplus N, M
\frp N]$ in the weak order on the set of all matroids on $S \cup T$;
in other words, a matroid $K$ on $S\cup T$ is a semidirect sum of $M$
and $N$ if and only if
\[
M\oplus N\leq_w K\leq_w M\frp N.  
\]

Crapo and Schmitt \cite{fpm} used the free product to prove Welsh's
conjecture that $$f(m+n)\geq f(m)f(n)$$ where $f(k)$ is the number of
non-isomorphic matroids on a $k$-element set.  Independently and
simultaneously, Lemos \cite{Lemos} also proved this conjecture.
Lemos' proof used the $2$-sum $(M+ e)\oplus_2(N\times e)$ of the free
extension of $M$ by $e$ and the free coextension of $N$ by $e$.  With
Theorem \ref{thm:rank} and \cite[Exercise 7.1.1 (a)]{oxley}, it is not
hard to show that $(M+ e)\oplus_2(N\times e)$ is obtained from the
principal sum $(M+e,N;e,T)$ by deleting $e$.

With Lemma \ref{lem:SDPindependent}, we now derive a formula for the
rank function of a principal sum.

\begin{thm}\label{thm:rank}
  Let $P$ be the principal sum $(M,N;A,B)$.  For any sets $X\subseteq
  S$ and $Y\subseteq T$, the rank $r_P(X\cup Y)$ is the minimum of the
  following two quantities:
  \begin{equation}\label{ranka}
    r_M(X\cup A)+r_N(Y)
  \end{equation}
  \begin{equation}\label{rankb}
    r_M(X)+r_N(Y-B)+|Y\cap B|.
  \end{equation}
\end{thm}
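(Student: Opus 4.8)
The plan is to use the rank formula for matroid unions, equation~(\ref{eqn:rankunion}), applied to $P = M^+(A,B) \vee N_0$, together with the explicit rank function of $M^+(A,B)$ from equation~(\ref{eq:rkprext2}) and the fact that $r_{N_0}(W) = r_N(W\cap T)$ for $W\subseteq S\cup T$. Writing $E = S\cup T$ and fixing $X\subseteq S$, $Y\subseteq T$, equation~(\ref{eqn:rankunion}) gives
\[
r_P(X\cup Y) = \min_{W\subseteq X\cup Y}\bigl\{\, r_{M^+(A,B)}(W) + r_N(W\cap T) + |(X\cup Y) - W|\,\bigr\}.
\]
First I would show that the two quantities (\ref{ranka}) and (\ref{rankb}) are each achieved by a specific choice of $W$, so that $r_P(X\cup Y)$ is at most their minimum. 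For (\ref{rankb}), take $W = X\cup(Y-B)$: then $r_{M^+(A,B)}(W) = r_M(X)$ (since $Y-B$ consists of loops of $M^+(A,B)$, and there are no elements of $B$ present, so the ``min'' in (\ref{eq:rkprext2}) — wait, one must be careful: $r_{M^+(A,B)}(X\cup(Y-B))$ equals $\min\{r_M(X\cup A), r_M(X)\} = r_M(X)$), and $r_N(W\cap T) = r_N(Y-B)$, and $|(X\cup Y)-W| = |Y\cap B|$, giving exactly (\ref{rankb}). For (\ref{ranka}), take $W = X\cup Y$: then $r_{M^+(A,B)}(X\cup Y) = \min\{r_M(X\cup A),\, r_M(X) + |Y\cap B|\} \le r_M(X\cup A)$, and $r_N(W\cap T) = r_N(Y)$, and the last term is $0$; combining and using the bound gives at most $r_M(X\cup A) + r_N(Y)$. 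So $r_P(X\cup Y) \le \min\{(\ref{ranka}),(\ref{rankb})\}$.

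The reverse inequality is the substantive part. I would take an arbitrary $W\subseteq X\cup Y$, write $W_S = W\cap S$ and $W_T = W\cap T$, and show that the corresponding summand $r_{M^+(A,B)}(W) + r_N(W_T) + |(X\cup Y)-W|$ is at least one of (\ref{ranka}) or (\ref{rankb}). Using (\ref{eq:rkprext2}), $r_{M^+(A,B)}(W) = \min\{r_M(W_S\cup A),\, r_M(W_S) + |W_T\cap B|\}$, so there are two cases according to which term attains this minimum. In the case $r_{M^+(A,B)}(W) = r_M(W_S\cup A)$, the summand is $r_M(W_S\cup A) + r_N(W_T) + |X-W_S| + |Y-W_T|$; I would use $r_M(W_S\cup A) + |X - W_S| \ge r_M(X\cup A)$ (submodularity/monotonicity: adding $\le |X-W_S|$ elements to $W_S\cup A$ reaches $X\cup A$) and $r_N(W_T) + |Y - W_T| \ge r_N(Y)$, so the summand is $\ge$ (\ref{ranka}). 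In the case $r_{M^+(A,B)}(W) = r_M(W_S) + |W_T\cap B|$, the summand is $r_M(W_S) + |W_T\cap B| + r_N(W_T) + |X-W_S| + |Y-W_T|$; here $r_M(W_S) + |X-W_S| \ge r_M(X)$, and $|W_T\cap B| + |Y - W_T| \ge |Y\cap B|$ (every element of $Y\cap B$ is either in $W_T\cap B$ or in $Y - W_T$), and $r_N(W_T) \ge r_N(W_T - B)$; but I actually need $r_N(W_T) + (\text{stuff}) \ge r_N(Y-B) + |Y\cap B|$, so more care is needed — I would instead bound $r_N(W_T) + |Y - W_T| \ge r_N(W_T\cup(Y-B)) \ge r_N(Y-B)$ (again monotonicity, since $|Y-W_T|\ge |(Y-B)-W_T|$ suffices to reach $Y-B$ from $W_T$), and separately account for $|W_T\cap B|$ against $|Y\cap B|$ via the remaining part of $|Y-W_T|$. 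Assembling these gives the summand $\ge r_M(X) + r_N(Y-B) + |Y\cap B|$, i.e.\ (\ref{rankb}).

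The main obstacle I anticipate is precisely this bookkeeping in the second case: the single quantity $|Y - W_T|$ must simultaneously ``pay for'' recovering $r_N(Y-B)$ from $r_N(W_T)$ and for recovering $|Y\cap B|$ from $|W_T\cap B|$, and one has to check that there is enough of it — formally, that $|Y-W_T| = |(Y-B) - W_T| + |(Y\cap B) - W_T|$ (a disjoint decomposition since $Y-B$ and $Y\cap B$ partition $Y$), after which $r_N(W_T) + |(Y-B)-W_T| \ge r_N(Y-B)$ handles one piece and $|W_T\cap B| + |(Y\cap B) - W_T| \ge |Y\cap B|$ handles the other, with $r_N(W_T)\ge r_N(W_T\cap(\text{anything}))$ available as slack. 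Once that is organized cleanly, both inequalities combine to give $r_P(X\cup Y) = \min\{(\ref{ranka}),(\ref{rankb})\}$. An alternative, possibly slicker, route is to invoke Lemma~\ref{lem:SDPindependent} directly: the independent sets of $P$ contained in $X\cup Y$ are the sets $I\cup D\cup D'$ with $I\in\mcI(M)$, $I\subseteq X$, $D\in\mcI(N)$, $D\subseteq Y$, $D\cap D'=\emptyset$, $D'\subseteq Y\cap B$, and $|D'|\le r_M(I\cup A) - |I|$, and one maximizes $|I| + |D| + |D'|$; the two expressions (\ref{ranka}) and (\ref{rankb}) then correspond to whether the constraint $|D'|\le r_M(I\cup A)-|I|$ or the constraint $D'\subseteq Y\cap B$ is binding, but turning "maximize over independent sets" into these closed forms still requires the same case analysis, so I would present the matroid-union computation as the main argument.
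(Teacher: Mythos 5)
Your proof is correct, but the hard direction is handled differently from the paper. The upper bound $r_P(X\cup Y)\le\min\{(\ref{ranka}),(\ref{rankb})\}$ is essentially the paper's (same choice $W=X\cup(Y-B)$ for (\ref{rankb}); your choice $W=X\cup Y$ for (\ref{ranka}) is in fact slightly cleaner than the paper's detour through $r_P(X\cup A\cup Y)$). For the lower bound, however, the paper constructs, via Lemma~\ref{lem:SDPindependent}, an explicit independent subset of $X\cup Y$ of the required size (a basis $I$ of $M|X$, a basis $D$ of $N|Y$ extending a basis of $N|(Y-B)$, and a suitable portion of $Y\cap B$), splitting on whether inequality (\ref{ineq:rkpivot}) holds; you instead bound every summand $r_{M^+(A,B)}(W)+r_{N_0}(W)+|(X\cup Y)-W|$ of the minimization in equation~(\ref{eqn:rankunion}) from below by one of (\ref{ranka}) or (\ref{rankb}), splitting on which term of equation~(\ref{eq:rkprext2}) attains $r_{M^+(A,B)}(W)$. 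The bookkeeping you flagged as the potential obstacle does close exactly as you say: writing $|Y-W_T|=|(Y-B)-W_T|+|(Y\cap B)-W_T|$, the unit-increase property gives $r_N(W_T)+|(Y-B)-W_T|\ge r_N(Y-B)$, while $W_T\subseteq Y$ gives $|W_T\cap B|+|(Y\cap B)-W_T|=|Y\cap B|$, and similarly $r_M(W_S)+|X-W_S|\ge r_M(X)$ (and, in the other case, $r_M(W_S\cup A)+|X-W_S|\ge r_M(X\cup A)$ and $r_N(W_T)+|Y-W_T|\ge r_N(Y)$). Your route is more self-contained, using only the two rank formulas and elementary rank inequalities; the paper's route is constructive, exhibiting an independent set that attains the minimum, and it reuses Lemma~\ref{lem:SDPindependent}, which plays a role elsewhere in the paper. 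Either argument is a complete proof once written out.
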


\begin{proof}
  Equation (\ref{eqn:rankunion}) implies that for all subsets $W$ of
  $X\cup Y$,
  $$r_P(X\cup Y)\leq r_{M^+(A,B)}(W) + r_{N_0}(W) +|(X\cup Y)-W|.$$ 
  To show that expression (\ref{rankb}) is an upper bound on
  $r_P(X\cup Y)$, take $W=X\cup (Y-B)$ and use Lemma
  \ref{lem:PrincipalExtension}.  A similar argument using $W =X\cup
  A\cup Y$ shows that expression (\ref{ranka}) is an upper bound on
  $r_P(X\cup A\cup Y)$, and so on $r_P(X\cup Y)$.  Thus, it suffices
  to construct a subset of $X\cup Y$ in $\mcI(P)$ whose size is the
  minimum of expressions (\ref{ranka}) and (\ref{rankb}).  Note that
  the second is the minimum if and only if
  \begin{equation}\label{ineq:rkpivot}
    r_M(X\cup A)-r_M(X)\geq |Y\cap B| -r_N(Y) + r_N(Y-B).
  \end{equation}
  Let $I$ be a basis of $M|X$, so $|I| = r_M(X)$.  Let $D_0$ be a
  basis of $N|(Y-B)$, extend $D_0$ to a basis $D$ of $Y$, and let $D'
  = (Y\cap B)-D$.  Thus, $$|D'| =|Y\cap B| - \bigl(r_N(Y) -
  r_N(Y-B)\bigr).$$ If inequality (\ref{ineq:rkpivot}) holds, then
  $r_M(I\cup A)-|I|\geq |D'|$, so $I\cup D\cup D'\in\mcI(P)$ by Lemma
  \ref{lem:SDPindependent}, and this case is completed by observing
  that
  $$|I\cup D\cup D'|
  = r_M(X) + r_N(Y)+ \bigl(|Y\cap B| -r_N(Y) + r_N(Y-B)\bigr),$$ which
  simplifies to expression (\ref{rankb}).  If inequality
  (\ref{ineq:rkpivot}) fails, then let $D''$ be a subset of $D'$ of
  size $r_M(I\cup A)-|I|$, that is, $r_M(X\cup A)-r_M(X)$.  Thus,
  $I\cup D\cup D''\in\mcI(P)$ and
  $$|I\cup D\cup D''| = r_M(X) + r_N(Y)+ \bigl(r_M(X\cup
  A)-r_M(X)\bigr),$$ which simplifies to expression (\ref{ranka}),
  thereby completing the proof.
\end{proof}

The following special case will be used frequently.

\begin{cor}\label{cor:samerkasds}
  Let $P$ be the principal sum $(M,N;A,B)$.  For any sets $X\subseteq
  S$ and $Y\subseteq T$, if $A\subseteq X$ or $Y\cap B = \emptyset$,
  then $r_P(X\cup Y) = r_{M\oplus N}(X\cup Y) = r_M(X) + r_N(Y)$.
\end{cor}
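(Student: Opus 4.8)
The plan is to derive the corollary as a direct consequence of Theorem~\ref{thm:rank} by checking that in each of the two stated cases, one of the expressions (\ref{ranka}) or (\ref{rankb}) collapses to $r_M(X)+r_N(Y)$ while the other is no smaller, so the minimum is exactly $r_M(X)+r_N(Y)$; since $r_M(X)+r_N(Y)=r_{M\oplus N}(X\cup Y)$ by the definition of the direct sum, this finishes the proof.

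First, suppose $A\subseteq X$. Then $\cl_M(X\cup A)=\cl_M(X)$, so $r_M(X\cup A)=r_M(X)$, and hence expression (\ref{ranka}) equals $r_M(X)+r_N(Y)$. It remains to see that expression (\ref{rankb}), namely $r_M(X)+r_N(Y-B)+|Y\cap B|$, is at least $r_M(X)+r_N(Y)$; this reduces to the submodularity-type inequality $r_N(Y-B)+|Y\cap B|\geq r_N(Y)$, which holds because $r_N(Y)\leq r_N(Y-B)+r_N(Y\cap B)\leq r_N(Y-B)+|Y\cap B|$. Thus the minimum of (\ref{ranka}) and (\ref{rankb}) is $r_M(X)+r_N(Y)$.

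Second, suppose $Y\cap B=\emptyset$. Then $Y-B=Y$, so expression (\ref{rankb}) equals $r_M(X)+r_N(Y)+0=r_M(X)+r_N(Y)$. And expression (\ref{ranka}), namely $r_M(X\cup A)+r_N(Y)$, is at least $r_M(X)+r_N(Y)$ since $r_M$ is monotone and $X\subseteq X\cup A$. So again the minimum is $r_M(X)+r_N(Y)$.

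In both cases we conclude $r_P(X\cup Y)=r_M(X)+r_N(Y)$, and the equality with $r_{M\oplus N}(X\cup Y)$ is immediate from the rank formula for the direct sum. There is no real obstacle here: the only thing to be careful about is matching up which of the two expressions degenerates in which case and confirming the trivial inequality in the other expression, both of which follow from monotonicity and subadditivity of the rank functions. One could alternatively argue from Lemma~\ref{lem:SDPindependent} by exhibiting a common independent set of size $r_M(X)+r_N(Y)$ inside $X\cup Y$ (taking a basis of $M|X$, a basis of $N|Y$, and no elements of $B$), but invoking Theorem~\ref{thm:rank} is cleaner.
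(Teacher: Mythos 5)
Your proof is correct and follows exactly the route the paper intends: the corollary is stated as an immediate special case of Theorem~\ref{thm:rank}, and your case analysis (expression (\ref{ranka}) collapses when $A\subseteq X$, expression (\ref{rankb}) collapses when $Y\cap B=\emptyset$, with the other expression bounded below via monotonicity or $r_N(Y)\leq r_N(Y-B)+|Y\cap B|$) is precisely the verification the paper leaves to the reader.
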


An \emph{(order) ideal} of subsets of a set $E$ is a collection $\mcA$
of subsets of $E$ such that if $X\in \mcA$ and $Y\subseteq X$, then
$Y\in \mcA$.  The order-theoretic dual of an ideal is a \emph{filter},
that is, a collection $\mcB$ of subsets of $E$ such that if $X\in
\mcB$ and $X \subseteq Y$, then $Y \in \mcB$.

\begin{lemma}\label{lem:idealfilter}
  Partition the collection of subsets of $S\cup T$ into the following
  three sets (one or two of which might be empty), where we take
  $X\subseteq S$ and $Y\subseteq T$:
  \begin{align*}
    & \mcR_< = \{X\cup Y\,:\, r_M(X\cup A)+r_N(Y)<
    r_M(X)+r_N(Y-B)+|Y\cap B|\},
    \\
    & \mcR_= = \{X\cup Y\,:\, r_M(X\cup A)+r_N(Y)=r_M(X)+r_N(Y-B)+
    |Y\cap B|\},
    \\
    & \mcR_> = \{X\cup Y\,:\, r_M(X\cup A)+r_N(Y)>r_M(X)+r_N(Y-B)+
    |Y\cap B|\}.
  \end{align*}
  Also, set $\mcR_\leq = \mcR_<\cup\mcR_=$ and $\mcR_\geq =
  \mcR_=\cup\mcR_>$.  The collections $\mcR_<$ and $\mcR_\leq$ are
  filters and the collections $\mcR_>$ and $\mcR_\geq$ are ideals.
\end{lemma}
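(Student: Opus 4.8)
The plan is to reduce the whole statement to a single monotonicity fact. For $X\subseteq S$ and $Y\subseteq T$, write $f(X\cup Y)=r_M(X\cup A)+r_N(Y)$ and $g(X\cup Y)=r_M(X)+r_N(Y-B)+|Y\cap B|$ for expressions (\ref{ranka}) and (\ref{rankb}), and set $h=g-f$. Then $\mcR_<$, $\mcR_=$, and $\mcR_>$ are precisely the collections of sets on which $h$ is positive, zero, and negative, so $\mcR_{\leq}=\{Z:h(Z)\geq 0\}$ and $\mcR_{\geq}=\{Z:h(Z)\leq 0\}$. Hence it suffices to show that $h$ is \emph{order-preserving}, that is, $Z\subseteq Z'$ implies $h(Z)\leq h(Z')$: granting this, $\{h>0\}$ and $\{h\geq 0\}$ are upward closed, hence filters, while $\{h<0\}$ and $\{h\leq 0\}$ are downward closed, hence ideals.

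Next I would exploit the fact that $h$ separates into an $S$-part and a $T$-part. With $\phi(X)=r_M(X)-r_M(X\cup A)$ for $X\subseteq S$ and $\psi(Y)=r_N(Y-B)+|Y\cap B|-r_N(Y)$ for $Y\subseteq T$, one has $h(X\cup Y)=\phi(X)+\psi(Y)$. Thus if $\phi$ and $\psi$ are each order-preserving on the subsets of $S$ and of $T$, respectively, then for $X\subseteq X'$ and $Y\subseteq Y'$ we get $h(X\cup Y)=\phi(X)+\psi(Y)\leq\phi(X')+\psi(Y')=h(X'\cup Y')$, which is all we need.

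For $\phi$, given $X\subseteq X'\subseteq S$, apply submodularity of $r_M$ to the pair $X\cup A$ and $X'$: their union is $X'\cup A$ and their intersection contains $X$, so $r_M(X\cup A)+r_M(X')\geq r_M(X'\cup A)+r_M(X)$, which rearranges to $\phi(X)\leq\phi(X')$. For $\psi$ it suffices to verify $\psi(Y\cup t)\geq\psi(Y)$ whenever $Y\subseteq T$ and $t\in T-Y$. If $t\in B$, a direct computation gives $\psi(Y\cup t)-\psi(Y)=1-\bigl(r_N(Y\cup t)-r_N(Y)\bigr)$, which is nonnegative since $r_N(Y\cup t)-r_N(Y)\in\{0,1\}$. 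If $t\notin B$, then $\psi(Y\cup t)-\psi(Y)=\bigl(r_N((Y-B)\cup t)-r_N(Y-B)\bigr)-\bigl(r_N(Y\cup t)-r_N(Y)\bigr)$, which is nonnegative because $Y-B\subseteq Y$ and matroid rank has diminishing marginal returns. This completes the argument.

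There is no serious obstacle once the decomposition $h=\phi+\psi$ is in hand; the only points requiring care are invoking submodularity in the right form in each case — the inequality $r(P)+r(Q)\geq r(P\cup Q)+r(P\cap Q)$ together with a monotonicity bound on the intersection term for $\phi$, and the diminishing-returns form for $\psi$ — and splitting the check for $\psi$ into the cases $t\in B$ and $t\notin B$, since $\psi$ treats elements of $B$ and of $T-B$ differently.
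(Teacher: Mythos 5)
Your proof is correct and takes essentially the same route as the paper: both arguments reduce the lemma to the monotonicity in $X$ of $r_M(X\cup A)-r_M(X)$ and the monotonicity in $Y$ of $|Y\cap B|-r_N(Y)+r_N(Y-B)$, which is exactly the content of inequalities (\ref{idealeqn1}) and (\ref{idealeqn2}). The only cosmetic difference is that the paper deduces the second inequality by applying the first to $N^*$ on complements, whereas you verify it directly by submodularity with the case split $t\in B$ versus $t\notin B$.
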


\begin{proof}
  The lemma is a consequence of inequalities (\ref{idealeqn1}) and
  (\ref{idealeqn2}) that we derive below.  If $X\subseteq X'\subseteq
  S$, then $r_{M/X'}(A-X')\leq r_{M/X}(A-X)$, that is,
  \begin{equation}\label{idealeqn1}
    r_M(X'\cup A)-r_M(X')\leq r_M(X\cup A)-r_M(X).
  \end{equation}
  For $Y\subseteq Y'\subseteq T$, the same type of argument applied to
  $S-Y'$ and $S-Y$ in $N^*$ gives
  $$r_{N^*}((S-Y)\cup B)-r_{N^*}(S-Y)\leq r_{N^*}((S-Y')\cup
  B)-r_{N^*}(S-Y'),$$ or, using the connection between the rank
  functions of $N$ and $N^*$,
  \begin{equation}\label{idealeqn2}
    |Y\cap B| - r_N(Y) + r_N(Y-B) \leq |Y'\cap B| - r_N(Y') +
    r_N(Y'-B).\qedhere 
  \end{equation}
\end{proof}

The next result stands in contrast to general matroid unions since
duals of matroid unions need not be matroid unions.

\begin{thm}\label{thm:dual}
  The dual of a principal sum is a principal sum; specifically,
  $$(M,N;A,B)^*=(N^*,M^*;B,A).$$
\end{thm}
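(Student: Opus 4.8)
The plan is to compute the rank function of $(M,N;A,B)^*$ directly and show it agrees with the rank function of $(N^*,M^*;B,A)$ as given by Theorem~\ref{thm:rank}. Recall the standard duality formula: for a matroid $K$ on a set $E$, $r_{K^*}(Z) = |Z| - r(K) + r_K(E-Z)$. Apply this with $K = (M,N;A,B) = P$ and $E = S\cup T$; by Proposition~\ref{prop:rkdl}(1), $r(P) = r(M)+r(N)$. For a test set $Z = X'\cup Y'$ with $X'\subseteq S$ and $Y'\subseteq T$, write $X = S-X'$ and $Y = T-Y'$, so $E - Z = X\cup Y$. Then
\[
r_{P^*}(X'\cup Y') = |X'| + |Y'| - r(M) - r(N) + r_P(X\cup Y),
\]
and Theorem~\ref{thm:rank} tells us $r_P(X\cup Y)$ is the minimum of $r_M(X\cup A)+r_N(Y)$ and $r_M(X)+r_N(Y-B)+|Y\cap B|$.

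The next step is to rewrite each of the two branches in terms of the dual rank functions $r_{M^*}$ and $r_{N^*}$ and the complementary sets $X'$, $Y'$, so that the resulting expression matches the two branches of the rank formula for $(N^*,M^*;B,A)$ — namely (with the roles swapped) the minimum of $r_{N^*}(Y'\cup B)+r_{M^*}(X')$ and $r_{N^*}(Y')+r_{M^*}(X'-A)+|X'\cap A|$. Substituting the duality formula $r_M(W) = |W| - r(M) + r_{M^*}(S-W)$ (and similarly for $N$) into each branch and simplifying — the additive constants $-r(M)$, $-r(N)$ and the $|X'|+|Y'|$ term should cancel cleanly — I expect the first branch of $r_P$ to become the \emph{second} branch of the dual formula and vice versa, which is consistent because taking a minimum commutes with this bijective rewriting. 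Concretely: in branch (\ref{ranka}), $r_M(X\cup A)$ involves the complement $S - (X\cup A) = X' - A$, which produces the $r_{M^*}(X'-A)$ and $|X'\cap A|$ contributions; and $r_N(Y)$ involves $T - Y = Y'$, producing $r_{N^*}(Y')$. In branch (\ref{rankb}), $r_M(X)$ gives $r_{M^*}(X')$, while $r_N(Y-B)+|Y\cap B|$ should, after using $T-(Y-B) = Y'\cup B$, collapse to $r_{N^*}(Y'\cup B)$ up to the bookkeeping constants.

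The main obstacle — really the only nontrivial point — is the bookkeeping in the second branch: verifying the identity $r_N(Y-B) + |Y\cap B| = |Y| - r(N) + r_{N^*}(Y'\cup B) + (\text{constant})$ requires carefully tracking $|Y\cap B|$ versus $|Y'\cap B|$ and using $|B| = |Y\cap B| + |Y'\cap B|$. This is exactly the symmetric computation already done implicitly in the proof of Lemma~\ref{lem:idealfilter} via inequality (\ref{idealeqn2}), where the quantity $|Y\cap B| - r_N(Y) + r_N(Y-B)$ was related to a dual rank expression for $N^*$; I would reuse that identity. Once both branches are rewritten, the constants cancel by design (a sanity check: ranks of $(M,N;A,B)^*$ and $(N^*,M^*;B,A)$ are both $r(M^*)+r(N^*)$ by Proposition~\ref{prop:rkdl}), and the two rank functions coincide on every subset of $S\cup T$, hence the matroids are equal. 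Alternatively, and perhaps more cleanly, one can avoid rank computations entirely: Theorem~\ref{wedgethm} expresses semidirect sums via matroid intersection, and $(M,N;A,B)^* = (M^+(A,B)\vee N_0)^* = (M^+(A,B))^* \wedge (N_0)^*$; identifying $(N_0)^* = N^*\oplus U_{|S|,S}$ and $(M^+(A,B))^*$ as a coextension of $M^*$, together with the known fact that the dual of a principal extension is a principal coextension, realizes $(M,N;A,B)^*$ in the form $(N^*,M^*;B,A)$ — but pinning down that the relevant coextension is precisely $(M^*)^+(B,A)$-type still reduces to the same rank identity, so I would present the direct computation as the cleanest route.
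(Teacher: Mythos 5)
Your proposal is correct and takes essentially the same route as the paper: both apply the dual rank formula $r_{K^*}(Z)=|Z|-r(K)+r_K(E-Z)$ together with Theorem~\ref{thm:rank}, and match the two branches of the resulting minimum (with the branches swapped) against the two branches of the rank formula for $(N^*,M^*;B,A)$. The bookkeeping you flag does cancel exactly as you expect (e.g.\ $S-(X'-A)=X\cup A$ handles the first branch and $|Y'\cup B|=|Y'|+|Y\cap B|$ handles the second), so carrying out the substitution completes the proof.
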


\begin{proof}
  For $X\subseteq S$ and $Y\subseteq T$, the rank of $X\cup Y$ in
  $(M,N;A,B)^*$ is the minimum of the following two expressions:
  \begin{equation}\label{dual1}
    |X|+|Y| -r(M)-r(N) + r_M((S-X)\cup A)+ r_N(T-Y) 
  \end{equation}
  \begin{equation}\label{dual2}
    |X|+|Y| -r(M)-r(N) + r_M(S-X) +r_N((T-Y)-B) +|(T-Y)\cap B|.
  \end{equation}
  The rank of $X\cup Y$ in $(N^*,M^*;B,A)$ is the minimum of 
  $$r_{N^*}(Y\cup B)+r_{M^*}(X) 
  \qquad\text{ and }\qquad r_{N^*}(Y) + r_{M^*}(X-A) + |X\cap A|,$$
  that is, the minimum of the following two expressions:
  \begin{equation}\label{dual3}
    |Y\cup B| - r(N)+r_N(T-(Y\cup B)) + |X| - r(M) +r_M(S-X)
  \end{equation}
  \begin{equation}\label{dual4}
    |Y|-r(N)+r_N(T-Y) + |X-A|-r(M)+r_M(S-(X-A))+|X\cap A|.
  \end{equation}
  Expressions (\ref{dual1}) and (\ref{dual4}) are equal, as are
  expressions (\ref{dual2}) and (\ref{dual3}), which proves the
  result.
\end{proof}

Proposition \ref{prop:rkdl} and Theorem \ref{thm:dual} motivate the
following question: for a matroid union $K=M^+\join N_0$ as in
Theorem~\ref{unionthm}, under what conditions is $K^*$ the matroid
union of $(M^*)_0$ and an extension of $N^*$?  Note that $K^*$ may not
be such a matroid union.  For example, let $M$ be the uniform matroid
$U_{3,4}$ on $\{a,a',b,b'\}$ and let $N$ be $U_{1,2}$ on $\{c,c'\}$.
Extend $M$ to $M^+$ on $\{a,a',b,b',c,c'\}$ by making $c$ and $c'$
parallel to each other and collinear with $a,a'$ and with $b,b'$.  The
non-spanning circuits of $M^+\join N_0$ are $\{a,a',b,b'\}$,
$\{a,a',c,c'\}$, and $\{b,b',c,c'\}$.  The non-spanning circuits of
$(M^+\join N_0)^*$ are $\{a,a'\}$, $\{b,b'\}$, and $\{c,c'\}$.  It is
easy to see that $(M^+\join N_0)^*$ is not the matroid union of
$(M^*)_0$ and an extension of $N^*$.  This example also shows that the
construction in Theorem \ref{unionthm} does not encompass that in
Theorem \ref{wedgethm} and vice versa.

The first assertion in the next result is a consequence of Lemma
\ref{lem:uniondel} and the definition of principal sums, while the
second part follows from the first and Theorem \ref{thm:dual}.

\begin{cor}\label{cor:prunionmin}
  If $A_0$ is a basis of $M|A$ and $X\subseteq (S-A_0)\cup T$, then
  $$(M,N;A,B)\del X = (M\del (X\cap S),N\del (X\cap T);A-X,B-X).$$
  Dually, if $B_0$ is a basis of $N^*|B$ and $X\subseteq S\cup
  (T-B_0)$, then $$(M,N;A,B)/X = (M/(X\cap S),N/(X\cap T);A-X,B-X).$$
\end{cor}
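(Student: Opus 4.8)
The plan is to follow the route indicated just before the statement: derive the first identity from the deletion half of Lemma~\ref{lem:uniondel} together with the rank formula for $M^+(A,B)$, and then obtain the contraction identity by dualizing via Theorem~\ref{thm:dual}. For the first identity, write $(M,N;A,B)=M^+(A,B)\vee N_0$ and apply Lemma~\ref{lem:uniondel} to get $(M,N;A,B)\del X=\bigl(M^+(A,B)\del X\bigr)\join\bigl(N_0\del X\bigr)$. The factor $N_0\del X$ is immediate: deletion distributes over direct sums, so $N_0\del X=(N\del(X\cap T))\oplus U_{0,S-X}$, which is $(N\del(X\cap T))_0$ with respect to the ground sets $S-X$ and $T-X$. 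Everything therefore reduces to the identity $M^+(A,B)\del X=(M\del(X\cap S))^+(A-X,B-X)$, which I would verify by comparing rank functions via Lemma~\ref{lem:PrincipalExtension}: for $X''\subseteq S-X$ and $Y''\subseteq T-X$, the rank of $X''\cup Y''$ on the left is $\min\{r_M(X''\cup A),\ r_M(X'')+|Y''\cap B|\}$ and on the right it is $\min\{r_M(X''\cup(A-X)),\ r_M(X'')+|Y''\cap(B-X)|\}$, using that $r_{M\del(X\cap S)}$ restricts to $r_M$ on subsets of $S-X$. Since $Y''\cap X=\emptyset$ we have $|Y''\cap B|=|Y''\cap(B-X)|$, so only the first arguments of the two minima need reconciling.

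That is the only substantive point, and it is exactly where the hypothesis is used. Because $X\cap S\subseteq S-A_0$, the basis $A_0$ of $M|A$ is disjoint from $X$, hence $A_0\subseteq A-X\subseteq X''\cup(A-X)$; as $A_0$ spans $A$ in $M$, this gives $\cl_M(X''\cup(A-X))\supseteq\cl_M(A_0)=\cl_M(A)\supseteq A$, so $r_M(X''\cup(A-X))=r_M(X''\cup A)$. Thus the two rank functions coincide, which establishes $M^+(A,B)\del X=(M\del(X\cap S))^+(A-X,B-X)$ and hence the first identity. (Alternatively one could argue from the iterated single-element definition of $M^+(A,B)$: deleting an element of $B\cap X$ undoes the corresponding principal extension, and deleting $X\cap S$ commutes with a principal extension on the flat $\cl_M(A)$ precisely because $A_0$ survives; the rank computation is simply the shortest route.)

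For the second identity I would use that contraction is dual to deletion together with Theorem~\ref{thm:dual}: $(M,N;A,B)/X=\bigl((M,N;A,B)^*\del X\bigr)^*=\bigl((N^*,M^*;B,A)\del X\bigr)^*$. Apply the first identity to the principal sum $(N^*,M^*;B,A)$, in which the first matroid $N^*$ lives on $T$, the second $M^*$ on $S$, and the distinguished subsets are $B\subseteq T$ and $A\subseteq S$; the hypothesis it requires --- a basis $B_0$ of $N^*|B$ with $X\subseteq(T-B_0)\cup S$ --- is precisely the hypothesis of the present statement, so $(N^*,M^*;B,A)\del X=(N^*\del(X\cap T),\,M^*\del(X\cap S);\,B-X,\,A-X)$. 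Dualizing once more by Theorem~\ref{thm:dual} and using $(M^*\del(X\cap S))^*=M/(X\cap S)$ and $(N^*\del(X\cap T))^*=N/(X\cap T)$ gives $(M/(X\cap S),\,N/(X\cap T);\,A-X,\,B-X)$, as required. The only thing needing care in this step is bookkeeping of which matroid and which subset play which role each time Theorem~\ref{thm:dual} interchanges the two factors.
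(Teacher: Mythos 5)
Your proof is correct and follows exactly the route the paper indicates: the deletion identity via Lemma \ref{lem:uniondel} together with the rank function of $M^+(A,B)$ (where the hypothesis on $A_0$ is used precisely as you say), and the contraction identity by dualizing with Theorem \ref{thm:dual}. The paper only sketches this in one sentence, and your write-up supplies the same argument with the details filled in.
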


If too many elements of the set $A$ are deleted, the result might not
be a principal sum, so the hypothesis in Corollary
\ref{cor:prunionmin} is needed.  For example, let $M$ be the parallel
connection, at the point $x$, of two copies of the uniform matroid
$U_{2,4}$, and $N$ be the uniform matroid $U_{1,3}$ on the set
$\{a,b,c\}$.  Set $P=(M,N;\{x\},\{a,b,c\})$.  The principal sum $P$ is
the parallel connection of three $4$-point lines at $x$.  The deletion
$P\del x$ consists of three disjoint $3$-point lines in rank $4$, each
pair of which is coplanar.  It is easy to see that while $P\del x$ is
a semidirect sum of $M\del x$ and $N$, it is not a principal sum of
these matroids.

We next treat the closure operator of a principal sum.  We use
$\mcR_\leq$ as in Lemma~\ref{lem:idealfilter}.

\begin{thm}\label{thm:prclosure}
  Let $P$ be $(M,N;A,B)$.  For $X\subseteq S$ and $Y\subseteq T$,
  $$\cl_P(X\cup Y)  = \left\{
    \begin{array}{ll}
      \cl_M(X\cup A)\cup \cl_N(Y),  & \text{if}\,\, X\cup Y\in \mcR_\leq,\\
      \cl_M(X)\cup \cl_{N\del B}(Y-B)\cup (Y\cap B), & \text{otherwise.}  
    \end{array}\right.  
  $$
\end{thm}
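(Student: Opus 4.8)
The plan is to determine, for each $z\in(S\cup T)\setminus(X\cup Y)$, whether $r_P((X\cup Y)\cup z)=r_P(X\cup Y)$ by evaluating the two-term rank formula of Theorem~\ref{thm:rank} at $X\cup Y$ and at $(X\cup Y)\cup z$; elements of $X\cup Y$ lie on both sides of the asserted equality automatically. The computation splits into the two cases $X\cup Y\in\mcR_\leq$ and $X\cup Y\in\mcR_>$ of the statement, and the only fact I will borrow from Lemma~\ref{lem:idealfilter} is that $\mcR_\leq$ is a filter.

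Suppose first $X\cup Y\in\mcR_\leq$, so $r_P(X\cup Y)=r_M(X\cup A)+r_N(Y)$ by Theorem~\ref{thm:rank}. Since $\mcR_\leq$ is a filter, $(X\cup Y)\cup z\in\mcR_\leq$ as well, so its rank is again given by expression~(\ref{ranka}): it equals $r_M(X\cup z\cup A)+r_N(Y)$ if $z\in S$, which matches $r_P(X\cup Y)$ precisely when $z\in\cl_M(X\cup A)$; and it equals $r_M(X\cup A)+r_N(Y\cup z)$ if $z\in T$, which matches precisely when $z\in\cl_N(Y)$. With $X\subseteq\cl_M(X\cup A)$ and $Y\subseteq\cl_N(Y)$, this yields $\cl_P(X\cup Y)=\cl_M(X\cup A)\cup\cl_N(Y)$.

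Now suppose $X\cup Y\in\mcR_>$, so $r_P(X\cup Y)=r_M(X)+r_N(Y-B)+|Y\cap B|$, which is strictly below expression~(\ref{ranka}) evaluated at $X\cup Y$. Adjoining $z$ cannot decrease expression~(\ref{ranka}), so~(\ref{ranka}) evaluated at $(X\cup Y)\cup z$ still strictly exceeds $r_P(X\cup Y)$; hence $z\in\cl_P(X\cup Y)$ if and only if expression~(\ref{rankb}) is unchanged under adjoining $z$. Evaluating~(\ref{rankb}): for $z\in S$ this happens iff $r_M(X\cup z)=r_M(X)$, i.e.\ $z\in\cl_M(X)$; for $z\in T-B$ it happens iff $r_N((Y-B)\cup z)=r_N(Y-B)$, i.e.\ $z\in\cl_N(Y-B)\cap(T-B)=\cl_{N\del B}(Y-B)$; and for $z\in B\setminus Y$ the term $|Y\cap B|$ increases by one, so no such $z$ is in the closure. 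Since the elements of $Y\cap B$ lie in $X\cup Y$ but in neither $\cl_M(X)$ nor $\cl_{N\del B}(Y-B)$, we obtain $\cl_P(X\cup Y)=\cl_M(X)\cup\cl_{N\del B}(Y-B)\cup(Y\cap B)$.

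The main obstacle is the case $X\cup Y\in\mcR_>$: because adjoining an element can flip which of the two terms in Theorem~\ref{thm:rank} realizes the minimum, one must first check that expression~(\ref{ranka}) stays strictly above expression~(\ref{rankb}), so that the rank of $(X\cup Y)\cup z$ is still governed by~(\ref{rankb}); and the elements of $B$ must be treated separately, the fact that a fresh element of $B$ adjoined to $Y$ raises $|Y\cap B|$ (and hence the rank) being precisely the reason such elements stay out of the closure — the geometric content of adding the elements of $B$ freely. The only other point to record is the restriction-closure identity $\cl_{N\del B}(Y-B)=\cl_N(Y-B)\cap(T-B)$ used to put the answer in the stated form.
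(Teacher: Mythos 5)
Your proof is correct and takes essentially the same approach as the paper's: both determine $\cl_P(X\cup Y)$ by checking single-element rank increments via the two-term formula of Theorem~\ref{thm:rank}, split into the cases $X\cup Y\in\mcR_\leq$ and $X\cup Y\in\mcR_>$. Your explicit appeals to the filter property of $\mcR_\leq$ (Lemma~\ref{lem:idealfilter}) and to the persistence of the strict inequality in the $\mcR_>$ case simply spell out what the paper leaves implicit in its observations (i)--(v).
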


\begin{proof}
  The proof uses the following easy observations.
  \begin{enumerate}
  \item[(i)] For $u\in S$, we have $r_M(X) = r_M(X\cup u)$ if and only
    if $u\in\cl_M(X)$.
  \item[(ii)] For $u\in S$, we have $r_M(X\cup A) = r_M(X\cup A\cup u)$ if
    and only if $u\in\cl_M(X\cup A)$.
  \item[(iii)] For $u\in T$, we have $r_N(Y) = r_N(Y\cup u)$ if and
    only if $u\in\cl_N(Y)$.
  \item[(iv)] If $u\in B-Y$, then $r_N((Y\cup u)-B)+|(Y\cup
    u)\cap B| >r_N(Y-B)+|Y\cap B|$.
  \item[(v)] For $u\in T-B$, the equality $$r_N((Y\cup u)-B)+|(Y\cup
    u)\cap B| = r_N(Y-B)+|Y\cap B|$$ holds if and only if $r_N((Y\cup
    u)-B) = r_N(Y-B)$, that is, $u\in \cl_N(Y-B)-B$.
  \end{enumerate}  
  Observations (ii) and (iii) apply in the first case; the others
  apply in the second.
\end{proof}

Note that if $A\subseteq X$, then $X\cup Y\in\mcR_\leq$.  This
observation is behind the next result, which identifies the flats of a
principal sum.

\begin{cor}\label{cor:prflats}
  The flats of $(M,N;A,B)$ are the following sets:
  \begin{enumerate}
  \item $F_M\cup F_N$ where $F_M\in\mcF(M)$, $A\subseteq F_M$, and
    $F_N\in\mcF(N)$; such flats are in $\mcR_\leq$;
  \item $F_M\cup Y$ where $F_M\in\mcF(M)$, $Y\subseteq T$, $Y-B\in\mcF(
    N\del B)$, and $F_M\cup Y\in\mcR_>$.
  \end{enumerate}
\end{cor}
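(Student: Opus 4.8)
The plan is to read the flats straight off the closure formula in Theorem~\ref{thm:prclosure}. A set $Z\subseteq S\cup T$ is a flat of $P=(M,N;A,B)$ exactly when $\cl_P(Z)=Z$; writing $Z=X\cup Y$ with $X=Z\cap S$ and $Y=Z\cap T$, I would split according to whether $Z\in\mcR_\leq$ or $Z\in\mcR_>$, since these two collections partition the subsets of $S\cup T$, and in each case determine when the displayed closure equals $Z$.

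First suppose $Z\in\mcR_\leq$, so $\cl_P(Z)=\cl_M(X\cup A)\cup\cl_N(Y)$ by Theorem~\ref{thm:prclosure}. The first piece lies in $S$ and the second in $T$, so $\cl_P(Z)=Z$ if and only if $\cl_M(X\cup A)=X$ and $\cl_N(Y)=Y$. The equality $\cl_M(X\cup A)=X$ forces $X\cup A\subseteq \cl_M(X\cup A)=X$, hence $A\subseteq X$, and makes $X$ an $M$-flat; conversely an $M$-flat $X$ with $A\subseteq X$ satisfies it since then $X\cup A=X$. Thus the flats of $P$ lying in $\mcR_\leq$ are precisely the sets $F_M\cup F_N$ with $F_M\in\mcF(M)$, $A\subseteq F_M$, and $F_N\in\mcF(N)$. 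For the reverse implication I would invoke the remark stated just before the corollary: when $A\subseteq F_M$ the set $F_M\cup F_N$ indeed lies in $\mcR_\leq$, so the first branch of the closure formula applies and fixes $F_M\cup F_N$. This gives item (1).

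Next suppose $Z\in\mcR_>$, so $\cl_P(Z)=\cl_M(X)\cup\cl_{N\del B}(Y-B)\cup(Y\cap B)$. These three pieces lie in the pairwise disjoint sets $S$, $T-B$, and $B$, and $Z$ decomposes the same way as $X\cup(Y-B)\cup(Y\cap B)$; hence $\cl_P(Z)=Z$ if and only if $\cl_M(X)=X$ and $\cl_{N\del B}(Y-B)=Y-B$, i.e. $X\in\mcF(M)$ and $Y-B\in\mcF(N\del B)$. Conversely any $F_M\cup Y$ meeting these conditions and lying in $\mcR_>$ is fixed by the second branch of the closure formula and so is a flat. This is item (2); since $\mcR_\leq$ and $\mcR_>$ are disjoint, the two items list every flat exactly once.

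The whole argument is bookkeeping once Theorem~\ref{thm:prclosure} is available; the only step needing a little care is the reverse direction of item (1), where one must confirm that the hypothesis $A\subseteq F_M$ places $F_M\cup F_N$ in $\mcR_\leq$ so that the correct branch of the closure formula is used. This is exactly the observation preceding the corollary, which follows from $r_M(F_M\cup A)=r_M(F_M)$ together with $r_N(F_N)\leq r_N(F_N-B)+|F_N\cap B|$.
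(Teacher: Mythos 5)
Your proposal is correct and follows the same route the paper intends: the corollary is read off as the fixed points of the closure formula in Theorem~\ref{thm:prclosure}, split along the partition into $\mcR_\leq$ and $\mcR_>$, with the observation that $A\subseteq F_M$ forces $F_M\cup F_N\in\mcR_\leq$ supplying the one nontrivial check. Your case analysis and the justification of that observation via $r_M(F_M\cup A)=r_M(F_M)$ and $r_N(F_N)\leq r_N(F_N-B)+|F_N\cap B|$ are exactly right.
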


We next identify the cyclic flats of principal sums.  Recall that a
set $X$ in a matroid $K$ is cyclic if it is a (possibly empty) union
of circuits; equivalently, $r_K(X-a)=r_K(X)$ for all $a\in X$, that
is, $K|X$ has no coloops.  We let $\mcZ(K)$ denote the set of cyclic
flats of $K$.

\begin{thm}\label{thm:prcyclicflats}
  Let $P$ be $(M,N;A,B)$.  The cyclic flats of $P$ are the sets
  \begin{enumerate}
  \item $Z_M\cup Z_N$ where $Z_M\in\mcZ(M)$, $A\not\subseteq Z_M$, and
    $Z_N\in\mcZ(N\del B)$,
  \item $F_M\cup Z_N$ where $F_M\in\mcF(M)$, $A\subseteq F_M$, all
    coloops of the restriction $M|F_M$ are in $A$, $Z_N\in\mcZ(N)$,
    and $Z_N\cap B\ne\emptyset$, and
  \item $Z_M\cup Z_N$ where $Z_M\in\mcZ(M)$, $A\subseteq Z_M$,
    $Z_N\in\mcZ(N)$, and $Z_N\cap B = \emptyset$.
  \end{enumerate}
\end{thm}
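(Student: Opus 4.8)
The plan is to intersect the description of the flats of $P=(M,N;A,B)$ given in Corollary~\ref{cor:prflats} with the property of being cyclic, i.e.\ of having no coloops in the induced restriction; so the first step is to characterize the coloops of an arbitrary principal sum. Rather than computing one-element deletions directly, I would dualize: by Theorem~\ref{thm:dual} the dual of a principal sum is a principal sum, coloops become loops, and loops are detected by the value of $r$ on singletons via Theorem~\ref{thm:rank}. A direct check with that rank formula shows that in a principal sum $(M',N';A',B')$, with $M'$ on $S'$ and $N'$ on $T'$, an element $f\in S'$ is a loop iff it is a loop of $M'$, an element $f\in T'-B'$ is a loop iff it is a loop of $N'$, and $f\in B'$ is a loop iff $f$ is a loop of $N'$ and \emph{every} element of $A'$ is a loop of $M'$. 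Dualizing, $e$ is a coloop of $P$ iff: $e\in S-A$ and $e$ is a coloop of $M$; or $e\in A$, $e$ is a coloop of $M$, and every element of $B$ is a coloop of $N$; or $e\in T$ and $e$ is a coloop of $N$.

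Next I would treat the two kinds of flats from Corollary~\ref{cor:prflats} separately. If $F=F_M\cup F_N$ with $A\subseteq F_M$ and $F_N\in\mcF(N)$, then $F_M$ contains a basis of $M|A$, so Corollary~\ref{cor:prunionmin} gives $P|F=(M|F_M,\,N|F_N;\,A,\,B\cap F_N)$, a principal sum of the restrictions. Feeding this into the coloop characterization, $F$ is cyclic iff $N|F_N$ has no coloops (so $F_N\in\mcZ(N)$), all coloops of $M|F_M$ are in $A$, and no element of $A$ is simultaneously a coloop of $M|F_M$ and such that $B\cap F_N$ consists of coloops of $N|F_N$. Since $F_N\in\mcZ(N)$ makes the last clause reduce to "$B\cap F_N=\emptyset$'', a two-way split on whether $B\cap F_N$ is empty yields exactly family~(2) in the nonempty case and family~(3) in the empty case (where the remaining conditions force $F_M\in\mcZ(M)$).

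If instead $F=F_M\cup Y$ is a flat with $F_M\cup Y\in\mcR_>$ — so $A\not\subseteq F_M$ — then Corollary~\ref{cor:prunionmin} is unavailable, and I would test directly whether $e\in\cl_P(F-e)$ for each $e\in F$, using Theorem~\ref{thm:prclosure}. The key point is that deleting one element from an $\mcR_>$-set keeps it in $\mcR_>$; this is immediate from inequalities (\ref{idealeqn1}) and (\ref{idealeqn2}) (the submodularity inequalities behind Lemma~\ref{lem:idealfilter}). Hence $F-e$ always lies in $\mcR_>$, so $\cl_P(F-e)$ is given by the second branch of Theorem~\ref{thm:prclosure}: the $M$-closure of the $S$-part, union the $(N\del B)$-closure of the non-$B$ portion of the $T$-part, union the $B$-portion of the $T$-part adjoined without new dependencies. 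Reading off $e\in\cl_P(F-e)$ in the cases $e\in F_M$, $e\in Y\cap B$, and $e\in Y-B$ gives: the $S$-side condition forces $F_M\in\mcZ(M)$; every element of $Y\cap B$ is necessarily a coloop of $P|F$, so $Y\cap B=\emptyset$; and then the $T$-side condition forces $Y\in\mcZ(N\del B)$. As $F_M$ is then a flat of $M$ and $Y\subseteq T-B$, membership in $\mcR_>$ collapses to $A\not\subseteq F_M$, giving family~(1). Finally the three families are pairwise disjoint — distinguished by whether $A$ lies in the $S$-part and, when it does, by whether the $T$-part meets $B$ — so the list has no repetitions.

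I expect the second kind of flat to be the main obstacle: one has to recognize that the "free'' elements in $Y\cap B$ act as coloops of $P|F$, forcing $Y\cap B=\emptyset$, and one must be sure that the $\mcR$-class of $F-e$ cannot slip from $\mcR_>$ down to $\mcR_=$ — precisely the role of inequalities (\ref{idealeqn1})–(\ref{idealeqn2}). A lesser hazard is the coloop characterization itself, where the clause "every element of $B$ is a coloop of $N$'' is easy to state incorrectly; routing it through Theorem~\ref{thm:dual} and loop-detection keeps it honest.
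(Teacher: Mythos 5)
Your proposal is correct, and its overall skeleton matches the paper's: both start from the list of flats in Corollary~\ref{cor:prflats} and decide which of them are cyclic, splitting into the $\mcR_>$ flats and the flats containing $A$ on the $S$-side. For the $\mcR_>$ flats your argument is essentially the paper's: the paper also uses the ideal property of $\mcR_>$ (so one-element deletions stay in $\mcR_>$ --- which makes the ``slippage to $\mcR_=$'' you flag as a hazard a non-issue, since it is literally the statement of Lemma~\ref{lem:idealfilter}) to see that elements of $Y\cap B$ are coloops of the restriction, and then checks $F_M\in\mcZ(M)$ and $Y\in\mcZ(N\del B)$ by comparing expression~(\ref{rankb}) on one-element deletions, where you instead read off the same conditions from Theorem~\ref{thm:prclosure}; the two are interchangeable. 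The genuine difference is in the flats $F_M\cup F_N$ with $A\subseteq F_M$: the paper first invokes Corollary~\ref{cor:cyclicsets} to force $F_N\in\mcZ(N)$, then splits into $\mcR_<$ versus $\mcR_=$ and compares expressions~(\ref{ranka}) and~(\ref{rankb}) on one-element deletions, whereas you observe via Corollary~\ref{cor:prunionmin} that $P|(F_M\cup F_N)$ is itself the principal sum $(M|F_M,\,N|F_N;\,A,\,B\cap F_N)$ and then apply a once-and-for-all coloop characterization of principal sums, obtained by dualizing with Theorem~\ref{thm:dual} and detecting loops from singleton ranks in Theorem~\ref{thm:rank}. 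I checked that coloop characterization against Theorem~\ref{thm:rank} and it is right (including the clause that $e\in A$ is a coloop of $P$ precisely when $e$ is a coloop of $M$ and every element of $B$ is a coloop of $N$), and your two-way split on whether $B\cap F_N$ is empty then reproduces families~(2) and~(3) exactly, with the converse inclusions coming for free since every step is an equivalence applied to sets already known to be flats. What your route buys is a reusable lemma (the coloops of a principal sum) and the elimination of the $\mcR_<$/$\mcR_=$/$\mcR_\leq$ bookkeeping in this case; what it costs is verifying that extra lemma and the appeal to Corollary~\ref{cor:prunionmin}, while the paper's route is a self-contained computation with the rank formula.
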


\begin{proof}
  First consider the flats $F_M\cup Y$ with $F_M\in\mcF(M)$,
  $Y\subseteq T$, $Y-B\in\mcF(N\del B)$, and $F_M\cup Y\in\mcR_>$.
  Since $\mcR_>$ is an ideal, it follows from Corollary
  \ref{cor:prflats} that $F_M\cup (Y-Y')$ is a flat of $P$ for all
  $Y'\subseteq Y\cap B$, so for $F_M\cup Y$ to be cyclic, we must have
  $Y\subseteq T-B$.  Since $F_M\cup Y\in\mcR_>$, we must have
  $A\not\subseteq F_M$; also, expression (\ref{rankb}) gives the rank
  of $F_M\cup Y$ as well as all of its subsets.  By comparing these
  expressions for $r_P(F_M\cup Y)$, $r_P((F_M-x)\cup Y)$ with $x\in
  F_M$, and $r_P(F_M\cup (Y-y))$ with $y\in Y$, it follows that
  $F_M\cup Y$ is cyclic in $P$ if and only if $F_M\in\mcZ(M)$ and
  $Y\in\mcZ(N\del B)$.  Thus, all such sets $F_M\cup Y$ that are in
  $\mcZ(P)$ are included in item (1) above.  Conversely, the flats
  described in item (1) are in $\mcR_>$ by Corollary
  \ref{cor:prflats}, and, using expression (\ref{rankb}), it is easy
  to check that they are cyclic flats of $P$.

  Now consider the flats $F_M\cup F_N$ with $F_M\in\mcF(M)$,
  $A\subseteq F_M$, and $F_N\in\mcF(N)$.  By Corollary
  \ref{cor:cyclicsets}, if $F_M\cup F_N$ is cyclic in $P$, then it is
  cyclic in $N_0$, so $F_N$ is cyclic in $N$; thus, we restrict our
  attention to flats $F_M\cup F_N$ with $F_N\in\mcZ(N)$.  Examining
  expressions (\ref{ranka}) and (\ref{rankb}) and using the facts that
  $A\subseteq F_M$ and $F_N\in\mcZ(N)$ shows that $F_M\cup
  F_N\in\mcR_<$ if and only if $F_N\cap B \ne \emptyset$, otherwise
  $F_M\cup F_N\in\mcR_=$.  Assume first that $F_M\cup F_N\in\mcR_<$,
  so $F_N\cap B \ne\emptyset$.  Note that for all $x\in F_M$ and $y\in
  F_N$, the sets $(F_M-x)\cup F_N$ and $F_M\cup (F_N-y)$ are in
  $\mcR_\leq$.  An examination of expression (\ref{ranka}) shows that
  $F_M\cup F_N$ is a cyclic flat of $P$ if and only if, besides having
  $F_N\in\mcZ(N)$, all coloops of $M|F_M$ are in $A$.  Now assume that
  $F_M\cup F_N\in\mcR_=$, so $F_N\cap B = \emptyset$.  Thus, for all
  $x\in F_M$ and $y\in F_N$, the sets $(F_M-x)\cup F_N$ and $F_M\cup
  (F_N-y)$ are in $\mcR_\geq$.  An examination of expression
  (\ref{rankb}) shows that $F_M\cup F_N$ is a cyclic flat of $P$ if
  and only if, besides having $F_N\in\mcZ(N)$, we have $F_M\in
  \mcZ(M)$.  Thus, items (2) and (3) describe all such cyclic flats of
  $P$.
\end{proof}

Flats are intersections of hyperplanes (or copoints), so complements
of flats are unions of cocircuits; thus, for a matroid $K$ on $E$, a
subset $X$ of $E$ is cyclic in $K$ if and only if $E-X\in\mcF(K^*)$.
Also, $X$ is a cyclic flat if and only if $X$ is a union of circuits
and $E-X$ is a union of cocircuits. Thus, $\mcZ(K^*) =
\{E-Z\,:\,Z\in\mcZ(K)\}$.  One can check that the sets described in
item (3) of Theorem \ref{thm:prcyclicflats} for $(M,N;A,B)$ are the
complements of those described in the same item for the dual,
$(N^*,M^*;B,A)$, while those described in item (1) for $(M,N;A,B)$ are
the complements of those described in item (2) for $(N^*,M^*;B,A)$,
and vice versa.

It follows from Theorem \ref{thm:prcyclicflats} that in the principal
sum $(M,N;A,B)$, any union or intersection of cyclic flats either
contains $A$ or is disjoint from $B$.  Therefore Theorem
\ref{thm:prcyclicflats} and Corollary \ref{cor:samerkasds} give the
following result.

\begin{cor}\label{cor:prsumtodsum}
  Let $P$ be $(M,N;A,B)$.  If $X$ is a union or intersection of cyclic
  flats of $P$, then $r_P(X) = r_{M\oplus N}(X)$.
\end{cor}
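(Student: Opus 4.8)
The plan is to combine the explicit list of cyclic flats of $P=(M,N;A,B)$ from Theorem~\ref{thm:prcyclicflats} with the rank formula in Corollary~\ref{cor:samerkasds}. The key intermediate fact, already announced in the paragraph preceding the corollary, is that every cyclic flat of $P$ either contains $A$ or is disjoint from $B$, and that this property passes to unions and intersections.

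First I would verify the key fact case by case. A cyclic flat of type~(1) has the form $Z_M\cup Z_N$ with $Z_M\in\mcZ(M)$ and $Z_N\in\mcZ(N\del B)$; since $M$ is a matroid on $S$ we have $Z_M\subseteq S$, and since $N\del B$ is a matroid on $T-B$ we have $Z_N\subseteq T-B$, so $Z_M\cup Z_N\subseteq S\cup(T-B)$ is disjoint from $B$. A cyclic flat of type~(2) has the form $F_M\cup Z_N$ with $A\subseteq F_M$, so it contains $A$; likewise a cyclic flat of type~(3) has the form $Z_M\cup Z_N$ with $A\subseteq Z_M$, so it contains $A$. Thus every cyclic flat of $P$ contains $A$ or is disjoint from $B$.

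Next I would note that the family of subsets of $S\cup T$ that contain $A$ or are disjoint from $B$ is closed under unions and intersections: for $X=\bigcup_i Z_i$, either some $Z_i$ contains $A$ and then $A\subseteq X$, or no $Z_i$ contains $A$, so each $Z_i$ is disjoint from $B$ and then $X\cap B=\emptyset$; for $X=\bigcap_i Z_i$, either some $Z_i$ is disjoint from $B$ and then $X\cap B=\emptyset$, or every $Z_i$ contains $A$ and then $A\subseteq X$. Hence if $X$ is a union or an intersection of cyclic flats of $P$, then $A\subseteq X$ or $X\cap B=\emptyset$. Writing $X=(X\cap S)\cup(X\cap T)$, in the first case $A\subseteq X\cap S$ and in the second $(X\cap T)\cap B=\emptyset$, so Corollary~\ref{cor:samerkasds} applies and gives $r_P(X)=r_M(X\cap S)+r_N(X\cap T)=r_{M\oplus N}(X)$, as required.

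There is no serious obstacle: the only step needing attention is the case analysis in the second paragraph, and it reduces to the observation that cyclic flats of $M$, of $N$, and of $N\del B$ live in $S$, $T$, and $T-B$ respectively. The corollary is thus a short bookkeeping consequence of Theorem~\ref{thm:prcyclicflats} and Corollary~\ref{cor:samerkasds}.
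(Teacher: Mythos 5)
Your proof is correct and follows exactly the paper's route: the paper also deduces from Theorem~\ref{thm:prcyclicflats} that every union or intersection of cyclic flats of $P$ contains $A$ or misses $B$, and then invokes Corollary~\ref{cor:samerkasds}. Your write-up merely makes explicit the case check and the closure under unions and intersections that the paper leaves to the reader.
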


The principal sums in the next corollary are notable in part because
of how Theorem \ref{thm:prcyclicflats} simplifies in this case.

\begin{cor}\label{cor:cycliccase}
  For $A\in\mcZ(M)$ and $B\in\mcZ(N^*)$, the cyclic flats of
  $(M,N;A,B)$ are the sets $Z_M\cup Z_N$ where $Z_M\in\mcZ(M)$,
  $Z_N\in\mcZ(N)$, and either $A\subseteq Z_M$ or $Z_N\cap B =
  \emptyset$.
\end{cor}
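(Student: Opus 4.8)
The plan is to start from the description of $\mcZ(P)$ given in Theorem~\ref{thm:prcyclicflats}, where $P=(M,N;A,B)$, and to show that each of its three cases collapses to the uniform form in the corollary once we invoke $A\in\mcZ(M)$ and $B\in\mcZ(N^*)$. The only fact about the hypothesis on $B$ that I will need is that $\mcZ(K^*)=\{E-Z:Z\in\mcZ(K)\}$ for any matroid $K$ on $E$ (recorded just before Corollary~\ref{cor:prsumtodsum}); applied to $N$, this says that $B\in\mcZ(N^*)$ is equivalent to $T-B\in\mcZ(N)$, and in particular $T-B$ is a flat of $N$.

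First I would simplify item~(1) of Theorem~\ref{thm:prcyclicflats} by computing $\mcZ(N\del B)$. Since $N\del B=N|(T-B)$ and $T-B$ is a flat of $N$, the flats of $N\del B$ are exactly the flats of $N$ contained in $T-B$, and the circuits of $N\del B$ are exactly the circuits of $N$ contained in $T-B$; hence $\mcZ(N\del B)=\{Z_N\in\mcZ(N):Z_N\subseteq T-B\}=\{Z_N\in\mcZ(N):Z_N\cap B=\emptyset\}$. Since $Z_M\subseteq S$ and $Z_N\subseteq T$ determine $Z_M\cup Z_N$, item~(1) thus describes exactly the sets $Z_M\cup Z_N$ with $Z_M\in\mcZ(M)$, $A\not\subseteq Z_M$, $Z_N\in\mcZ(N)$, and $Z_N\cap B=\emptyset$.

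Next I would simplify item~(2) using $A\in\mcZ(M)$, via the claim: for a flat $F_M$ of $M$ with $A\subseteq F_M$, every coloop of $M|F_M$ lies in $A$ if and only if $F_M\in\mcZ(M)$. The reverse direction is immediate, since a cyclic set has no coloops at all. For the forward direction, let $L$ be the set of coloops of $M|F_M$, so $L\subseteq A$ by hypothesis. Since $A$ is cyclic in $M$, it is a union of $M$-circuits, each contained in $A\subseteq F_M$ and hence a circuit of $M|F_M$; so $A$ is cyclic in $M|F_M$ and therefore contains no coloop of $M|F_M$, i.e.\ $A\cap L=\emptyset$. With $L\subseteq A$ this forces $L=\emptyset$, so $F_M\in\mcZ(M)$. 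Consequently item~(2) describes exactly the sets $Z_M\cup Z_N$ with $Z_M\in\mcZ(M)$, $A\subseteq Z_M$, $Z_N\in\mcZ(N)$, and $Z_N\cap B\ne\emptyset$, while item~(3) is already in the desired form. Assembling the three reduced cases, they are governed by the two independent conditions $A\subseteq Z_M$ and $Z_N\cap B=\emptyset$: item~(3) handles both holding, item~(2) handles only the first, item~(1) handles only the second, and nothing is counted when both fail; so $\mcZ(P)$ consists precisely of the sets $Z_M\cup Z_N$ with $Z_M\in\mcZ(M)$, $Z_N\in\mcZ(N)$, and $A\subseteq Z_M$ or $Z_N\cap B=\emptyset$, which is the statement.

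I expect the one step that is not bookkeeping to be the coloop claim used for item~(2): the content is recognizing that $A$ being \emph{cyclic} in $M$ (not merely a flat) is exactly the leverage that upgrades ``all coloops of $M|F_M$ lie in $A$'' to ``$M|F_M$ has no coloops,'' through the observation that a cyclic subset of a flat is cyclic in the corresponding restriction. The identification of $\mcZ(N\del B)$ for item~(1) is similar in spirit but lighter, relying only on $T-B$ being a flat of $N$.
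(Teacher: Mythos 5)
Your proposal is correct and follows exactly the route the paper intends: the corollary is stated as a direct specialization of Theorem~\ref{thm:prcyclicflats}, and your reductions of its three items (identifying $\mcZ(N\del B)$ with the cyclic flats of $N$ disjoint from $B$ via $T-B\in\mcF(N)$, and upgrading ``all coloops of $M|F_M$ lie in $A$'' to $F_M\in\mcZ(M)$ via the cyclicity of $A$) are precisely the simplifications the hypotheses $A\in\mcZ(M)$ and $B\in\mcZ(N^*)$ are meant to produce. The bookkeeping that merges the three cases into the condition ``$A\subseteq Z_M$ or $Z_N\cap B=\emptyset$'' is also correct.
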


The next result describes the circuits of principal sums.

\begin{thm}\label{thm:Ncircuits1}
  A subset $D$ of $S \cup T$ is an $(M,N;A,B)$-circuit if and only if
  one of the following conditions holds:
  \begin{enumerate}
  \item $D\in\mcC(M)$,
  \item $D\in\mcC(N)$ and $D\subseteq T - B$, or
  \item $D = X\cup Y$ where
    \begin{enumerate}
    \item $X\in\mcI(M)$ and no element of $X-A$ is a coloop of
      $M|X\cup A $,
    \item $Y$ is a cyclic set of $N$ and $Y-B\in\mcI(N)$, and
    \item $|X|+|Y|-1 = r_M(X\cup A)+r_N(Y)$.
    \end{enumerate}
  \end{enumerate}
\end{thm}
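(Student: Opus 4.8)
The plan is to characterize the circuits of $P=(M,N;A,B)$ by combining two ingredients: the description of independent sets in Lemma~\ref{lem:SDPindependent} and the matroid-union circuit analysis in Lemma~\ref{lem:Ncircuits}, using the fact that $P=M^+(A,B)\vee N_0$. First I would establish the easy directions. For~(1), every $M$-circuit $D$ is an $M^+(A,B)$-circuit (since $M^+(A,B)|S=M$ and, by Lemma~\ref{lem:PrincipalExtension}, a subset of $S$ is $M^+(A,B)$-independent iff it is $M$-independent), hence a circuit of $P$ by Lemma~\ref{lem:quotient} applied via Corollary~\ref{cor:cyclicsets}: an $M$-circuit is a dependent set all of whose proper subsets are independent in $P$, because subsets of $S$ have the same rank in $P$ and in $M\oplus N$ by Corollary~\ref{cor:samerkasds}. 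For~(2), an $N$-circuit contained in $T-B$ is an $N_0$-circuit and, since $T-B$ consists of loops of $M^+(A,B)$, Lemma~\ref{lem:uniondel}-style reasoning (or direct use of Corollary~\ref{cor:samerkasds}, which gives $r_P=r_{M\oplus N}$ on subsets of $T-B$) shows it is a $P$-circuit.

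The substantive case is~(3), where $D$ meets both $S$ and $T$ (possibly $Y\cap B\neq\emptyset$). Here I would write $D=X\cup Y$ with $X=D\cap S$, $Y=D\cap T$ and analyze when $D$ is dependent but every proper subset is independent, using the rank formula of Theorem~\ref{thm:rank}: $D$ is dependent iff $|X|+|Y|>\min\{r_M(X\cup A)+r_N(Y),\,r_M(X)+r_N(Y-B)+|Y\cap B|\}$, and circuithood forces $|X|+|Y|-1$ to equal $r_P(D)$. The key reduction is to show that for a circuit the minimum in Theorem~\ref{thm:rank} is attained by expression~(\ref{ranka}): if instead~(\ref{rankb}) were the (strictly smaller) value, then removing a suitable element of $Y\cap B$ would not drop the $P$-rank enough, contradicting minimality of $D$ — more precisely, using Lemma~\ref{lem:idealfilter} ($\mcR_>$ is an ideal) one sees that if $D\in\mcR_>$ then $D$ has a proper subset that is still dependent unless $Y\cap B=\emptyset$, in which case expressions~(\ref{ranka}) and~(\ref{rankb}) differ only through the $r_M$-terms and a parallel argument on $Y$ applies. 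So the condition $|X|+|Y|-1=r_M(X\cup A)+r_N(Y)$ of~(3c) is forced, and conversely it makes $D$ dependent. Then, granting~(3c), minimality under deletion of elements of $X$ translates exactly into condition~(3a): deleting $x\in X\cap A$ keeps $X\cup A$ unchanged and $|X|$ drops, so independence of $(D-x)$ is automatic and imposes nothing, while deleting $x\in X-A$ keeps $D-x$ dependent iff $r_M((X-x)\cup A)=r_M(X\cup A)$, i.e. iff $x$ is a coloop of $M|(X\cup A)$ — so we need no such $x$, which is~(3a). Symmetrically, minimality under deletion of $y\in Y$ gives~(3b): deleting $y$ with $r_N(Y-y)=r_N(Y)$ must leave $D-y$ independent, forcing $Y$ to be cyclic in $N$; and we additionally need $Y-B\in\mcI(N)$ so that $Y$ contributes a genuine $N_0$-independent "tail" — this last point comes from requiring that no circuit of $N$ sits inside $Y\cap(T-B)$, which case~(2) would already have caught, equivalently from Lemma~\ref{lem:SDPindependent} applied to $D-y$ for $y\in X$.

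I would then verify the converse: given $X,Y$ satisfying~(3a)--(3c), Lemma~\ref{lem:SDPindependent} (with the rank formula) shows each single-element deletion of $D$ is independent — deleting from $X$ uses~(3a) to recover $|X|-1\le r_M((X-x)\cup A)-\dots$, deleting from $Y\cap B$ uses that $Y-B\in\mcI(N)$ and the slack created, deleting from $Y-B$ uses that $Y$ is cyclic so $r_N(Y-y)=r_N(Y)$ — while~(3c) makes $D$ itself dependent; hence $D$ is a circuit. The main obstacle I anticipate is the bookkeeping in case~(3) when $Y\cap B\neq\emptyset$: one must carefully track which of the two expressions in Theorem~\ref{thm:rank} governs $r_P$ on $D$ and on each $D-z$, and rule out "mixed" circuits that straddle $\mcR_<$ and $\mcR_>$; the ideal/filter structure from Lemma~\ref{lem:idealfilter} is the right tool, since it guarantees that along any chain of deletions one does not oscillate between the two regimes, so the argument localizes to a single rank expression at a time.
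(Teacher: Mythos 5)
Your route is genuinely different from the paper's. The paper never invokes Lemma \ref{lem:idealfilter} or a case analysis of which expression in Theorem \ref{thm:rank} attains the minimum: it works at the level of the matroid union, using Lemma \ref{lem:PrincipalExtension} to split a putative independent set into a part $X\cup Y_0$ independent in $M^+(A,B)$ and a part $Y_1$ independent in $N_0$, which gives dependence under (3c) directly, and in the converse it extracts the equality (3c) by playing the resulting inequality $|X\cup Y_0|>r_M(X\cup A)$ against $|X|+|Y-a|\leq r_M(X\cup A)+r_N(Y)$ for $a\in Y$. Your plan instead runs everything through the rank formula of Theorem \ref{thm:rank} plus the ideal/filter structure. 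That is viable: the sufficiency of (3a)--(3c) becomes a short rank check (for each one-element deletion both expressions must be at least $|D|-1$; condition (3a) and cyclicity of $Y$ control expression (\ref{ranka}), while $X\in\mcI(M)$ and $Y-B\in\mcI(N)$ control expression (\ref{rankb})), and your $\mcR_>$ analysis correctly shows that a circuit lying in $\mcR_>$ must have $Y\cap B=\emptyset$, since the ideal property forces the rank of every subset of $D$ to be given by (\ref{rankb}), making each element of $Y\cap B$ a coloop of $P|D$.

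Two points need repair. First, your case framing has a hole: you reserve the substantive case for circuits meeting both $S$ and $T$, but there are type-(3) circuits with $X=\emptyset$ that meet $B$ and so are covered by neither (1) nor (2); for instance, with $M=U_{1,1}$ on $\{s\}$, $A=\{s\}$, $N=U_{1,2}\oplus U_{1,2}$, and $B$ a transversal of the two parallel pairs, all of $T$ is such a circuit. Your arguments never actually use $X\neq\emptyset$, so the fix is simply to drop that restriction; relatedly, once $Y\cap B=\emptyset$ and $D\in\mcR_>$, the correct conclusion is not that (3c) is ``forced'' but that Corollary \ref{cor:samerkasds} gives $P|D=(M|X)\oplus(N|Y)$, so the circuit lies in one side and falls under (1) or (2) --- spell this out in place of the vague ``parallel argument on $Y$.'' Second, in your necessity argument for (3a) the stated equivalences are reversed: $r_M((X-x)\cup A)=r_M(X\cup A)$ holds precisely when $x$ is \emph{not} a coloop of $M|(X\cup A)$, and $D-x$ stays dependent precisely when this rank \emph{drops}, i.e., when $x$ \emph{is} a coloop (then expression (\ref{ranka}) bounds $r_P(D-x)$ by $|D-x|-1$). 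The two reversals cancel, so your conclusion is right, but the step as written is wrong; you should also record explicitly that $X\in\mcI(M)$ for a circuit not of type (1), by the same properly-contained-subcircuit argument you give for $Y-B$.
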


\begin{proof}
  Let $P$ be $(M,N;A,B)$.  Since $P|S=M$ and $P|(T-B)=N|(T-B)$, sets
  that satisfy either condition (1) or (2) are circuits of $P$.  Now
  assume that condition (3) holds for $X\cup Y$.  We first show that
  condition (c) alone implies that $X\cup Y$ is dependent in $P$.  If,
  to the contrary, $X\cup Y\in\mcI(P)$, then there would be subsets
  $Y_0$ and $Y_1=Y-Y_0$ of $Y$ with $X\cup Y_0$ independent in
  $M^+(A,B)$ and $Y_1$ independent in $N_0$, and so in $N$.  Thus,
  $|X\cup Y_0|\leq r_M(X\cup A)$ by Lemma
  \ref{lem:PrincipalExtension}; also, $|Y_1|\leq r_N(Y)$; adding these
  two inequalities gives a contradiction to condition (c), so $X\cup
  Y$ is dependent in $P$.  We now show that $(X\cup Y)-a\in\mcI(P)$
  for each element $a$ in $X\cup Y$.  First consider $a\in X$.  Since
  $Y-B\in\mcI(N)$, it is contained in a basis $Y_1$ of $N|Y$.  Let
  $Y_0=Y-Y_1$.  Rewrite condition (c) in terms of $Y_0$ and $Y_1$, use
  the equality $r_M((X-a)\cup A) = r_M(X\cup A)$ (which holds since,
  if $a\not\in A$, it is not a coloop of $M|X\cup A$), and simplify:
  we have $$|X-a|+|Y_0|=r_M((X-a)\cup A).$$ Thus, $(X-a)\cup Y_0 \in
  \mcI(M^+(A,B))$ by Lemma \ref{lem:PrincipalExtension}, so $(X-a)\cup
  Y\in\mcI(P)$.  Essentially the same argument applies for $a\in Y$
  (whether $a$ is in the independent set $Y-B$ or in $Y\cap B)$, thus
  completing the proof that the sets given in items (1)--(3) are
  circuits of $P$.

  For the converse, assume that $X\cup Y$ is a circuit of $P$ with
  $X\subseteq S$ and $Y\subseteq T$.  If $Y =\emptyset$, then, since
  $P|S=M$, we have $X\in\mcC(M)$, as described in condition (1).
  Similarly, if $X = \emptyset$ and $Y\subseteq T-B$, then, since
  $P|(T-B)=N|(T-B)$, such circuits are accounted for in condition (2).
  Thus, we may assume that $X\in\mcI(M)$ and $Y-B\in\mcI(N)$.  Extend
  $Y-B$ to a basis $Y_1$ of $N|Y$ and set $Y_0 = Y-Y_1$.  Since $X\cup
  Y$ is dependent in $P$, it must be that $X\cup Y_0$ is dependent in
  $M^+(A,B)$, that is, $|X\cup Y_0|>r_M(X\cup A)$; it follows that
  $|X|+|Y|> r_M(X\cup A)+r_N(Y)$.  However, if $a\in Y$, then $X\cup
  (Y-a)\in\mcI(P)$, which, by Theorem \ref{thm:rank}, gives
  $|X|+|Y-a|\leq r_M(X\cup A)+r_N(Y)$.  The last two inequalities give
  the equality in condition (c).  To show that condition (3) applies,
  note that if some element of $X-A$ were a coloop of $M|X\cup A$, or
  if $Y$ were not cyclic in $N$, then a proper subset of $X\cup Y$
  would satisfy condition (c) and so be dependent, which contradicts
  $X\cup Y$ being a circuit.  Thus, condition (3) applies.
\end{proof}

Using duality or arguments similar to those used above, we can
identify the cyclic sets of principal sums as in the next result.

\begin{thm}\label{thm:cyclicsetsPE} 
  For $X\subseteq S$ and $Y\subseteq T$, the set $X\cup Y$ is cyclic
  in the principal sum $(M,N;A,B)$ if and only if either
  \begin{enumerate}
  \item $X$ is cyclic in $M$, $Y$ is cyclic in $N$, and $Y\cap B =
    \emptyset$, or
  \item the only coloops of $M|X\cup A$ are in $A$, $Y$ is cyclic in
    $N$, and $X\cup Y\in \mcR_<$.
  \end{enumerate}
\end{thm}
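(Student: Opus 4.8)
The plan is to characterize cyclic sets of $P=(M,N;A,B)$ by working with the dual, using Theorem~\ref{thm:dual} together with the already-established description of the flats of a principal sum (Corollary~\ref{cor:prflats}). Recall that $X\cup Y$ is cyclic in $P$ if and only if its complement $(S-X)\cup(T-Y)$ is a flat of $P^*=(N^*,M^*;B,A)$. Writing $X'=T-Y\subseteq T$ (playing the role of the ``$S$-part'' for $P^*$, whose ground set has $T$ first) and $Y'=S-X\subseteq S$, Corollary~\ref{cor:prflats} applied to $(N^*,M^*;B,A)$ says that $X'\cup Y'$ is a flat of $P^*$ in one of two forms: either $X'\cup Y'=F_{N^*}\cup F_{M^*}$ with $F_{N^*}\in\mcF(N^*)$, $B\subseteq F_{N^*}$, $F_{M^*}\in\mcF(M^*)$ (and this flat lies in the $\mcR_\leq$ region for $P^*$), or $X'\cup Y'=F_{N^*}\cup Z$ with $F_{N^*}\in\mcF(N^*)$, $Z\subseteq S$, $Z-A\in\mcF(M^*\del A)$, and the set in $\mcR_>$ for $P^*$.

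The second step is to translate each of these two cases back through complementation into statements about $M$, $N$, $A$, $B$. In the first case, $T-Y=F_{N^*}\supseteq B$ means $Y\subseteq T-B$ and $Y=T-F_{N^*}$ is a union of cocircuits of $N^*$, i.e.\ cyclic in $N$; and $S-X=F_{M^*}\in\mcF(M^*)$ means $X=S-F_{M^*}$ is cyclic in $M$. The condition that $X'\cup Y'\in\mcR_\leq$ for $P^*$ should, via inequality (\ref{idealeqn1})/(\ref{idealeqn2}) and the connection between ranks of $N$ and $N^*$, become precisely that $X\cup Y\in\mcR_\geq$ for $P$ — but combined with $Y\cap B=\emptyset$ this is automatic, giving exactly item~(1) of the theorem. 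In the second case, $Z-A\in\mcF(M^*\del A)$ with $Z=S-X$ translates (complementing within $S$, and using that flats of $M^*\del A$ correspond to cyclic sets of $(M/A)$, hence to the condition that the only coloops of $M|(X\cup A)$ lie in $A$) to condition~(a)-type statement: the only coloops of $M|(X\cup A)$ are in $A$. The requirement $F_{N^*}\in\mcF(N^*)$ with no constraint relating to $B$ becomes $Y$ cyclic in $N$, and the membership of $X'\cup Y'$ in $\mcR_>$ for $P^*$ becomes $X\cup Y\in\mcR_<$ for $P$, yielding item~(2).

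The third step is to verify the converse direction directly, or simply to observe that the translation in step two is a bijection between the two families of $P^*$-flats and the two families in the theorem statement, so that no separate converse argument is needed. Alternatively, as the statement hints, one can avoid duality and argue directly: for a set $X\cup Y$ with $X\subseteq S$, $Y\subseteq T$, use Theorem~\ref{thm:rank} to compute the rank of $X\cup Y$ and of each single-element deletion, and check when no element is a coloop; this is the same bookkeeping that appeared in the proof of Theorem~\ref{thm:prcyclicflats}, splitting on whether $X\cup Y\in\mcR_<$, $\mcR_=$, or $\mcR_>$. In the $\mcR_<$ region only formula (\ref{ranka}) governs all the relevant subsets, and the no-coloop condition on each of $X$ and $Y$ reduces to: $M|(X\cup A)$ has coloops only in $A$, and $Y$ cyclic in $N$; in the $\mcR_\geq$ region formula (\ref{rankb}) governs, forcing $Y\cap B=\emptyset$ (else deleting an element of $Y\cap B$ drops the rank, making that element a coloop) and then $X$ cyclic in $M$, $Y$ cyclic in $N\del B=N$.

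The main obstacle I expect is the careful handling of the boundary region $\mcR_=$: a set in $\mcR_=$ may have some deletions land in $\mcR_<$ and others in $\mcR_\geq$, so one must check that the rank formula that actually computes $r_P$ of each deletion is used, not the ``wrong'' branch; this is exactly the delicacy that made the cyclic-flats proof intricate. Getting the translation of the condition ``$Z-A\in\mcF(M^*\del A)$'' into ``only coloops of $M|(X\cup A)$ are in $A$'' also requires a small but careful argument relating flats of $M^*\del A$, cyclic sets of $M$ relative to $A$, and the contraction/deletion identities; handling it cleanly via $\mcZ((M^*\del A)^*)=\mcZ((M/A)\oplus\text{loops})$ and complementation is probably the neatest route.
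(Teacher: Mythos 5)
Your proposal is correct and follows essentially the route the paper itself indicates: the paper gives no written-out argument for Theorem~\ref{thm:cyclicsetsPE}, saying only that it follows ``using duality or arguments similar to those used above,'' and your duality argument (cyclic sets of $(M,N;A,B)$ are complements of flats of $(N^*,M^*;B,A)$, translated via Corollary~\ref{cor:prflats}, with $\mcR_>$ for the dual corresponding to $\mcR_<$ for $P$ by the identities in the proof of Theorem~\ref{thm:dual}) is exactly that, and your key translations, including ``$S-(X\cup A)\in\mcF(M^*\del A)$ iff the only coloops of $M|(X\cup A)$ lie in $A$,'' check out. Since Corollary~\ref{cor:prflats} is an exact description of the flats, your observation that the translation is a biconditional in each case does indeed dispose of the converse, so no separate argument (and none of the $\mcR_=$ bookkeeping you worried about in the direct route) is needed.
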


We next determine which principal sums are disconnected.  Recall that
the notion of a separator $X$ of a matroid $K$ on $E$ has many
equivalent formulations, including (i) $X$ is a union of components of
$K$, (ii) $r_K(X)+r_K(E-X) = r(K)$, (iii) $K\del X =K/X$, and (iv)
$K=K|X\oplus K\del X$.  Clearly a subset $X$ of $E$ is a separator of
$K$ if and only if $E-X$ is.  It follows that the only semidirect sum
of $M$ and $N$ for which $T$ (equivalently, $S$) is a separator is the
direct sum, $M\oplus N$.  By Theorem \ref{thm:rank}, the rank of $T$
in $(M,N;A,B)$ is the minimum of $r_M(A)+r(N)$ and $r_N(T-B)+|B|$, so,
using the second formulation of the notion of a separator, we get the
following result.

\begin{lemma}\label{lem:chardirsum}
  For a principal sum $(M,N;A,B)$, statements \emph{(1)--(3)} are
  equivalent:
  \begin{enumerate}
  \item $T$ is a separator of $(M,N;A,B)$,
  \item $(M,N;A,B) = M\oplus N$,
  \item either $A$ is a (possibly empty) set of loops of $M$ or $B$ is
    a (possibly empty) set of coloops of $N$.
  \end{enumerate}
\end{lemma}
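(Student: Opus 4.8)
The plan is to obtain the equivalence of (1) and (2) essentially for free from the remark recorded just before the lemma --- that among all semidirect sums of $M$ and $N$, only the direct sum has $T$ (equivalently, $S$) as a separator --- and then to prove that (1) and (3) are equivalent by a direct computation of $r_P(T)$, where $P = (M,N;A,B)$. For completeness I would spell out the short argument behind (1)$\Leftrightarrow$(2): if $P = M\oplus N$ then $T$ is plainly a separator; conversely, if $T$ is a separator of the semidirect sum $P$, then $P = (P|S)\oplus(P|T)$, so $S$ is a separator as well, whence $P|T = P/S = N$ while $P|S = M$, giving $P = M\oplus N$.

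For (1)$\Leftrightarrow$(3) I would first reduce to a rank inequality. By the rank reformulation of ``separator'', $T$ is a separator of $P$ if and only if $r_P(T) + r_P(S) = r(P)$. Here $r_P(S) = r(M)$ since $P|S = M$, and $r(P) = r(M) + r(N)$ by Proposition~\ref{prop:rkdl} since $P$ is a semidirect sum of $M$ and $N$; hence $T$ is a separator precisely when $r_P(T) = r(N)$. Moreover, subadditivity of the rank function gives $r_P(T) \ge r(P) - r_P(S) = r(N)$ in all cases, so it suffices to determine when $r_P(T) \le r(N)$.

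Next I would read off $r_P(T)$ from Theorem~\ref{thm:rank} with $X = \emptyset$ and $Y = T$, obtaining $r_P(T) = \min\{r_M(A) + r(N),\ r_N(T - B) + |B|\}$. Thus $r_P(T) \le r(N)$ holds if and only if $r_M(A) + r(N) \le r(N)$ or $r_N(T - B) + |B| \le r(N)$; that is, if and only if $r_M(A) = 0$, or $r_N(T - B) + |B| = r_N(T)$ (the latter being forced to be an equality because $r_N(T) \le r_N(T - B) + |B|$ always holds). Then I would translate these two conditions: $r_M(A) = 0$ says exactly that $A$ is a (possibly empty) set of loops of $M$; and $r_N(T - B) + |B| = r_N(T)$ holds exactly when every element of $B$ is a coloop of $N$ --- if each $b \in B$ is a coloop, then deleting the elements of $B$ one at a time drops the rank by $1$ each time, so $r_N(T - B) = r_N(T) - |B|$; conversely, from $r_N(T - B) = r_N(T) - |B|$ and $r_N(T - b) \le r_N(T - B) + (|B| - 1) = r_N(T) - 1$ for each $b \in B$, together with the fact that a single deletion lowers the rank by at most $1$, we conclude that $r_N(T - b) = r_N(T) - 1$, i.e.\ that $b$ is a coloop.

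This establishes (1)$\Leftrightarrow$(3) and completes the argument. The only mildly delicate point is the last translation, converting the rank equation $r_N(T - B) + |B| = r_N(T)$ into the statement that $B$ consists of coloops of $N$; everything else is a routine application of the rank formula of Theorem~\ref{thm:rank} together with the standard reformulations of the notion of a separator.
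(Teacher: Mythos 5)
Your proposal is correct and follows essentially the same route as the paper: the paper derives the lemma from the remark that the direct sum is the only semidirect sum with $T$ as a separator, together with the computation $r_P(T)=\min\{r_M(A)+r(N),\,r_N(T-B)+|B|\}$ from Theorem~\ref{thm:rank} and the rank formulation of separators. You merely spell out the routine details (including the translation of $r_N(T-B)+|B|=r_N(T)$ into $B$ consisting of coloops) that the paper leaves implicit.
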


We now treat connectivity for all principal sums.

\begin{thm}
  Let $P$ be $(M,N;A,B)$.  The principal sum $P$ is disconnected if
  and only if at least one of the following conditions holds:
  \begin{enumerate}
  \item $P= M\oplus N$, 
  \item either $M$ has loops or $N$ has coloops,
  \item $A\ne \emptyset$ and $M$ has a separator $X$ with $A\subseteq
    X\subsetneq S$, or
  \item $B\ne \emptyset$ and $N$ has a nonempty separator $Y$ that is
    disjoint from $B$.
  \end{enumerate}
\end{thm}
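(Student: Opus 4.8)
The plan is to show that $P = (M,N;A,B)$ is disconnected if and only if one of (1)--(4) holds, by separately handling the two directions. For the ``if'' direction, I would verify that each of (1)--(4) forces $P$ to be disconnected. Condition (1) is immediate. For (2), a loop of $M$ or a coloop of $N$ is still a loop or coloop of $P$ (since $P|S = M$, $P/S = N$, and restriction/contraction preserve loops/coloops respectively, with the added fact that adding a loop of $N$ back via the semidirect-sum structure keeps it a loop of $P$), hence a one-element separator; one should double-check using Theorem~\ref{thm:rank} or Theorem~\ref{thm:prcyclicflats} that a loop of $M$ or coloop of $N$ really is a component of $P$. For (3), I would take the separator $X$ of $M$ with $A \subseteq X \subsetneq S$ and show $X$ is a separator of $P$: since $A \subseteq X$, Corollary~\ref{cor:samerkasds} gives $r_P(X) = r_M(X)$ and $r_P((S\cup T)-X) = r_P((S-X)\cup T) = r_M(S-X) + r_N(T)$ (again because $A \subseteq X$ means $A$ is disjoint from $S-X$, so the relevant rank collapses to the direct-sum value), and these sum to $r_M(S) + r_N(T) = r(P)$. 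Condition (4) is the dual statement and follows by applying the argument for (3) to $(N^*,M^*;B,A) = P^*$ via Theorem~\ref{thm:dual}, noting that a separator of $N^*$ disjoint from $B$ corresponds to a separator of $N$ disjoint from $B$, and $P^*$ disconnected iff $P$ disconnected.

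For the ``only if'' direction, assume $P$ is disconnected and that none of (1), (2) hold; I must then derive (3) or (4). So $M$ has no loops, $N$ has no coloops, and $P \ne M \oplus N$; by Lemma~\ref{lem:chardirsum}, the last condition means $A$ is not a set of loops of $M$ and $B$ is not a set of coloops of $N$, and moreover $T$ (equivalently $S$) is \emph{not} a separator of $P$. Let $Z$ be a component of $P$, so $Z$ is a nonempty proper separator, and $Z$ meets both $S$ and $T$ nontrivially or — the point is $Z$ cannot be contained in $S$ or contain $S$ alone. Here the key structural tool is that separators are cyclic flats whose complements are also cyclic flats (a separator is a union of components, each of which is a cyclic flat when $P$ has no loops in it, and more to the point $Z$ and its complement are both unions of cyclic flats); so I would invoke Theorem~\ref{thm:prcyclicflats} to pin down the shape of $Z$. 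Write $Z = Z_S \cup Z_T$ with $Z_S \subseteq S$, $Z_T \subseteq T$. The three cases of Theorem~\ref{thm:prcyclicflats} (applied to $Z$ and to $(S\cup T) - Z$) constrain how $A$ sits relative to $Z_S$ and how $B$ sits relative to $Z_T$; combined with $r_P(Z) + r_P((S\cup T)-Z) = r(P) = r(M) + r(N)$ and Corollary~\ref{cor:prsumtodsum} (which says unions/intersections of cyclic flats have the direct-sum rank), I would deduce $r_M(Z_S) + r_M(S - Z_S) + r_N(Z_T) + r_N(T - Z_T) = r(M) + r(N)$, forcing both $Z_S$ to be a separator of $M$ and $Z_T$ a separator of $N$. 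Then a short case analysis on whether $A \subseteq Z_S$ or $A \subseteq S - Z_S$ (one of these must hold since cyclic flats of $P$ either contain $A$ or are disjoint from $B$, and similarly for the complement), and likewise for $B$ versus $Z_T$ and $T - Z_T$, together with the exclusion of the degenerate ``$S$ is a separator'' case, yields exactly condition (3) (when $A$ sits inside one proper separator of $M$) or condition (4) (when $B$ is disjoint from a nonempty proper separator of $N$).

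The main obstacle I anticipate is the bookkeeping in the converse direction: turning ``$Z$ is a component of $P$'' into simultaneous structural statements about $M$, $N$, $A$, and $B$. Specifically, I expect the delicate point to be ruling out the situation where $Z$ is a ``mixed'' separator that does not descend cleanly — i.e., confirming via Corollary~\ref{cor:prsumtodsum} and the rank identity that $Z_S$ and $Z_T$ must individually be separators rather than some genuinely new separator of the union appearing, and then correctly matching up which of (3) or (4) is produced (the asymmetry — $A$ contained in a separator of $M$ versus $B$ disjoint from a separator of $N$ — mirrors the asymmetry already visible in Lemma~\ref{lem:chardirsum}(3) and in the duality of Theorem~\ref{thm:dual}, so the cleanest organization is probably to prove (3)$\Rightarrow$disconnected directly, obtain (4)$\Rightarrow$disconnected by dualizing, and in the converse to reduce to the $A$-side by passing to $P^*$ whenever the component $Z$ is such that the ``interesting'' behavior is on the $T$-side). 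I would also need the small lemma that for a loopless, coloopless-on-$N$ principal sum that is not a direct sum, any component $Z$ satisfies $\emptyset \ne Z \cap S \ne S$ or the analogous statement on $T$, which follows from Lemma~\ref{lem:chardirsum} since $S$ being a separator is exactly the direct-sum case.
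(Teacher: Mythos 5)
Your ``if'' direction for condition (3) contains a genuine error: you assert that $X$ itself is a separator of $P$, justifying $r_P((S-X)\cup T)=r_M(S-X)+r_N(T)$ by Corollary \ref{cor:samerkasds} ``because $A$ is disjoint from $S-X$''. That corollary requires the opposite: $A$ must be \emph{contained in} the $S$-part of the set (or the $T$-part must be disjoint from $B$), and neither holds for $(S-X)\cup T$ when $A\ne\emptyset$ and $B\ne\emptyset$. In fact $X$ need not be a separator of $P$: take $M$ free on $\{x,y\}$, $A=\{x\}$, $N=U_{1,2}$ on $\{t_1,t_2\}$, $B=\{t_1\}$, $X=\{x\}$; then $\{x,t_1,t_2\}$ is a circuit of $P$ (Theorem \ref{thm:Ncircuits1}), so $r_P(\{x\})+r_P(\{y,t_1,t_2\})=1+3>3=r(P)$. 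The separator you want is $X\cup T$ (equivalently $S-X$): Corollary \ref{cor:samerkasds} does apply there, giving $r_P(X\cup T)=r_M(X)+r(N)$ and $r_P(S-X)=r_M(S-X)$, which sum to $r(P)$ because $X$ is a separator of $M$; this is the paper's argument. Your handling of (1), (2), and the dualization of (3) to get (4) is fine.

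The converse has the more serious gap. Your plan hinges on the claim that a component $Z$ of $P$ and its complement are (unions of) cyclic flats, so that Theorem \ref{thm:prcyclicflats}, Corollary \ref{cor:prsumtodsum}, and the ``contains $A$ or misses $B$'' property can be applied to them. This is false: separators need not be cyclic, and $P$ can have coloops even when (1) and (2) fail, because (2) excludes only loops of $M$ and coloops of $N$, not coloops of $M$. In the example above, conditions (1) and (2) fail, $P$ is disconnected, and the component $\{y\}$ lies in no circuit of $P$, so it is not a union of cyclic flats; thus the failure occurs exactly in the regime your converse must cover, and the deduction that $Z\cap S$ and $Z\cap T$ are separators of $M$ and $N$ never gets started. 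The paper instead argues directly from Theorem \ref{thm:rank}: for complementary separators $U,V$, each of $r_P(U),r_P(V)$ equals one of the expressions (\ref{ranka}), (\ref{rankb}); the two ``same expression'' cases are shown via semimodularity to force condition (1) (your sketch implicitly assumes the mixed case and never rules these out), and in the mixed case the rank equality forces $A\subseteq U\cap S$, $V\cap B=\emptyset$, and direct-sum decompositions $M=M|(U\cap S)\oplus M|(V\cap S)$, $N=N|(U\cap T)\oplus N|(V\cap T)$, from which (3) or (4) follows. Repairing your cyclic-flat route would require at least a separate treatment of coloops of $P$, which amounts to a different argument.
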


\begin{proof}
  We first show that if one of conditions (1)--(4) holds, then $P$ is
  disconnected.  It is easy to see that loops of $M$ are loops of $P$;
  also, coloops of $N$ are coloops of $P$.  Note that duality relates
  conditions (3) and (4), so to complete this part of the proof, we
  show that if condition (3) holds, then $X\cup T$ is a separator of
  $P$.  For this, note that Corollary \ref{cor:samerkasds} gives
  $r_P(X\cup T) = r_M(X)+ r(N)$ and $r_P(S-X)= r_M(S-X)$, so
  \begin{align*}
    r_P(X\cup T) +r_P(S-X) = &\, r(N)+ r_M(X) + r_M(S-X)\\
    = &\, r(N)+r(M),
  \end{align*}
  which is $r(P)$, as we needed to show.

  For the converse, assume that $U$ and $V$ are complementary nonempty
  separators of $P$.  Set $U_S=U\cap S$ and similarly define $U_T$,
  $V_S$, and $V_T$.  By Theorem \ref{thm:rank}, up to interchanging
  $U$ and $V$, one of the following quantities must be $r(M)+r(N)$:
  \begin{enumerate}
  \item[(a)] $r_M(U_S\cup A)+r_N(U_T) +r_M(V_S\cup A)+ r_N(V_T)$,
  \item[(b)] $r_M(U_S)+r_N(U_T-B)+|U\cap B| + r_M(V_S)+
    r_N(V_T-B)+|V\cap B|$,
  \item[(c)] $ r_M(U_S\cup A) +r_N(U_T) + r_M(V_S)+r_N(V_T-B)+|V\cap
    B|$.
  \end{enumerate}

  Semimodularity gives the inequalities $r_M(U_S\cup A)+r_M(V_S\cup
  A)\geq r(M)+r_M(A)$ and $r_N(U_T)+r_N(V_T)\geq r(N)$, so if the
  quantity in option (a) is $r(M)+r(N)$, then $r_M(A)=0$; thus, by
  Lemma~\ref{lem:chardirsum}, condition (1) holds.

  Similarly, if the quantity in option (b) is $r(M)+r(N)$, then we
  must have $$r_N(U_T-B)+ r_N(V_T-B)+|B| = r(N).$$ It follows that all
  element of $B$ are coloops of $N$, so condition (1) holds.

  Finally, assume that the quantity in option (c) is $r(M)+r(N)$.  We
  may assume that conditions (1) and (2) fail, so $r_M(A)>0$ by
  Lemma~\ref{lem:chardirsum}.  Clearly we have $$r_M(U_S\cup A) +
  r_M(V_S)\geq r(M) \quad \text{and} \quad r_N(U_T)+r_N(V_T-B)+|V\cap
  B|\geq r(N),$$ so the assumption about option (c) forces both of
  these inequalities to be equalities.  Thus,
  \begin{enumerate}
  \item[(i)] $r_M(U_S) = r_M(U_S\cup A)$ and $M=M|U_S \oplus M|V_S$,
    and
  \item[(ii)] $V\cap B = \emptyset$ and $N=N|U_T\oplus N|V_T$,
  \end{enumerate}
  where the first assertion in conclusion (ii) holds since elements in
  $V\cap B$ would be coloops of $N$, which we assumed has none.
  Likewise, conclusion (i) implies that $A \subseteq U_S$ since
  elements of $V\cap A$ would be loops of $M$.  At least one of $V_S$
  and $U_T$ is nonempty, for otherwise $V$ would be $T$, which would
  give $P=M\oplus N$.  If $V_S\ne \emptyset$, then condition (3)
  holds; otherwise, both $U_T$ and $V_T$ are nonempty, so condition
  (4) holds.
\end{proof}

Lemma \ref{lem:chardirsum} addresses the problem of when a principal
sum $(M,N;A,B)$ is the direct sum $M\oplus N$.  We now treat the
general problem of when two principal sums are equal.

\begin{thm}\label{thm:charequality}
  If $\cl_M(A) = \cl_M(A')$ and $\cl_{N^*}(B)=\cl_{N^*}(B')$, then the
  principal sums $(M,N;A,B)$ and $(M,N;A',B')$ are equal.  When
  $(M,N;A,B) \ne M\oplus N$, the converse also holds.
\end{thm}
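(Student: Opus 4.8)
The plan is to split the theorem into its two implications and handle each with the rank formula from Theorem~\ref{thm:rank}. For the forward direction, suppose $\cl_M(A)=\cl_M(A')$ and $\cl_{N^*}(B)=\cl_{N^*}(B')$. The first equality gives $r_M(X\cup A)=r_M(X\cup A')$ for every $X\subseteq S$, since closure is determined by rank. The second equality, translated through the standard rank relation between $N$ and $N^*$, should give $|Y\cap B|-r_N(Y)+r_N(Y-B)=|Y\cap B'|-r_N(Y)+r_N(Y-B')$ for every $Y\subseteq T$; concretely, $r_{N^*}(Z\cup B)-r_{N^*}(Z)=r_{N^*}(Z\cup B')-r_{N^*}(Z)$ with $Z=T-Y$, and one expands $r_{N^*}$ in terms of $r_N$ exactly as in the derivation of inequality~(\ref{idealeqn2}) in the proof of Lemma~\ref{lem:idealfilter}. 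Substituting both identities into the two quantities~(\ref{ranka}) and~(\ref{rankb}) whose minimum is $r_P(X\cup Y)$ shows that $(M,N;A,B)$ and $(M,N;A',B')$ have the same rank function on every $X\cup Y$, hence are equal.

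For the converse, assume $(M,N;A,B)=(M,N;A',B')$ and that this common matroid $P$ is not $M\oplus N$. I would try to recover $\cl_M(A)$ and $\cl_{N^*}(B)$ intrinsically from $P$. The cleanest route is through the cyclic flats: by Theorem~\ref{thm:prcyclicflats}, the description of $\mcZ(P)$ depends on $A$ only through $\cl_M(A)$ (every condition involving $A$ is of the form ``$A\subseteq F_M$'' or ``$A\not\subseteq Z_M$'' for a flat $F_M$ or cyclic flat $Z_M$ of $M$, and such containments are unchanged when $A$ is replaced by any set with the same closure) and on $B$ only through which subsets of $T$ meet $B$, equivalently through the collection of $N$-cyclic sets disjoint from $B$, which is controlled by $\cl_{N^*}(B)$. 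Since $P$ determines $\mcZ(P)$, and $\mcZ(P)$ determines $P$ (cyclic flats with their ranks determine a matroid), the idea is to show that two principal sums with the same cyclic flats must have $\cl_M(A)=\cl_M(A')$ and $\cl_{N^*}(B)=\cl_{N^*}(B')$, provided $P\ne M\oplus N$.

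To make that last step precise I would argue by contraposition: if $\cl_M(A)\ne\cl_M(A')$, say some $a\in\cl_M(A')\setminus\cl_M(A)$ (or vice versa), then I would exhibit a set whose rank differs in the two principal sums. Taking $X=\emptyset$, $Y=B$ (or a suitable cyclic flat of $N$ meeting $B$) and comparing expression~(\ref{ranka}), $r_M(A)+r_N(Y)$ versus $r_M(A')+r_N(Y)$, against expression~(\ref{rankb}) should separate the two matroids unless $r_M(A)=r_M(A')$ and in fact the flats coincide; the hypothesis $P\ne M\oplus N$ is exactly what rules out the degenerate case, identified in Lemma~\ref{lem:chardirsum}, where $A$ is a set of loops (so $\cl_M(A)$ is the set of loops regardless) and symmetrically where $B$ is a set of coloops of $N$. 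The dual statement for $B$ follows by applying the $A$-argument to $(N^*,M^*;B,A)$ using Theorem~\ref{thm:dual}. The main obstacle I anticipate is the bookkeeping in this separating-set argument: one must choose $X$ and $Y$ carefully so that the minimum in Theorem~\ref{thm:rank} is attained by the expression that actually sees the discrepancy, and verify that the exceptional direct-sum case is the only obstruction — that is where the hypothesis $(M,N;A,B)\ne M\oplus N$ must be used in full force, via Lemma~\ref{lem:chardirsum}.
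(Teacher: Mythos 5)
Your forward direction is fine: expression (\ref{ranka}) depends on $A$ only through $\cl_M(A)$, and the identity $|Y\cap B|+r_N(Y-B)-r_N(Y)=r_{N^*}((T-Y)\cup B)-r_{N^*}(T-Y)$ shows that expression (\ref{rankb}) depends on $B$ only through $\cl_{N^*}(B)$, so the two rank functions from Theorem \ref{thm:rank} agree (the paper gets this more quickly from the definition of $M^+(A,B)$ together with Theorem \ref{thm:dual}, but your computation is correct). The genuine gap is in the converse. Your separating-set sketch does not work as stated: with $X=\emptyset$, expression (\ref{ranka}) reduces to $r_M(A)+r_N(Y)$, so no set of the form $\emptyset\cup Y$ can distinguish $(M,N;A,B)$ from $(M,N;A',B')$ when $r_M(A)=r_M(A')$, and equal ranks certainly do not force $\cl_M(A)=\cl_M(A')$; the clause ``unless $r_M(A)=r_M(A')$ and in fact the flats coincide'' assumes exactly what has to be proved. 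Moreover $B$ and $B'$ may differ at the same time, so the choice $Y=B$ is ambiguous and the two instances of expression (\ref{rankb}) differ as well, so a discrepancy in (\ref{ranka}) can be masked by the minimum; none of this is resolved, and you flag it yourself as an obstacle rather than closing it.

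The cyclic-flat route you gesture at is the right one (and is the paper's), but the observation you make---that $\mcZ(P)$ depends on $A$ only through $\cl_M(A)$ and on $B$ only through $\cl_{N^*}(B)$---is again the forward direction; the converse needs an explicit recovery of $\cl_M(A)$ from $P$, which is missing. Concretely: by the first part one may assume $A,A'\in\mcF(M)$ and $B,B'\in\mcF(N^*)$; since $P\ne M\oplus N$, Lemma \ref{lem:chardirsum} shows $B$ (and likewise $B'$) is not a set of coloops of $N$, so the maximal cyclic flat $Z_N$ of $N$ meets both $B$ and $B'$; then Theorem \ref{thm:prcyclicflats} shows that the cyclic flats of $P$ whose intersection with $T$ equals $Z_N$ are exactly the sets $F_M\cup Z_N$ with $F_M\in\mcF(M)$, $A\subseteq F_M$, and all coloops of $M|F_M$ contained in $A$ (items (1) and (3) of that theorem are excluded because $Z_N\cap B\ne\emptyset$), and the least such set is $A\cup Z_N$. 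Hence $A$ is determined by $P$, so $A=A'$, and applying the same argument to $P^*=(N^*,M^*;B,A)$ via Theorem \ref{thm:dual} gives $B=B'$. Without this step (or a fully executed rank-separation argument using sets whose $S$-part is nonempty, for instance $X\cup C$ with $C$ a circuit of $N$ meeting $B$), your converse remains an outline rather than a proof.
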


\begin{proof}
  The extension $M^+(A,B)$ adds the elements in $B$ to the flat
  spanned by $A$, so if $\cl_M(A) = \cl_M(A')$, then $(M,N;A,B) =
  (M,N;A',B)$.  The first part of the result follows from this
  observation and Theorem \ref{thm:dual}.

  With the first part and Theorem \ref{thm:dual}, in order to prove
  the second, it suffices to show that if $A\in\mcF(M)$ and
  $B\in\mcF(N^*)$, then $A$ can be recovered from $(M,N;A,B)$.  Since
  $(M,N;A,B)\ne M\oplus N$, Lemma \ref{lem:chardirsum} gives
  $B\ne\cl_{N^*}(\emptyset)$.  The maximum cyclic flat $Z_N$ of $N$
  contains all elements of $T$ except the coloops of $N$, so $Z_N\cap
  B\ne \emptyset$.  Theorem \ref{thm:prcyclicflats} implies that
  $A\cup Z_N$ is the least cyclic flat of $(M,N;A,B)$ of the form
  $F_M\cup Z_N$ with $F_M\in\mcF(M)$.  Thus, we can determine $A\cup
  Z_N$ and hence $A$, as needed.
\end{proof}

Combining the first part of this theorem with
Lemma~\ref{lem:unionspreserveweak} gives the following result, which
gives more ways to express a principal sum as a matroid union.

\begin{cor}
  For $B\subseteq T$, if $B_0$ is a basis of the restriction
  $N^*|\cl_{N^*}(B)$ of the dual of $N$ and $M^+$ is any (not
  necessarily principal) extension of $M$ to $S\cup T$ with
  $$M^+(A,B_0)\leq_w M^+\leq_w  M^+(A,\cl_{N^*}(B)),$$ then $M^+\join
  N_0 = (M,N;A,B)$.
\end{cor}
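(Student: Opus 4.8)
The plan is to obtain this corollary as an immediate consequence of the first part of Theorem~\ref{thm:charequality} together with part~(2) of Lemma~\ref{lem:unionspreserveweak}; no genuinely new argument is needed, only the identification of the right two endpoints to sandwich $M^+\join N_0$ between.

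First I would observe that the principal sums $(M,N;A,B_0)$, $(M,N;A,B)$, and $(M,N;A,\cl_{N^*}(B))$ all coincide. Since $B_0$ is a basis of the restriction $N^*|\cl_{N^*}(B)$, we have $\cl_{N^*}(B_0)=\cl_{N^*}(B)$, and of course $\cl_{N^*}(\cl_{N^*}(B))=\cl_{N^*}(B)$; taking $A'=A$ and the appropriate choices of the second subset in the first part of Theorem~\ref{thm:charequality} then gives these equalities. Write $K$ for this common principal sum. By Definition~\ref{dfn:principalsum}, $K=M^+(A,B_0)\join N_0$ and also $K=M^+(A,\cl_{N^*}(B))\join N_0$.

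Next I would invoke Lemma~\ref{lem:unionspreserveweak}(2) with $G_1=M^+(A,B_0)$, $G=M^+$, $G_2=M^+(A,\cl_{N^*}(B))$, and $H_1=H=H_2=N_0$. The hypothesis of the corollary is precisely the chain $G_1\leq_w G\leq_w G_2$, while $H_1\leq_w H\leq_w H_2$ holds trivially. Since $G_1\join H_1=K=G_2\join H_2$, the lemma yields $G\join H=K$, that is, $M^+\join N_0=(M,N;A,B)$. (If one wishes, one can also note that this sandwich forces $r_{M^+}(X)=r_M(X)$ for every $X\subseteq S$ and $r(M^+)=r(M)$, so $M^+$ is indeed a rank-preserving extension of $M$ to which Theorem~\ref{unionthm} applies; but this is not needed for the equality.)

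There is no real obstacle here. The only point that must be in place is that the matroid union $M^+(A,B)\join N_0$ depends on $B$ only through the $N^*$-closure $\cl_{N^*}(B)$, and that is exactly what the first part of Theorem~\ref{thm:charequality} (established there via Theorem~\ref{thm:dual}) supplies. The work of Lemma~\ref{lem:unionspreserveweak}(2) is simply to transfer this coincidence from the two principal extensions at the ends of the weak-order interval to the arbitrary extension $M^+$ lying between them.
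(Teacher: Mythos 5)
Your proof is correct and is exactly the argument the paper intends: it combines the first part of Theorem \ref{thm:charequality} (the principal sum depends on $B$ only through $\cl_{N^*}(B)$, so the two endpoint principal sums both equal $(M,N;A,B)$) with Lemma \ref{lem:unionspreserveweak}(2) applied with $H_1=H=H_2=N_0$. No gaps; the identification of the endpoints and the weak-order sandwich is precisely the paper's route.
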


The next result is an associative law for principal sums. The proof,
which we omit, is a routine computation based on Theorem
\ref{thm:rank}.

\begin{thm}
  Let $M$, $N$, and $K$ be matroids on the disjoint sets $S$, $T$, and
  $U$.  For $A\subseteq S$, $B\subseteq T$, and $C\subseteq U$,
  $$((M,N;A,B),K;A\cup B,C) = (M,(N,K;B,C);A,B\cup C).$$
\end{thm}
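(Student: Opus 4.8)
The plan is to verify the claimed identity
$$((M,N;A,B),K;A\cup B,C) = (M,(N,K;B,C);A,B\cup C)$$
by showing that the two matroids have the same rank function on every subset of $S\cup T\cup U$, using Theorem~\ref{thm:rank} to expand each side. First I would fix an arbitrary set $Z = X\cup Y\cup W$ with $X\subseteq S$, $Y\subseteq T$, and $W\subseteq U$, and write $P_L$ for the left-hand principal sum and $P_R$ for the right-hand one. Applying Theorem~\ref{thm:rank} to $P_L$, with ground ``left factor'' $(M,N;A,B)$ on $S\cup T$ and ``right factor'' $K$ on $U$, with distinguished flats $A\cup B$ and $C$, expresses $r_{P_L}(Z)$ as the minimum of
$$r_{(M,N;A,B)}\bigl((X\cup Y)\cup(A\cup B)\bigr) + r_K(W)$$
and
$$r_{(M,N;A,B)}(X\cup Y) + r_K(W-C) + |W\cap C|.$$
Then I would apply Theorem~\ref{thm:rank} a second time to each occurrence of $r_{(M,N;A,B)}$, obtaining a minimum of two terms in each case, so that $r_{P_L}(Z)$ becomes a minimum of four explicit expressions in $r_M$, $r_N$, $r_K$ and cardinalities. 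The key simplification is that $r_{(M,N;A,B)}\bigl((X\cup A)\cup(Y\cup B)\bigr)$ collapses to $r_M(X\cup A)+r_N(Y\cup B)$ by Corollary~\ref{cor:samerkasds} (since $A$ is contained in the first coordinate set here), which kills one of the two inner minima in the first outer term.

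Next I would carry out the symmetric computation for $P_R$: here the left factor is $M$ on $S$, the right factor is $(N,K;B,C)$ on $T\cup U$, and the distinguished flats are $A$ and $B\cup C$. Theorem~\ref{thm:rank} gives $r_{P_R}(Z)$ as the minimum of
$$r_M(X\cup A) + r_{(N,K;B,C)}\bigl((Y\cup W)\cup(B\cup C)\bigr)$$
and
$$r_M(X) + r_{(N,K;B,C)}\bigl((Y\cup W)-(B\cup C)\bigr) + |(Y\cup W)\cap(B\cup C)|.$$
Again the first term simplifies via Corollary~\ref{cor:samerkasds} applied inside $(N,K;B,C)$ (with $B$ inside the first coordinate), giving $r_M(X\cup A)+r_N(Y\cup B)+r_K(W)$; and the second term, after splitting $(Y\cup W)-(B\cup C) = (Y-B)\cup(W-C)$ and $(Y\cup W)\cap(B\cup C) = (Y\cap B)\cup(W\cap C)$, expands by Theorem~\ref{thm:rank} inside $(N,K;B,C)$ into a further minimum of two terms. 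Then I would match the resulting list of expressions for $r_{P_R}(Z)$ against the list for $r_{P_L}(Z)$ term by term, using the disjointness of $S,T,U$ and the disjointness of $A\cup B$ from $C$ (and of $A$ from $B\cup C$) to identify, e.g., the expression $r_M(X)+r_N(Y-B)+|Y\cap B|+r_K(W-C)+|W\cap C|$ as the common ``all loops'' term. After discarding redundant terms (an expression that is provably never the minimum, because another listed expression dominates it pointwise) the two four-term minima reduce to the same three- or four-term minimum, completing the proof.

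The main obstacle I anticipate is purely bookkeeping: keeping the four (or more) expressions on each side straight, confirming that each side's list of candidate minima, once trivial-minimum terms are dropped, is literally the same set of expressions. The genuinely nontrivial point — and the only place where a real matroid fact rather than arithmetic enters — is the use of Corollary~\ref{cor:samerkasds} to flatten $r_{(M,N;A,B)}(\,\cdot\,\cup A\cup B\,\cdot\,)$ and $r_{(N,K;B,C)}(\,\cdot\,\cup B\cup C\,\cdot\,)$ into sums of ranks in the factors; once those collapses are in hand, the remaining verification is the ``routine computation'' the theorem statement promises, and I would present it compactly rather than writing out all the intermediate minima. I do not expect to need any structural lemma about circuits, flats, or cyclic flats; the rank-function comparison suffices and is the cleanest route.
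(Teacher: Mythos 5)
Your overall route---compare the two rank functions via Theorem \ref{thm:rank}, simplifying with Corollary \ref{cor:samerkasds}---is exactly the routine computation the paper has in mind, and your treatment of the left-hand side is correct. The problem is your expansion of the right-hand side: for $P_R=(M,(N,K;B,C);A,B\cup C)$, expression (\ref{ranka}) of Theorem \ref{thm:rank} is $r_M(X\cup A)+r_{(N,K;B,C)}(Y\cup W)$; the set $B\cup C$ plays the role of ``$B$'' in that theorem and is never unioned into the second factor's argument. You instead wrote $r_M(X\cup A)+r_{(N,K;B,C)}\bigl((Y\cup W)\cup(B\cup C)\bigr)$, which (by Corollary \ref{cor:samerkasds}) would collapse to $r_M(X\cup A)+r_N(Y\cup B)+r_K(W\cup C)$, not to $r_K(W)$ as you claim, and in any case is not what Theorem \ref{thm:rank} gives. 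This is not cosmetic: the correct first branch does \emph{not} collapse to one term, since $r_{(N,K;B,C)}(Y\cup W)$ is the minimum of $r_N(Y\cup B)+r_K(W)$ and $r_N(Y)+r_K(W-C)+|W\cap C|$, and the second of these is precisely what must match the left-hand term $r_M(X\cup A)+r_N(Y)+r_K(W-C)+|W\cap C|$ arising from $r_{(M,N;A,B)}(X\cup Y)$ in your expansion of $P_L$. Your version of the right-hand list omits that term, and it is not redundant: take $X=A$, let $N$ be the direct sum of a parallel pair $\{b_1,b_2\}$ and a free matroid on $\{b_3,b_4,t\}$ with $B=\{b_1,b_2,b_3,b_4\}$ and $Y=\{b_1,b_2,t\}$, and let $K=U_{1,2}$ on $\{c_1,c_2\}$ with $C=\{c_1\}$, $W=\{c_1,c_2\}$; then the omitted term equals $r_M(A)+4$ while the other two candidates both equal $r_M(A)+5$, so the matching you plan would break down (your formula would even report the wrong rank for $P_R$).

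The repair is straightforward and then your plan goes through: for $X\subseteq S$, $Y\subseteq T$, $W\subseteq U$, both sides are the minimum of the same three quantities, namely $r_M(X\cup A)+r_N(Y\cup B)+r_K(W)$, $r_M(X\cup A)+r_N(Y)+r_K(W-C)+|W\cap C|$, and $r_M(X)+r_N(Y-B)+|Y\cap B|+r_K(W-C)+|W\cap C|$. On the left, the first comes from Corollary \ref{cor:samerkasds} applied to $r_{(M,N;A,B)}(X\cup Y\cup A\cup B)$ (this part of your sketch is right) and the other two from expanding $r_{(M,N;A,B)}(X\cup Y)$ by (\ref{ranka}) and (\ref{rankb}); on the right, the first two come from expanding $r_{(N,K;B,C)}(Y\cup W)$ inside the corrected first branch, and the third from the second branch, where $(Y\cup W)-(B\cup C)=(Y-B)\cup(W-C)$ and the sub-term $r_N(Y\cup B)+r_K(W-C)$ is dominated by $r_N(Y-B)+r_K(W-C)$ and may be discarded. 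With that correction your argument is exactly the omitted ``routine computation based on Theorem \ref{thm:rank}'', and no facts about circuits or cyclic flats are needed, as you anticipated.
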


\section{Semidirect sums from Higgs lifts}\label{sec:othercon}

In this section, we present a construction of semidirect sums that is
based on Higgs lifts and that gives a different perspective on
principal sums.  We first recall the definition and a few key
properties of Higgs lifts.  See \cite{splice, Brylawski, Strong} for
more information on this operation.

Let $Q$ be a quotient of the matroid $L$ on $E$.  For any integer $i$
with $0\leq i\leq r(L)-r(Q)$, the function $r$ given by
\begin{equation}\label{Higgsrank}
  r(W) = \min\{r_{Q}(W)+i,\,r_{L}(W)\},
\end{equation}
for $W\subseteq E$, is the rank function of a matroid on $E$; this
matroid is the \emph{$i$-th Higgs lift of $Q$ toward $L$} and is
denoted $H^i_{Q,L}$.  The matroid $H^i_{Q,L}$ is the freest (i.e.,
greatest in the weak order) quotient of $L$ that has $Q$ as a quotient
and has rank $r(Q)+i$.  It is useful to extend the range of $i$ by
letting $H^i_{Q,L}$ be $Q$ if $i<0$ and $L$ if $i> r(L)-r(Q)$.

We will use the following two well-known results about the Higgs lift.
Lemma \ref{lem:HLDual} and the first part of Lemma \ref{lem:HLMinor}
follow from equation (\ref{Higgsrank}) by routine computations; the
second part of Lemma \ref{lem:HLMinor} follows from the first part and
Lemma \ref{lem:HLDual}.  Implicit in the statement of Lemma
\ref{lem:HLDual} is the basic result that if $Q$ is a quotient of $L$,
then $L^*$ is a quotient of $Q^*$.

\begin{lemma}\label{lem:HLDual}
  If $Q$ is a quotient of $L$ and $i+j=r(L)-r(Q)$, then
  $$(H^i_{Q,L})^*=H^j_{L^*,Q^*\!}.$$
\end{lemma}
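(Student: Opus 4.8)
The plan is to verify the identity $(H^i_{Q,L})^*=H^j_{L^*,Q^*}$ directly at the level of rank functions, using the standard rank formula $r_{K^*}(W)=|W|-r(K)+r_K(E-W)$ for the dual of a matroid $K$ on $E$, together with the defining equation (\ref{Higgsrank}) for Higgs lifts. First I would record the hypotheses: $Q$ is a quotient of $L$, so $L^*$ is a quotient of $Q^*$ (as noted in the remark preceding the lemma), and hence $H^j_{L^*,Q^*}$ is defined whenever $0\le j\le r(Q^*)-r(L^*)$; since $r(Q^*)-r(L^*)=(|E|-r(Q))-(|E|-r(L))=r(L)-r(Q)=i+j$, the index $j$ lies in the admissible range exactly when $i$ does, so both sides are honestly defined for the same range of $i$ (and the extended conventions for $i$ outside the range match up correctly as well).

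Next I would compute $r_{(H^i_{Q,L})^*}(W)$ for an arbitrary $W\subseteq E$. Writing $r_i$ for the rank function of $H^i_{Q,L}$ and $E-W$ for the complement, equation (\ref{Higgsrank}) gives $r_i(E-W)=\min\{r_Q(E-W)+i,\ r_L(E-W)\}$, and $r(H^i_{Q,L})=r(Q)+i$ (this is part of the stated characterization of the Higgs lift). Hence
\[
r_{(H^i_{Q,L})^*}(W)=|W|-r(Q)-i+\min\{r_Q(E-W)+i,\ r_L(E-W)\}.
\]
Distributing $|W|-r(Q)-i$ across the minimum turns this into
\[
\min\bigl\{\,|W|-r(Q)+r_Q(E-W),\ \ |W|-r(Q)-i+r_L(E-W)\,\bigr\}.
\]
Now I would recognize the two terms: $|W|-r(Q)+r_Q(E-W)=r_{Q^*}(W)$, and $|W|-r(L)+r_L(E-W)=r_{L^*}(W)$, so the second term equals $r_{L^*}(W)+\bigl(r(L)-r(Q)-i\bigr)=r_{L^*}(W)+j$. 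Therefore $r_{(H^i_{Q,L})^*}(W)=\min\{r_{Q^*}(W),\ r_{L^*}(W)+j\}$, which is precisely the rank function of $H^j_{L^*,Q^*}$ as given by equation (\ref{Higgsrank}) applied to the quotient pair $(L^*,Q^*)$ (note the roles: $L^*$ plays the part of ``$Q$'' and $Q^*$ the part of ``$L$'', and $j$ is the lift index). Since the two matroids have the same rank function on every subset $W$ of $E$, they are equal.

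The computation is entirely routine, so there is no serious obstacle; the only points requiring a word of care are (i) confirming that the admissible ranges for $i$ and $j$ correspond under $i+j=r(L)-r(Q)$, so that the statement is not vacuous and the boundary cases ($H^i_{Q,L}=Q$ or $=L$, dually $H^j_{L^*,Q^*}=L^*$ or $=Q^*$) are consistent with the conventions; and (ii) keeping straight that in the Higgs-lift notation the first subscript is the quotient and the second is the lift, so on the right-hand side one is lifting $L^*$ toward $Q^*$, which makes sense precisely because $L^*$ is a quotient of $Q^*$. I would state these two points explicitly and then present the three-line rank computation above.
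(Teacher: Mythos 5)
Your computation is correct and is exactly the ``routine computation'' from equation (\ref{Higgsrank}) that the paper invokes without writing out: dualize the rank function, absorb $|W|-r(Q)-i$ into the minimum, and identify the two terms as $r_{Q^*}(W)$ and $r_{L^*}(W)+j$. Your side remarks on the admissible range of $j$ and the boundary conventions are also consistent with the paper's setup, so nothing is missing.
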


\begin{lemma}\label{lem:HLMinor}
  For any subset $W$ of $E$ and integer $i$, we have 
  $$H^i_{Q,L}|W = H^i_{Q|W,L|W} \qquad \text{ and } \qquad
  H^i_{Q,L}/W =H^{i-k}_{Q/W,L/W},$$ where $k = r_{L}(W) -r_{Q}(W)$.
\end{lemma}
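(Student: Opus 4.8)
The plan is to verify both formulas directly from the rank-function definition of the Higgs lift in equation (\ref{Higgsrank}), using only the standard formulas for the rank functions of restrictions and contractions together with the hypothesis that $Q$ is a quotient of $L$. For the restriction formula, fix $W\subseteq E$. On one hand, for any $X\subseteq W$ we have $r_{H^i_{Q,L}|W}(X) = r_{H^i_{Q,L}}(X) = \min\{r_Q(X)+i,\,r_L(X)\}$ by (\ref{Higgsrank}). On the other hand, $Q|W$ is a quotient of $L|W$ (quotients are preserved by restriction), and $r(L|W)-r(Q|W) = r_L(W)-r_Q(W)$, so by (\ref{Higgsrank}) applied to the pair $(Q|W,L|W)$ with the same index $i$, we get $r_{H^i_{Q|W,L|W}}(X) = \min\{r_{Q|W}(X)+i,\,r_{L|W}(X)\} = \min\{r_Q(X)+i,\,r_L(X)\}$. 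These agree, giving $H^i_{Q,L}|W = H^i_{Q|W,L|W}$. One small point to record: when $i$ falls outside the ``legal'' range $0\le i\le r(L)-r(Q)$ we are using the convention that $H^i$ equals $Q$ (resp.\ $L$), and one checks that the formula in (\ref{Higgsrank}) still yields $r_Q$ (resp.\ $r_L$) in those cases, or simply invokes the convention directly; the same caveat applies throughout.

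For the contraction formula, again fix $W\subseteq E$ and set $k = r_L(W)-r_Q(W)$; note $k\ge 0$ since $r_Q\le r_L$. For $X\subseteq E-W$,
\[
r_{H^i_{Q,L}/W}(X) = r_{H^i_{Q,L}}(X\cup W) - r_{H^i_{Q,L}}(W)
= \min\{r_Q(X\cup W)+i,\,r_L(X\cup W)\} - \min\{r_Q(W)+i,\,r_L(W)\}.
\]
I would like to subtract $r_Q(W)$ from both terms of the first minimum and argue that the second minimum equals $r_Q(W)+\min\{i,k\}$. Comparing this with the target expression $r_{H^{i-k}_{Q/W,L/W}}(X) = \min\{r_{Q/W}(X)+(i-k),\,r_{L/W}(X)\} = \min\{r_Q(X\cup W)-r_Q(W)+i-k,\,r_L(X\cup W)-r_L(W)\}$, the identity to be checked reduces to a purely arithmetic claim about the two nested minima. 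The cleanest route is a short case analysis on the sign of $i-k$ (equivalently, whether $r_Q(W)+i$ or $r_L(W)$ is the smaller in $r_{H^i_{Q,L}}(W)$): in the case $i\le k$ one shows $r_{H^i_{Q,L}}(W)=r_Q(W)+i$ and the claimed equality follows after cancellation, while in the case $i>k$ one has $r_{H^i_{Q,L}}(W)=r_L(W)$ and $H^{i-k}_{Q/W,L/W}$ should be read via the convention when $i-k$ exceeds $r(L/W)-r(Q/W)=k - k=\dots$, so one must be careful that $r(L/W)-r(Q/W) = (r(L)-r_L(W)) - (r(Q)-r_Q(W)) = (r(L)-r(Q)) - k$, and the bookkeeping closes. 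Alternatively, and more slickly, I would derive the contraction formula from the restriction formula together with Lemma~\ref{lem:HLDual}, exactly as the statement of the lemma suggests: $H^i_{Q,L}/W = \bigl((H^i_{Q,L})^* | (E-W)\bigr)^* = \bigl(H^j_{L^*,Q^*}|(E-W)\bigr)^*$ with $i+j=r(L)-r(Q)$, then apply the already-proved restriction formula to the pair $(L^*,Q^*)$ on $E-W$, and finally reapply Lemma~\ref{lem:HLDual} in reverse; the index shift $i\mapsto i-k$ emerges automatically from tracking how the ranks of the relevant matroids change under restricting the dual.

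The main obstacle is purely notational rather than conceptual: getting the index arithmetic right, including the behavior at the endpoints where the Higgs-lift convention ($H^i = Q$ for $i<0$, $H^i=L$ for $i>r(L)-r(Q)$) takes over. In the duality-based derivation this is where one must confirm that $j = r(L)-r(Q)-i$ transforms correctly under passing to a minor of the dual — specifically that the corank drop $r_{Q^*}(E-W)-r_{L^*}(E-W)$ matches what is needed so that the index on the inner Higgs lift comes out to $j$ and the final index comes out to $i-k$. I expect this to be the only place requiring genuine care; everything else is a direct substitution into (\ref{Higgsrank}).
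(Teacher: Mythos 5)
Your proposal is correct and matches the paper's (unwritten) argument: the paper simply notes that the restriction formula follows from equation (\ref{Higgsrank}) by routine computation and that the contraction formula follows from the restriction formula together with Lemma~\ref{lem:HLDual}, which is exactly your ``slicker'' alternative route; your direct case analysis on the sign of $i-k$ also closes, using that $r_L-r_Q$ is nondecreasing when $Q$ is a quotient of $L$ (which likewise handles the out-of-range index convention in the restriction case).
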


With these lemmas, we can now give another construction of semidirect
sums.

\begin{thm}\label{thm:higgsgivessd}
  Let $M_q$ be a quotient of $M$ with $r(M)-r(M_q) = i$, and $N^l$ be a
  lift of $N$.  Set $Q = M_q\oplus N$ and $L = M\oplus N^l$.
  \begin{enumerate}
  \item The matroid $Q$ is a quotient of $L$.
  \item The Higgs lift $H^i_{Q,L}$ is a semidirect sum of $M$ and
    $N$.
  \end{enumerate}
\end{thm}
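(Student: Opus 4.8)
The plan is to treat the two parts separately: part~(1) is a short remark that a direct sum of quotients is a quotient, and part~(2) reduces, via Lemma~\ref{lem:HLMinor}, to two Higgs lifts sitting at the ends of their range.

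For part~(1), I would use that closure in a direct sum splits over the two ground sets: for $F\subseteq S\cup T$, $\cl_L(F)=\cl_M(F\cap S)\cup\cl_{N^l}(F\cap T)$ and $\cl_Q(F)=\cl_{M_q}(F\cap S)\cup\cl_N(F\cap T)$. Since $M_q$ is a quotient of $M$ we have $\cl_M(F\cap S)\subseteq\cl_{M_q}(F\cap S)$, and since $N^l$ is a lift of $N$ (so $N$ is a quotient of $N^l$) we have $\cl_{N^l}(F\cap T)\subseteq\cl_N(F\cap T)$; taking unions yields $\cl_L(F)\subseteq\cl_Q(F)$, so $Q$ is a quotient of $L$. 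I would also record the rank bookkeeping that part~(2) needs: writing $j=r(N^l)-r(N)\ge 0$, we have $r(L)-r(Q)=(r(M)-r(M_q))+(r(N^l)-r(N))=i+j$, so $0\le i\le r(L)-r(Q)$ and $H^i_{Q,L}$ lies in its natural range.

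For part~(2), by Definition~\ref{dfn:SDP} it suffices to show $H^i_{Q,L}|S=M$ and $H^i_{Q,L}/S=N$. Applying the restriction part of Lemma~\ref{lem:HLMinor} with $W=S$, and using $Q|S=(M_q\oplus N)|S=M_q$ and $L|S=(M\oplus N^l)|S=M$, gives $H^i_{Q,L}|S=H^i_{M_q,M}$; since $i=r(M)-r(M_q)$ is exactly the rank drop from $M$ to its quotient $M_q$, this Higgs lift is $M$ itself. For the contraction, the contraction part of Lemma~\ref{lem:HLMinor} gives $H^i_{Q,L}/S=H^{i-k}_{Q/S,L/S}$ with $k=r_L(S)-r_Q(S)=r_M(S)-r_{M_q}(S)=r(M)-r(M_q)=i$, so the index is $i-k=0$; since $Q/S=(M_q\oplus N)/S=N$ and $L/S=(M\oplus N^l)/S=N^l$, we get $H^i_{Q,L}/S=H^0_{N,N^l}=N$ by the definition of the Higgs lift. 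Hence $H^i_{Q,L}$ is a semidirect sum of $M$ and $N$.

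The argument is essentially bookkeeping, so there is no genuine obstacle; the only point requiring care is tracking the ranks so that the index shift $i-k$ in the contraction formula lands precisely at $0$ (and so that $i$ stays in $[0,r(L)-r(Q)]$, making $H^i_{Q,L}$ a genuine interpolant between $Q$ and $L$), which is exactly what the computations in the first paragraph supply.
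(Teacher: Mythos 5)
Your proposal is correct and follows essentially the same route as the paper: part (1) is the easy verification via closures in direct sums, and part (2) is exactly the paper's application of Lemma~\ref{lem:HLMinor} with $W=S$, noting $k=r_L(S)-r_Q(S)=i$ so that the restriction gives $H^i_{M_q,M}=M$ and the contraction gives $H^0_{N,N^l}=N$. The extra bookkeeping you record (that $0\le i\le r(L)-r(Q)$) is a sensible addition but does not change the argument.
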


\begin{proof}
  The first assertion is easy to check.  For the second, Lemma
  \ref{lem:HLMinor} gives
  $$H^i_{Q,L}|S = H^i_{Q|S,L|S} = H^i_{M_q,M} = M$$
  and, since $r_{L}(S) -r_{Q}(S) = i$,
  \begin{equation*}
    H^i_{Q,L}/S = H^{i-i}_{Q/S,L/S} = H^0_{N,N^l} = N. \qedhere
  \end{equation*}  
\end{proof}

It follows from Lemma \ref{lem:HLDual} that (unlike the constructions
in Theorems \ref{unionthm} and \ref{wedgethm}), the dual of the
semidirect sum constructed in Theorem \ref{thm:higgsgivessd} is
another instance of the same construction.

We turn to a special case.  Let $A$ be a subset of the set $S$ on
which $M$ is defined, and $B$ a subset of the set $T$ on which $N$ is
defined.  Note that $(M/A) \oplus U_{0,A}$ is quotient of $M$ and
$(N\del B) \oplus U_{|B|,B}$ is a lift of $N$.  As in Theorem
\ref{thm:higgsgivessd}, set $Q = (M/A) \oplus U_{0,A}\oplus N$ and
$L=M\oplus (N\del B) \oplus U_{|B|,B}$.  The rank function of the
$r_M(A)$-Higgs lift, $H$, of $Q$ toward $L$ is give as follows: for
$X\subseteq S$ and $Y\subseteq T$,
$$r_H(X\cup Y) = \min\{ r_{M/A}(X-A) + r_N(Y) +r_M(A),\,
r_M(X)+r_N(Y-B)+|Y\cap B|\},$$ that is,
$$r_H(X\cup Y) = \min\{ r_M(X\cup A) + r_N(Y),\,
r_M(X)+r_N(Y-B)+|Y\cap B|\},$$ which, by Theorem \ref{thm:rank}, is
the rank function of the principal sum $(M,N;A,B)$.  Thus, principal
sums are Higgs lifts.  Alternative derivations of the results on
duality, the closure operator, flats, cyclic flats, and circuits for
principal sums can be given by applying the corresponding results
about Higgs lifts (see, e.g., \cite[Theorem 2.3]{splice}).

Following Lemma \ref{lem:SDPindependent}, we used that result to show
that the free product $M\frp N$ is the principal sum $(M,N;S,T)$.
With the remarks in the previous paragraph, it follows that $M\frp N$
is also the $r(M)$-th Higgs lift of $U_{0,S}\oplus N$ toward $M\oplus
U_{|T|,T}$.

We close this section with examples that show that some semidirect
sums given by the construction in Theorem \ref{unionthm} are not given
by that in Theorem \ref{thm:higgsgivessd} and vice versa.

To see that some semidirect sums of the form $M^+\join N_0$ are not
Higgs lifts of the type treated in Theorem \ref{thm:higgsgivessd},
consider the semidirect sum $K$ in Figure \ref{whirl}.  Note that if
$K$ were $H^i_{Q,L}$ for some $Q$ and $L$ as in Theorem
\ref{thm:higgsgivessd}, then $d$, $e$, and $f$ are coloops of $N^l$
(since $K\del S$ is free) and so of $L$.  Nothing in $L$ distinguishes
$d$ from $f$ and these elements are parallel in $N$ and so in $Q$;
hence no Higgs lift can distinguish $d$ from $f$ by having $\{a,f,e\}$
and $\{c,d,e\}$ as lines.  Thus, $K$ does not arise from the
construction in Theorem \ref{thm:higgsgivessd}.

Let $M$ be the rank-$1$ matroid on $x$.  Let $N$ be the truncation of
$U_{1,2}\oplus U_{1,2}\oplus U_{1,2}$ to rank $2$.  Let $K$ be the
free extension of $U_{1,2}\oplus U_{1,2}\oplus U_{1,2}$ by the element
$x$.  Note that $K$ is a semidirect sum of $M$ and $N$, but it is not
a matroid union of the form $M^+\join N_0$ since the only such matroid
union with three pairs of parallel elements is $M\oplus N$.  However,
if $M_q$ is the rank-$0$ matroid on $x$ and $N^l$ is $U_{1,2}\oplus
U_{1,2}\oplus U_{1,2}$, then $K$ is the first Higgs lift of $M_q\oplus
N$ toward $M\oplus N^l$.

\section{Principal sums and transversal matroids}\label{sec:transv}

Well-known results imply that if a semidirect sum of $M$ and $N$ is
representable over a field $\mathbb{F}$, then so are $M$ and $N$;
also, if $M$ and $N$ are $\mathbb{F}$-representable, then any
principal sum of $M$ and $N$ is representable over $\mathbb{F}$ or, if
$\mathbb{F}$ is finite, over a sufficiently large extension of
$\mathbb{F}$.  Theorem \ref{thm:trans} treats similar results for
transversal and fundamental transversal matroids.  Recall that a
\emph{fundamental} (or \emph{principal}) \emph{transversal matroid} is
a transversal matroid that has a presentation by a set system
$(D_1,D_2,\ldots,D_r)$ where each set $D_i$ has at least one element
that is in no set $D_j$ with $j\ne i$.

\begin{thm}\label{thm:trans}
  Let $P$ be the principal sum $(M,N;A,B)$.
  \begin{enumerate}
  \item If $A\in \mcZ(M)$ and both $M$ and $N$ are transversal, then
    $P$ is transversal.
  \item If $A\in\mcZ(M)$, $B\in \mcZ(N^*)$, and both $M$ and $N$ are
    fundamental transversal, then $P$ is fundamental transversal.
  \end{enumerate}
  Let $K$ be a semidirect sum $M^+\join N_0$ of $M$ and $N$ as in
  Theorem \ref{unionthm}.
  \begin{enumerate}
    \addtocounter{enumi}{+2}
  \item If $K$ is transversal, then $M$ and $N$ are transversal.
  \item If $K$ is fundamental transversal, then $M$ and $N$ are
    fundamental transversal.
  \end{enumerate}
\end{thm}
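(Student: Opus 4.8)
The plan is to prove the four parts essentially independently, using the structural results on principal sums established in Section \ref{sec:princsums} for parts (1) and (2), and the quotient machinery of Section \ref{sec:Union} for parts (3) and (4).

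For part (1), I would exhibit an explicit set-system presentation of $P = (M,N;A,B)$ from presentations of $M$ and $N$. Fix a presentation $(C_1,\dots,C_{r(M)})$ of $M$; since $A\in\mcZ(M)$, a standard fact about transversal presentations lets me arrange that exactly $r(M)-r_M(A)$ of these sets, say $C_1,\dots,C_{r(M)-r_M(A)}$, are disjoint from $A$ (these correspond to the coloops of $M|A$ being absent, i.e.\ $M|A$ having no coloops), while the remaining $r_M(A)$ sets meet $A$. Take a presentation $(E_1,\dots,E_{r(N)})$ of $N$. I would then propose the presentation of $P$ on $S\cup T$ consisting of: the $r(M)-r_M(A)$ sets $C_i$ (disjoint from $A$) unchanged; the $r_M(A)$ sets $C_i$ meeting $A$, each enlarged by adjoining all of $B$; and the $r(N)$ sets $E_j$ unchanged. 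The verification that the independent sets of this set system are exactly those described in Lemma \ref{lem:SDPindependent} is a matching/Hall-type argument: a partial transversal decomposes into the part using the $C_i$-sets (giving an independent set of $M^+(A,B)$, where the freely-added elements of $B$ can substitute only into the $r_M(A)$ enlarged sets) and the part using the $E_j$-sets (an independent set of $N$), matching the three-part decomposition $I\cup D\cup D'$ with $|D'|\le r_M(I\cup A)-|I|$.

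For part (2), the same presentation should work, but I must check it is \emph{fundamental}: each defining set needs a private element. The sets $C_i$ disjoint from $A$ keep their private elements from the fundamental presentation of $M$; the enlarged sets $C_i\cup B$ keep their private $M$-elements (which lie in $S-A$... here I need the private element of $C_i$ to not be in $A$, which is where $A\in\mcZ(M)$ should again be used carefully, possibly together with choosing the fundamental presentation so that private elements of sets meeting $A$ can be taken outside $A$); and for the $E_j$, I use $B\in\mcZ(N^*)$ to ensure the private elements of the fundamental presentation of $N$ can be chosen in $T-B$, so that they remain private even after the enlarged $C$-sets swallow $B$. Duality via Theorem \ref{thm:dual} may streamline the bookkeeping here. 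The main obstacle in parts (1)--(2) is precisely this placement of private elements relative to $A$ and $B$, and confirming that the cyclic-flat hypotheses $A\in\mcZ(M)$, $B\in\mcZ(N^*)$ are exactly what make it possible.

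For parts (3) and (4), I would use Lemma \ref{lem:quotient}: $M^+$ and $N_0$ are both quotients of $K = M^+\join N_0$. Restricting to $S$ and using Lemma \ref{lem:uniondel} (which gives $K|S = M$), the matroid $M^+|S = M$ is a quotient of $K|S$... that degenerates, so instead I should contract: $N_0$ is a quotient of $K$, and $N_0/S$... again I should restrict $N_0$ to get $N$ as $N_0|T$, noting $N_0|T=N$, and $N_0$ is a quotient of $K$, so $N = N_0|T = (N_0)|T$ is a quotient of $K|T$; but I want minors of $K$ that equal $M$ and $N$. The cleanest route: $M = K|S$, and $M^+$ is a quotient of $K$, so $M^+|S = M$ is a quotient of $K|S = M$, forcing $M$ to be both; more usefully, apply a known theorem that a \emph{minor} of a transversal matroid is transversal only after being careful — actually transversal matroids are \emph{not} minor-closed, so I must instead use that $M = K|S$ is a \emph{restriction}, and restrictions (deletions) of transversal matroids are transversal; and $N = K/S$ is a contraction. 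Contractions of transversal matroids need not be transversal in general, so here I would invoke the quotient structure: $N_0$ is a quotient of the transversal matroid $K$; I'd want a lemma that a quotient of a transversal matroid obtained by... Let me reconsider: the right tool is that $K/S = N$ and $K|S = M$, that deletions preserve transversality (giving $M$ transversal immediately), and for $N$, use Theorem \ref{wedgethm}'s dual perspective or the fact that $K/S$ is a \emph{contraction} — and for this I expect to need that $K$ transversal implies $K/S$ transversal when... In fact I believe the intended argument uses that $N$ is both a contraction $K/S$ and, via Lemma \ref{lem:quotient} and Lemma \ref{lem:uniondel}, expressible through the quotient $N_0$ of $K$; combined with a presentation of $K$ restricted appropriately, one extracts a presentation of $N$. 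The main obstacle is thus part (3)/(4) for $N$: showing the \emph{contraction} $K/S=N$ inherits (fundamental) transversality, which will rely on the special form $K=M^+\join N_0$ — concretely, taking a presentation of $K$ and showing its trace on $T$, after the elements of $S$ are contracted, yields a valid presentation of $N$ because the elements of $T$ sit in $K$ only through the "$N_0$ part."
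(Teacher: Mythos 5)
Part (1) of your plan is essentially sound: it amounts to an explicit-presentation verification of the two facts the paper simply cites (principal extension on a cyclic flat preserves transversality, and a matroid union of transversal matroids is transversal, presented by concatenating presentations). Part (2), however, has a genuine gap. Your key step is that, because $B\in\mcZ(N^*)$, the private elements of a fundamental presentation of $N$ can be chosen in $T-B$; this is false. Take $N=U_{1,2}\oplus U_{1,2}$ on $\{d,e,f,g\}$ with $B=\{d,e\}$; then $B\in\mcZ(N^*)$, but the only presentation of $N$ is $(\{d,e\},\{f,g\})$, so the private element of the first set must lie in $B$. With $M=U_{1,2}$ on $\{a,b\}$ and $A=\{a,b\}$, your proposed presentation of $P$ is $(\{a,b,d,e\},\{d,e\},\{f,g\})$, in which $\{d,e\}$ has no private element; $P$ is indeed fundamental transversal (e.g.\ via the presentation $(\{a,b,e\},\{d,e\},\{f,g\})$), but not by the presentation your construction produces, so checking fundamentality of that presentation cannot prove (2). (Your other worry, that private elements of the sets meeting $A$ might lie in $A$, is a non-issue: they only need to avoid the other sets.) The paper avoids presentations entirely here: it uses the Ingleton--Mason cyclic-flat characterization (Proposition \ref{prop:ftmchar}) together with Corollary \ref{cor:cycliccase} (which is exactly where $A\in\mcZ(M)$ and $B\in\mcZ(N^*)$ enter, giving $\mcZ(P)\subseteq\mcZ(M\oplus N)$) and Corollary \ref{cor:prsumtodsum}, transferring the equalities from the fundamental transversal matroid $M\oplus N$ to $P$.

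For parts (3) and (4) you have only the easy half: $M=K|S$ is transversal because restrictions of transversal matroids are transversal. For $N=K/S$ your sketch contains no argument, and its guiding claim that ``the elements of $T$ sit in $K$ only through the $N_0$ part'' is false: $M^+$ is an arbitrary rank-preserving extension of $M$ to $S\cup T$, so the elements of $T$ generally participate nontrivially in $M^+$ (see Figure \ref{whirl}), and the trace on $T$ of a presentation of $K$ is not a presentation of $N$ without substantial further work. Since contractions of transversal matroids by non-cyclic sets need not be transversal, the statement ``$K/S$ inherits (fundamental) transversality'' is precisely the open point. The paper's proof supplies the missing machinery: first reduce to the case that $M$ is free by contracting the largest cyclic flat $S_1$ of $M$ (contraction by a cyclic set preserves (fundamental) transversality, and by Lemma \ref{lem:uniondel}, $K/S_1=(M^+/S_1)\join (N_0/S_1)$ is again of the form in Theorem \ref{unionthm}); then, with $M$ free on $S$, attach to each cyclic set $Z$ of $N$ the set $S_Z\subseteq S$ built from the fundamental circuits $I(S,C)$ of Lemma \ref{lem:Ncircuits}, prove that $Z\cup S_Z\in\mcZ(K)$ for $Z\in\mcZ(N)$ along with the rank identities $r_K(Z\cup S_Z)=r_N(Z)+|S_Z|$ and the analogous formula for intersections, and finally transfer the inequality (for (3)) or equality (for (4)) of Proposition \ref{prop:ftmchar} from $K$ to $N$ by an inclusion--exclusion computation. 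None of these steps appears in your outline, so parts (3) and (4) are not established.
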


For the free product, Crapo and Schmitt \cite{uft} showed that
statement (1) holds without requiring $S$ to be cyclic, and
counterparts of the other three statements were proven in \cite{ftm}.
The dual of a transversal matroid need not be transversal, but, as Las
Vergnas \cite{mlv} proved, the dual of a fundamental transversal
matroid is fundamental transversal (see also \cite{ftm}).  A matroid
$K$ for which both $K$ and $K^*$ are transversal is sometimes called
\emph{bitransversal}.  It follows from Theorems \ref{thm:dual} and
\ref{thm:trans} that statement (1) holds if ``bitransversal'' replaces
``transversal'' and the hypothesis that $B$ is in $\mcZ(N^*)$ is
added; statement (3) holds for bitransversal matroids with no
additional hypotheses.

The truncation of $U_{1,2}\oplus U_{1,2}\oplus U_{1,2}$ to rank $2$ is
not transversal, yet it is a principal sum of any of its
single-element deletions and a loop, both of which are transversal, so
the hypothesis $A\in \mcZ(M)$ in statement (1) cannot be omitted.  The
principal sum in Figure \ref{prtrans} shows the necessity of the
hypothesis $B\in\mcZ(N^*)$ in statement (2): in that example, both $M$
and $N$ are fundamental transversal, but the principal sum is not
fundamental.  Let $K$ be the free extension of $U_{1,2}\oplus
U_{1,2}\oplus U_{1,2}$ by an element $x$.  Note that $K$ is a
semidirect sum of $K|x$ and $K/x$, and while $K$ is fundamental
transversal, $K/x$ is not transversal.  Thus, the hypothesis that $K$
has the form $M^+\join N_0$ is needed in statements (3) and (4).  This
example and the last paragraph of Section \ref{sec:othercon} also show
that statements (3) and (4) do not extend to the semidirect sums given
by the Higgs lift construction in Theorem \ref{thm:higgsgivessd}.
Along with the following remarks, this example also shows that these
results also do not extend to the construction in Theorem
\ref{wedgethm}: if $M$ is a free matroid, then the semidirect sums of
$M$ and $N$ are the coextensions of $N$ to $S\cup T$ that have rank
$r(N)+|S|$; furthermore, all such coextensions arise from the
construction in Theorem \ref{wedgethm}.

Statement (1) of Theorem \ref{thm:trans} is a consequence of the
following two well-known results: the class of transversal matroids is
closed under principal extensions on cyclic flats (hence, with the
assumptions in statement (1), $M^+(A,B)$ is transversal, as is $N_0$);
also, matroid unions of transversal matroids are transversal.

To prove statements (2)--(4), we will use the following result, the
first part of which is a refinement by Ingleton of a result by Mason
(see \cite{ing}); the second part is from \cite{ftm}, where proofs of
both assertions can be found.  We use $\cup \mcA$ and $\cap \mcA$ to
denote the union and intersection of a family $\mcA$ of sets.

\begin{prop}\label{prop:ftmchar}
  A matroid $K$ is transversal if and only if
  \begin{equation}
    r(\cap \mcA) \leq \sum_{\mcA'\subseteq \mcA}
    (-1)^{|\mcA'|+1}r(\cup \mcA')\label{mi} 
  \end{equation}
  for every nonempty subset $\mcA$ of $\mcZ(K)$.  Also, $K$ is a
  fundamental transversal matroid if and only if equality holds in
  inequality \emph{(\ref{mi})} for every nonempty subset $\mcA$ of
  $\mcZ(K)$.
\end{prop}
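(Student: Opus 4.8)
The plan is to prove both equivalences through the theory of set-system presentations, using the structure of \emph{maximal} presentations of transversal matroids. I take as known the following classical facts about a rank-$m$ transversal matroid $K$ on $E$ (due to Mason, and to Brualdi and Dinolt): $K$ has a presentation $(A_1,\ldots,A_m)$ in which each $A_i$ is the complement of a cyclic flat $Z_i$ and in which every cyclic flat $Y$ satisfies $r(Y)=|\{i:Y\not\subseteq Z_i\}|$, the number of the $A_i$ that $Y$ meets; and the rank formula $r(X)=\min_{P\subseteq\{1,\ldots,m\}}\bigl((m-|P|)+|X\cap\bigcup_{i\in P}A_i|\bigr)$, valid for any presentation and any $X\subseteq E$. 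I also use Brylawski's theorem (see \cite{Brylawski}) that a matroid is determined by its collection of cyclic flats together with their ranks.

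For the forward direction of the first equivalence, fix a maximal presentation of $K$ and a nonempty family $\mcA=\{Y_1,\ldots,Y_k\}$ of cyclic flats of $K$. For nonempty $J\subseteq\{1,\ldots,k\}$ the set $\bigcup_{j\in J}Y_j$ is cyclic, so its closure is a cyclic flat, and that closure lies in a set $Z_i$ precisely when $Y_j\subseteq Z_i$ for every $j\in J$ (each $Z_i$ being a flat); hence $r\bigl(\bigcup_{j\in J}Y_j\bigr)=m-|\{i:Y_j\subseteq Z_i\text{ for all }j\in J\}|$. Substituting this into the right side of (\ref{mi}), interchanging the two summations, and using $\sum_{\emptyset\ne J\subseteq\{1,\ldots,k\}}(-1)^{|J|+1}=1$, the alternating sum collapses to $m-|P|$, where $P=\{i:Y_j\subseteq Z_i\text{ for some }j\}$. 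For each $i\in P$ some $Y_j$ lies in $Z_i$, so $\bigcap\mcA\subseteq Z_i$ and $(\bigcap\mcA)\cap A_i=\emptyset$; evaluating the rank formula at the index set $P$ then yields $r(\bigcap\mcA)\le m-|P|$, which is exactly the right side of (\ref{mi}). In fact the gap between the two sides of (\ref{mi}) is $(m-|P|)-r(\bigcap\mcA)$.

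For the converse I would reconstruct $K$ from its cyclic flats and their ranks. Define integers $\mu(Z)$, for $Z\in\mcZ(K)$, by M\"obius inversion over the lattice of cyclic flats so that $\sum_{Z\supseteq Y}\mu(Z)=r(K)-r(Y)$ for every $Y\in\mcZ(K)$; the inequalities (\ref{mi}), applied to suitably chosen subfamilies of $\mcZ(K)$, are precisely what forces each $\mu(Z)$ to be nonnegative. Let $K'$ be the transversal matroid presented by $\mu(Z)$ copies of $E\setminus Z$ for each $Z\in\mcZ(K)$; using the rank formula for maximal presentations one checks that $K'$ has the same cyclic flats as $K$ with the same ranks, so $K'=K$ by Brylawski's theorem and $K$ is transversal. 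For the fundamental-transversal statement I would revisit the gap $(m-|P|)-r(\bigcap\mcA)$ from the forward computation: it vanishes for every family exactly when the reconstructed presentation can be taken with its sets pairwise distinct and each set containing an element lying in no other set, that is, when $K$ is fundamental transversal; the reverse implication is immediate, since for such a presentation the displayed inequality is an equality for every family.

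The step I expect to be the main obstacle is the converse of the first equivalence: both deriving the nonnegativity of the multiplicities $\mu(Z)$ from the highly redundant family of inequalities (\ref{mi}), and verifying that the reconstructed $K'$ has precisely the cyclic flats of $K$. Both are delicate because meets in the lattice of cyclic flats need not be set intersections, so the subfamilies of $\mcZ(K)$ used as tests must be chosen with care, and cyclic flats $Z$ with $\mu(Z)=0$ (which occur, for instance, when $K$ has coloops) require separate handling. The companion subtlety for the fundamental case is to pin down exactly when equality in (\ref{mi}) upgrades the reconstructed presentation to one with private elements.
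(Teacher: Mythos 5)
You should first note that the paper does not prove Proposition \ref{prop:ftmchar} at all: it is quoted as a known result, with the first equivalence attributed to Ingleton's refinement of Mason's theorem \cite{ing} and the second to \cite{ftm}. So there is no in-paper argument to match; what can be judged is whether your proposal would constitute a self-contained proof. Your forward direction of the first equivalence is correct and complete, and it is essentially Ingleton's argument: using the maximal presentation $(A_1,\dots,A_m)$ with $Z_i=E\setminus A_i\in\mcZ(K)$ and the fact that $r(Y)=|\{i:Y\cap A_i\ne\emptyset\}|$ for cyclic flats $Y$, the alternating sum does collapse to $m-|P|$, and the rank formula evaluated at $P$ bounds $r(\cap\mcA)$ by $m-|P|$. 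That half is fine, granting the two classical inputs you name.

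The rest is a program rather than a proof, and the gaps you flag yourself are exactly the substantive content of the theorem. For the converse of the first equivalence you must (a) show that nonnegativity of the M\"obius-inverted multiplicities $\mu(Z)$ over the lattice of cyclic flats follows from the inequalities (\ref{mi}) --- which is delicate precisely because $\cap\mcA$ is a set intersection and not the lattice meet, so the inclusion--exclusion sums do not directly compute $\mu$ --- and (b) prove that the transversal matroid $K'$ presented by $\mu(Z)$ copies of $E\setminus Z$ has the same cyclic flats and ranks as $K$; step (b) is the core of Mason's original proof and you give no argument for it. Neither is routine, and "applied to suitably chosen subfamilies" does not identify which subfamilies or why they suffice. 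The fundamental-transversal equivalence is in worse shape: you assert that the vanishing of the gap $(m-|P|)-r(\cap\mcA)$ for all families is equivalent to the reconstructed presentation admitting private elements, but no implication in either direction is argued (the "immediate" reverse implication still requires showing $r(\cap\mcA)=m-|P|$, i.e.\ that $\cap\mcA$ spans $E\setminus\bigcup_{i\in P}A_i$, which needs the distinguished basis of a fundamental transversal matroid and an inclusion--exclusion count of basis elements, not just the existence of private elements). As it stands, you have proved one of the four implications; the other three are sketched with acknowledged but unresolved obstacles, so the proposal does not yet establish the proposition.
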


We now prove statement (2) of Theorem \ref{thm:trans}.

\begin{proof}[Proof of part (2) of Theorem \ref{thm:trans}]
  It is well-known and easy to prove that direct sums of fundamental
  transversal matroids are fundamental transversal; also,
  $$\mcZ(M\oplus N) = \{Z_M\cup Z_N\,:\,Z_M\in\mcZ(M) \text{ and }
  Z_N\in\mcZ(N)\}.$$ Corollary \ref{cor:cycliccase} gives
  $\mcZ(P)\subseteq \mcZ(M\oplus N)$.  Since inequality (\ref{mi})
  holds with equality in $M\oplus N$, the inclusion $\mcZ(P)\subseteq
  \mcZ(M\oplus N)$ and Corollary \ref{cor:prsumtodsum} imply that
  equality holds in inequality (\ref{mi}) in $P$, so $P$ is
  fundamental transversal by Proposition \ref{prop:ftmchar}.
\end{proof}

The following well-known results, the first two of which are easy to
see, will be used in the proof of statements (3) and (4) of Theorem
\ref{thm:trans}.
\begin{enumerate}
\item[(i)] Let $x$ be a loop or a coloop of $K$.  The matroid $K$ is
  transversal if and only if $K\del x$ is transversal.  The same holds
  for fundamental transversal matroids.
\item[(ii)] Restrictions of transversal matroids are transversal.
\item[(iii)] Contractions of transversal matroids by cyclic sets are
  transversal.  The same holds for fundamental transversal matroids.
\end{enumerate}
With the essential assumption that the sets being contracted are
cyclic, it is not hard to give the required presentations of the
contractions in statement (iii); alternatively, these assertions can
be proven using Proposition \ref{prop:ftmchar} and the observation
that the closure of a cyclic set is a cyclic flat, and for $Z\in
\mcZ(K)$,
$$\mcZ(K/Z) = \{W-Z\,:\,W\in\mcZ(K) \text{ and } Z\subseteq W\}.$$

\begin{proof}[Proof of parts (3) and (4) of Theorem \ref{thm:trans}.]
  To prove part (3), assume that $K$ is transversal.  Statement (ii)
  above implies that $M$ is transversal since $M=K|S$.  To simplify
  the proof that $N$ is transversal, we first reduce it to the case in
  which $M$ is a free matroid.  Let $S_1$ be the largest cyclic flat
  of $M$, so $S_1$ consists of all elements of $S$ other than the
  coloops of $M$.  Since $K|S = M$, the set $S_1$ is cyclic in $K$, so
  $K/S_1$ is transversal by statement (iii) above.  Elements in $S$
  are loops in $N_0$, so Lemma \ref{lem:uniondel} gives
  $$K/S_1 = (M^+/S_1)\join (N_0/S_1),$$ which is also a matroid
  union of the type in Theorem \ref{unionthm}.  The set $S-S_1$ on
  which $M/S_1$ is defined is the set of coloops of $M$, so $M/S_1$ is
  a free matroid.  This justifies the reduction.

  Thus, let $M$ be the free matroid on $S$.  For a circuit $C$ of $N$,
  let $S_C$ be the subset $I(S,C)$ of the basis $S$ of $M$ given by
  equation (\ref{IBC}).  Part (2) of Lemma \ref{lem:Ncircuits} gives
  $C\cup S_C\in\mcC(K)$.  For a cyclic set $Z$ of $N$, let $S_Z$ be
  the union of all sets $S_C$ as $C$ ranges over the circuits of $N$
  that are contained in $Z$.  Thus, $S_Z$ is the minimum subset of
  $S$, relative to inclusion, with $Z\subseteq \cl_{M^+}(S_Z)$.  We
  will use the following results about $S_Z$, which we prove below:
  \begin{itemize}
  \item[(a)] if $Z'$ is also a cyclic set of $N$, then $S_{Z\cup Z'} =
    S_Z\cup S_{Z'}$,
  \item[(b)] $Z\cup S_Z$ is a cyclic set of $K$,
  \item[(c)] if $Z\in\mcZ(N)$, then $Z\cup S_Z\in\mcZ(K)$,
  \item[(d)] all elements of $S-S_Z$ are coloops of $K|(Z\cup S)$,
  \item[(e)] $r_K(Z\cup S_Z) = r_N(Z) +|S_Z|$, and
  \item[(f)\hspace{1pt}] if $Z_1,Z_2,\ldots, Z_t$ are
    cyclic sets of $N$, then
    $$r_K\bigl(\bigcap_{i=1}^t
    (Z_i\cup S_{Z_i})\bigr ) \bigr) = r_N(\bigcap_{i=1}^t Z_i)
    +\bigl|\bigcap_{i=1}^t S_{Z_i}\bigr|.$$
  \end{itemize}
  Property (a) holds by construction, as does property (b) since, as
  noted above, if $C\in\mcC(N)$, then $C\cup S_C\in\mcC(K)$.  For
  property (c), the set $Z\cup S$ is a flat of $N_0$ and
  $\cl_{M^+}(S_Z)$ is a flat of $M^+$ that contains $Z$ and intersects
  $S$ in $S_Z$, so, by Corollary \ref{cor:flats}, their intersection,
  which is $Z\cup S_Z$, is a flat of $K$; this observation and
  property (b) prove property (c).  It is not hard to see that all
  elements of $S-S_Z$ are coloops of $M^+|(Z\cup S)$, so $Z\cup S_Z$
  is the largest cyclic subset of $Z\cup S$ in $M^+$; by Corollary
  \ref{cor:cyclicsets}, it follows that $Z\cup S_Z$ is the largest
  cyclic subset of $Z\cup S$ in $K$, which proves property (d).  Since
  $N = K/S$ and $S$ is independent in $K$, we have $r_N(Z) = r_K(Z\cup
  S) -|S|$; the equality in property (e) follow from this equality and
  property (d).  The equality in part (f) follows similarly since
  elements in $S$ that are not in all $S_{Z_i}$ are coloops of the
  restriction of $K$ to $\bigcap_{i=1}^t (Z_i\cup S_{Z_i})$.

  To show that $N$ is transversal, let $\mcA$ be a nonempty collection
  of cyclic flats of $N$.  For $\mcA'\subseteq \mcA$, let $\mcA'_K =
  \{Z\cup S_Z\,:\,Z\in \mcA'\}$.  Property (c) gives $\mcA_K\subseteq
  \mcZ(K)$.  Since $K$ is transversal, we have, by Proposition
  \ref{prop:ftmchar},
  \begin{equation}\label{KnownMI}
    r_K(\cap \mcA_K) \leq \sum_{\mcA'_K\subseteq \mcA_K}
    (-1)^{|\mcA'_K|+1}r_K(\cup \mcA'_K).
  \end{equation}
  By properties (e) and (a), the right side of the equation can be
  rewritten as
  $$\sum_{\mcA'\subseteq \mcA}
  (-1)^{|\mcA'|+1}r_N(\cup \mcA') + \sum_{\mcA'\subseteq \mcA}
  (-1)^{|\mcA'|+1} \bigl|\bigcup_{Z\in\mcA'}S_Z\bigr|,$$ which, by
  inclusion/exclusion, is
  \begin{equation}\label{MIa}
    \sum_{\mcA'\subseteq \mcA} (-1)^{|\mcA'|+1}r_N(\cup \mcA') +
    \bigl|\bigcap_{Z\in\mcA}S_Z\bigr|.
  \end{equation}
  By property (f), the right side of inequality (\ref{KnownMI}) is
  equal to
  \begin{equation}\label{MIb}
    r_N(\cap \mcA) +   \bigl|\bigcap_{Z\in\mcA}S_Z\bigr|. 
  \end{equation}
  Thus, expression (\ref{MIb}) is no greater than expression
  (\ref{MIa}); canceling the common term gives inequality (\ref{mi})
  for the collection $\mcA$ of cyclic flats of $N$.  Thus, by
  Proposition \ref{prop:ftmchar}, $N$ is transversal.

  For part (4), note that since the class of fundamental transversal
  matroids is closed under duality, it suffices to show that if $K$ is
  fundamental transversal, then so is $N$.  The proof that $N$ is
  fundamental transversal follows from the same type of argument as
  above, but using the second part of Proposition \ref{prop:ftmchar}.
\end{proof}

\end{document}